\documentclass[10pt, a4paper]{amsart}
\usepackage[T1]{fontenc}
\usepackage{geometry}
\usepackage[dvipsnames]{xcolor}
\usepackage[hidelinks,colorlinks=true,citecolor=ForestGreen,linkcolor=RubineRed]{hyperref}
\usepackage[timezonesep={\ UTC}, showseconds=false]{datetime2}
\usepackage{bookmark}

\usepackage{amsmath}
\usepackage{amssymb}
\usepackage{amsfonts}
\usepackage{mathtools}
\usepackage{amsthm}
\usepackage{braket}
\usepackage{comment}
\usepackage{mathrsfs}
\usepackage{dsfont}
\usepackage{bbm}
\usepackage{stmaryrd}
\usepackage{tensor}
\usepackage{tikz-cd}
\usepackage{extarrows}
\usepackage{manfnt,bbold}

\DeclareFontFamily{U}{wncy}{}
\DeclareFontShape{U}{wncy}{m}{n}{<->wncyr10}{}
\DeclareSymbolFont{mcy}{U}{wncy}{m}{n}
\DeclareMathSymbol{\Sha}{\mathord}{mcy}{"58}

\newcommand{\f}{\mathbf{f}}
\newcommand{\Hom}{\mathrm{Hom}}
\newcommand{\cA}{\mathcal{A}}
\newcommand{\cF}{\mathcal{F}}
\newcommand{\cG}{\mathcal{G}}
\newcommand{\cN}{\mathcal{N}}
\newcommand{\cO}{\mathcal{O}}
\newcommand{\cP}{\mathcal{P}}
\newcommand{\cX}{\mathcal{X}}
\newcommand{\cL}{\mathcal{L}}

\newcommand{\cH}{\mathcal{H}}

\newcommand{\sW}{\mathscr{W}}

\newcommand{\QQ}{\mathbb{Q}}
\newcommand{\FF}{\mathbb{F}}
\newcommand{\ZZ}{\mathbb{Z}}
\newcommand{\Zp}{\mathbb{Z}_p}

\newcommand{\TT}{\mathbb{T}}
\newcommand{\Q}{\mathbb{Q}}
\newcommand{\Z}{\mathbb{Z}}

\newcommand{\fP}{\mathfrak{P}}

\newcommand{\GL}{\mathrm{GL}}
\newcommand{\ord}{\mathrm{ord}}
\newcommand{\Gal}{\mathrm{Gal}}
\newcommand{\Heeg}{\mathrm{Heeg}}

\newcommand{\Cor}{\mathrm{cor}}
\newcommand{\res}{\mathrm{res}}
\newcommand{\Sel}{\mathrm{Sel}}
\newcommand{\col}{\mathrm{Col}}
\newcommand{\tr}{\mathrm{tr}}
\newcommand{\Tr}{\mathrm{Tr}}
\newcommand{\tors}{\mathrm{tors}}
\renewcommand{\div}{\mathrm{div}}
\newcommand{\cotors}{\mathrm{cotors}}
\newcommand{\loc}{\mathrm{loc}}
\newcommand{\Char}{\mathrm{char}}
\newcommand{\im}{\mathrm{im}}
\newcommand{\rank}{\mathrm{rank}}
\newcommand{\corank}{\mathrm{corank}}
\newcommand{\coker}{\mathrm{coker}}
\newcommand{\ur}{\mathrm{ur}}

\newcommand{\PP}{\mathbb{P}}
\newcommand{\len}{\mathrm{len}}
\newcommand{\ind}{\mathrm{ind}}

\newcommand{\Aut}{\mathrm{Aut}}

\newcommand{\Dcris}{\mathbb{D}_\mathrm{cris}}
\newcommand{\Mlog}{M_{\log}}
\newcommand{\Fil}{\mathrm{Fil}}
\newcommand{\1}{\mathbb{1}}

\newcommand{\sL}{\mathscr{L}}
\newcommand{\sR}{\mathscr{R}}
\newcommand{\sT}{\mathscr{T}}
\newcommand{\sF}{\mathscr{F}}

\numberwithin{equation}{section}
\newtheorem{thm}{Theorem}[section]
\newtheorem{mainThm}{Theorem}

\newtheorem{prop}[thm]{Proposition}
\newtheorem{lem}[thm]{Lemma}
\newtheorem{cor}[thm]{Corollary}
\newtheorem{conj}[thm]{Conjecture}
\theoremstyle{remark}
\newtheorem{rem}[thm]{Remark}
\theoremstyle{definition}
\newtheorem{de}[thm]{Definition}

\usepackage{bbm}
\usepackage[bbgreekl]{mathbbol}
\DeclareMathSymbol\bka  \mathord{bbold}{"14}

\definecolor{ForestGreen}{RGB}{34,139,34}

\title{Kolyvagin's conjecture at non-ordinary primes}

\author[A.~Lei]{Antonio Lei}
\address[Lei]{Department of Mathematics and Statistics\\University of Ottawa\\
150 Louis-Pasteur Pvt\\
Ottawa, ON\\
Canada K1N 6N5}
\email{antonio.lei@uottawa.ca}

\author[L.~Zhao]{Luochen Zhao}
\address[Zhao]{Morningside Center of Mathematics\\No.55 Zhongguancun East Road\\Beijing\\100190\\China}
\email{luochenzhao@amss.ac.cn}

\keywords{Kolyvagin's conjecture, signed Iwasawa main conjectures, anticyclotomic extensions}
\subjclass[2020]{11R23, 11F11 (primary); 11R20 (secondary).}

\begin{document}

\begin{abstract}Let $K$ be an imaginary quadratic field and let $p \ge 5$ be a prime that is unramified in $K$. Let $\mathcal{A}_f/\mathbb{Q}$ be an abelian variety of $\mathrm{GL}_2$-type associated with a weight-two modular form $f$, with good non-ordinary reduction at $p$, and suppose that $(f,K)$ satisfies the generalized Heegner hypothesis. In the case where $p$ is inert in $K$, we further assume that $\mathcal{A}_f$ is an elliptic curve.
We develop an Euler-characteristic formula for signed Selmer groups over anticyclotomic $\mathbb{Z}_p$-extensions that applies when the corresponding Selmer modules have arbitrary $\Lambda$-rank. 
Assuming one inclusion in the signed Iwasawa main conjecture, we apply this formula to prove Kolyvagin's conjecture on the non-vanishing of the Kolyvagin system attached to Heegner points. 
Our results extend to the non-ordinary setting the results of Wei Zhang, Burungale–Castella–Grossi–Skinner, Castella--Sano and Kim in the ordinary case, and complement the works of Sweeting and Kim in the non-ordinary case under different hypotheses. 
We also study the effect of the exceptional zero phenomenon on the Iwasawa main conjecture in the inert case.
\end{abstract}

\maketitle


\section{Introduction}

Throughout this article, $K$ is an imaginary quadratic field of discriminant $D_K$ and  $p\ge5$ a prime unramified in $K$. 
We fix a weight two newform $f$ of level $N_0$ and trivial nebentypus, such that $p\nmid N_0$ and $(N_{0},D_{K})=1$. 
Let $F/\Q_p$ be the local Hecke field generated by the Fourier coefficients $\{a_n(f)\}_{n\ge 1}$ of $f$, $\cO$ be its ring of integers, and $\varpi\in \cO$ a uniformizer.
Let $\cA_f$ be an associated $\GL_2$-type abelian variety over $\QQ$ so that $\cO \cong {\rm End}(\cA_f)\otimes\Zp$ and
\[
\prod L(f^\sigma,s)=L(\cA_{f},s),
\]
where the product runs over the Galois conjugates of $f$.
Let $T$ denote the $\varpi$-adic Tate module of $\cA_{f,F}$, the base change of $\cA_f$ to $F$. 

In this article, we assume that $\cA_{f,F}$ has good non-ordinary reduction at $\varpi$.
We write $N_0=N^+N^-$, where $N^+$ (resp. $N^-$) is only divisible by primes which are split (resp. inert) in $K$. We assume throughout that  
\begin{align}\tag{Heeg.}\label{hyp:Heeg}
	N^- \text{ is a square-free product of an even number of primes.}
\end{align}
Furthermore, we assume that $\cA_f$ is an elliptic curve $E$ when $p$ is inert in $K$. This assumption allows us to work with the plus and minus Selmer groups defined in \cite{BBL1,BBL2}, which rely on the local points constructed in \cite{BKO1}.

The main goal of this article is to study Kolyvagin's conjecture on the non-vanishing of the eponymous system arising from Heegner points attached to $(\cA_f,K)$. Our method is inspired by the recent work of Burungale--Castella--Grossi--Skinner \cite{bcgs} on the validity of this conjecture for $p$-ordinary elliptic curves when $p$ is split in $K$. In this article, we suppose the form $f$ has good non-ordinary reduction at $p$, and consider both cases where $p$ is either split or inert in $K$. In particular, we show that under some hypotheses, Kolyvagin's conjecture follows from one inclusion in the anticyclotomic signed Iwasawa main conjecture, which we now briefly review.

Let $K_\infty$ denote the anticyclotomic $\Zp$-extension of $K$ and put $\Gamma=\Gal(K_{\infty}/K)$. For an integer $m\ge0$, write $K_m$ for the unique subextension of $K_{\infty}/K$ such that $[K_m:K]=p^m$. 
Put $G_m=\Gal(K_m/K)$.
Let $\Lambda$ denote the anticyclotomic Iwasawa algebra $\cO[[\Gamma]]=\displaystyle\varprojlim_m\cO[G_m]$, and put $\TT = T\otimes _{\cO}\Lambda^\iota$, where the action of $G_K$ is given by $G_K\to \Gamma$ composed with the inversion map. Throughout, we fix a topological generator $\gamma$ of $\Gamma$ and identify $\Lambda$ with the power series ring $\cO[[X]]$, by sending $\gamma$ to $1+X$. In the non-ordinary setting, generalizing works on the cyclotomic $\Zp$-extension of $\QQ$ \cite{kobayashi03,sprung,LLZ14}, one can construct signed Selmer groups $\Sel_{\sharp/\flat}(K,\TT)$ and $\Sel_{\sharp/\flat}(K_\infty,A)$ as in \cite{iovita-pollack,castella-wan24,BL-21,BBL1,BBL2,BLV} (their definitions are recalled in \S\ref{sec:IMC}). Under the generalized Heegner hypothesis \eqref{hyp:Heeg}, we are endowed with the usual system of Heegner classes $\kappa^\Heeg_n$, which can be used to construct two systems of signed Heegner classes
	\[
		\bka_1^{\sharp/\flat}\in H^1(K,\TT), \ \bka_n^{\sharp/\flat}\in H^1(K,\TT/I_n\TT)
	\]
	for $n$ a squarefree product of Kolyvagin primes, and $I_n\subset \Z_p$ the ideal generated by the set $\{\ell+1,a_\ell(f)\}_{\ell\mid n}$. The signed Iwasawa main conjecture \cite{BLV,BBL2} predicts that, for $\bullet\in \{\sharp,\flat\}$, both $\Sel_\bullet(K,\TT)$ and $\Sel_\bullet(K_\infty,A)^\vee$ are of $\Lambda$-rank 1. Furthermore, 
	\begin{align}\label{eq:MC-inclusion}
		\Char_
        \Lambda\left(\Sel_\bullet(K,\TT)/\Lambda\bka_1^\bullet\right)^2\subseteq \Char_\Lambda((\Sel_\bullet(K_\infty,A)^\vee)_\tors).
	\end{align}
Note that when $p$ is inert in $K$, $a_p = 0$, $\bullet = \sharp$, and that $f$ is attached to an elliptic curve, it was observed in \cite{BLV} that the right-hand side has an ``extra zero'' at the trivial character. Consequently, one does not expect equality to hold in this case. As a by-product of our proof of Kolyvagin's conjecture, we show that this expectation is indeed correct under mild hypotheses.

    A key difficulty in the non-ordinary setting is that the signed Selmer groups that arise naturally in the anticyclotomic setting are not, in general, cotorsion over the Iwasawa algebra. To overcome this difficulty, we develop an Euler-characteristic formula for Selmer modules of arbitrary $\Lambda$-rank. This formula allows us to relate the size of signed Tate--Shafarevich groups to specializations of the characteristic ideals appearing in the signed Iwasawa main conjecture.
    
	The main results of this article are summarized below.
\begin{mainThm}[Corollary~\ref{cor:rank-sel}, Theorem~\ref{thm:kolyvagin}]\label{thmA}
    Let $p\ge 5$ be a prime, and let $K$ be an imaginary quadratic field of discriminant $D_K$ prime to $p$. Let $f$ be a weight 2 newform of level $N_0$ prime to $pD_K$ and of trivial nebentypus. Let $\bullet\in \{\sharp,\flat\}$. Assume the generalized Heegner hypothesis \eqref{hyp:Heeg} and that $f$ is rational when $p$ is inert in $K$. Assume in addition the following conditions:
    \begin{enumerate}
        \item [i)] the signed $\Lambda$-adic Heegner class $\bka^\bullet_1\in H^1(K,\TT)$ is not $\Lambda$-torsion;
        \item [ii)] if $p$ is inert in $K$, then the residual $G_K$-representation $T/\varpi T$ is absolutely irreducible.
    \end{enumerate}
    Then, the following assertions hold.
    \begin{enumerate}
        \item[(A)] Both $\Sel_\bullet(K,\TT)$ and $\Sel_\bullet(K_\infty,A)^\vee$ are of $\Lambda$-rank 1.
        
        \item[(B)] If either $p$ is split in $K$ or $\bullet = \flat$, and the the reverse inclusion to \eqref{eq:MC-inclusion} holds in $\Lambda\otimes \Q_p$, then the Kolyvagin conjecture is true; i.e., there exists some $n$ for which $\kappa_n^\Heeg \ne 0$. 

        \item[(C)] In the exceptional case where $p$ is inert in $K$ and $\bullet = \sharp$, the reverse inclusion to \eqref{eq:MC-inclusion} in $\Lambda\otimes \Q_p$ fails.
    \end{enumerate}
\end{mainThm}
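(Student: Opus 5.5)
For (A), I would use the signed $\Lambda$-adic Heegner classes $\{\bka_n^\bullet\}$ as a $\Lambda$-adic Kolyvagin system for the signed Selmer structure. The first step is to check that $\bka^\bullet_n$ satisfies the signed local condition at the primes above $p$. This should follow from the way the signed classes are built: the Heegner classes $\kappa^\Heeg_n$ over the layers $K_m$ are decomposed using the plus/minus (or $\sharp/\flat$) logarithm matrices, and the resulting components lie in the kernel of the signed Coleman maps. Hypothesis (i) says $\bka_1^\bullet$ is non-torsion. Given that, Howard's $\Lambda$-adic Kolyvagin system argument, adapted to the signed Selmer structure as in \cite{BBL1,BBL2,BLV}, shows that $\Sel_\bullet(K,\TT)$ has $\Lambda$-rank 1 and that $\Sel_\bullet(K_\infty,A)^\vee$ has rank 1 as well. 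Hypothesis (ii), or the split-case analogue, supplies the big-image and residual irreducibility input needed for the Chebotarev arguments. The rank of the dual should also be recoverable from the rank of $\Sel_\bullet(K,\TT)$ by a Poitou--Tate global duality count: the signed local conditions at $\fP$ and $\bar\fP$ are each of rank one, and they are mutually orthogonal.

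For (B), I would argue by contradiction. Suppose $\kappa^\Heeg_n=0$ for all $n$. Then the mod-$\varpi^k$ Kolyvagin system attached to $(T,K)$ vanishes for every $k$. By the structure theory of Kolyvagin systems (Mazur--Rubin, Howard), the $\varpi$-adic Selmer rank over $K$ is then at least $3$, and so is the rank of the Selmer group at the trivial character of $\Gamma$. The key tool is the Euler-characteristic formula for signed Selmer groups of arbitrary $\Lambda$-rank, which I would develop in the body of the paper. It relates $\Char_\Lambda((\Sel_\bullet(K_\infty,A)^\vee)_\tors)$, evaluated at $X=0$ after dividing out the appropriate power of $X$, to the order of a signed Tate--Shafarevich group, local Tamagawa factors, and a $p$-adic height regulator on $\Sel_\bullet(K,T)$. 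Concretely, I would take the module $M=\Sel_\bullet(K_\infty,A)^\vee$ and the $\Lambda$-rank-one free quotient of $\Sel_\bullet(K,\TT)$. Using a control theorem for the signed Selmer groups, I would compare $\Gamma$-coinvariants and invariants, and identify $\mathrm{ord}_{X}$ of the characteristic ideal with the corank of $\Sel_\bullet(K,A)$ minus $1$, provided the cyclotomic-type derived height pairing is nondegenerate. I expect this nondegeneracy, or the handling of its kernel, to be the hardest part. Assuming the reverse inclusion to \eqref{eq:MC-inclusion} in $\Lambda\otimes\Q_p$, we get
\[
\mathrm{ord}_X \Char_\Lambda((\Sel_\bullet(K_\infty,A)^\vee)_\tors) \le 2\,\mathrm{ord}_X\Char_\Lambda(\Sel_\bullet(K,\TT)/\Lambda\bka_1^\bullet).
\]
The right-hand side measures how divisible the bottom Heegner class is by $X$. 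Using the Kolyvagin system bound on the $X$-adic valuation, this should force the Selmer rank over $K$ to be $1$, which contradicts the rank being at least $3$. In practice I would express everything in terms of lengths after specialising at height-one primes $X-\pi$ close to $X$, to avoid dealing with the kernel of the height pairing directly. A Howard-style argument then shows that the vanishing of all $\kappa_n$ makes the specialized quantity too divisible, contradicting the main conjecture inclusion.

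For (C), I would use the extra zero observed in \cite{BLV}. When $p$ is inert and $\bullet=\sharp$, the local $\sharp$ condition at $X=0$ degenerates: the $\sharp$-Coleman map acquires a factor of $X$. As a result, the Euler-characteristic formula gives $\mathrm{ord}_X$ of the right-hand side at least one more than the algebraic prediction. Meanwhile, the left-hand side is controlled by $\bka_1^\sharp$, whose $X$-adic order is governed by the Heegner Kolyvagin system. Comparing the two orders shows that the right-hand side has strictly larger $X$-adic order than the left. Hence the reverse inclusion cannot hold even after inverting $p$.
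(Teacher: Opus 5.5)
Part (A) is fine in substance. The paper applies Howard's bound to the specialized systems $\{\bka_n^\bullet(\alpha)\}$ for the cofinitely many $\alpha$ with $\bka_1^\bullet(\alpha)\neq0$, then reads $\Lambda$-ranks off $\cO$-ranks, which is what a $\Lambda$-adic Kolyvagin-system argument does internally. Parts (B) and (C), however, do not work as written.

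First, your displayed inequality points the wrong way. Write $\cX=(\Sel_\bullet(K_\infty,A)^\vee)_\tors$. The reverse inclusion $\Char(\cX)\subseteq\Char(\Sel_\bullet(K,\TT)/\Lambda\bka_1^\bullet)^2$ says that $\Char(\Sel_\bullet(K,\TT)/\Lambda\bka_1^\bullet)^2$ divides $\Char(\cX)$, i.e.\ $2\,\ord_X\Char(\Sel_\bullet(K,\TT)/\Lambda\bka_1^\bullet)\le\ord_X\Char(\cX)$. What you wrote is \eqref{eq:MC-inclusion} itself, i.e.\ Howard's divisibility. It bounds the Selmer side by the Heegner side, and vanishing of the $\kappa_n^\Heeg$ only makes the Heegner side more divisible, so no contradiction can come from it.

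Second, the claim that vanishing of all $\kappa_n^\Heeg$ forces Selmer rank at least $3$ does not follow from the structure theory. Kolyvagin, Mazur--Rubin and Howard extract Selmer information only from a \emph{nonzero} system, and this implication is essentially the $p$-converse you are trying to prove.

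Third, working at $X=0$ requires nondegeneracy of anticyclotomic $p$-adic heights, which is not available and which you leave open.

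The missing idea, which your last two sentences of (B) only gesture at, is how vanishing of the \emph{whole} system becomes extra divisibility at characters near $\1$. In the paper, Lemma~\ref{lem:cong-class-alpha} turns $\kappa_n^\Heeg=0$ into $\bka_n^\bullet(\1)=0$. Lemma~\ref{lem:index} then gives $\bka_n^\bullet(\alpha)\in\varpi^mH^1(K,T_\alpha/I_n)$ for every $n$ and every $\alpha\equiv1\bmod\varpi^m$. It uses that $H^1(K[n],\TT/I_n)$ has no $(\gamma-1)$-torsion, so the $(\gamma-1)$-divisibility can be taken $\cG(n)$-invariantly and descended to $K$. Dividing the entire twisted system by $\varpi^m$ and applying Theorem~\ref{thm:howard} gives $|\Sha_\bullet(K,A_\alpha)|\le|\cO/\varpi|^{2\ind(\bka_1^\bullet(\alpha))-2m}$. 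On the other side, Theorem~\ref{thm:euler-char}, the compact control results and the reverse inclusion give $|\cO/\varpi|^{2\ind(\bka_1^\bullet(\alpha))}\le C\,|\Sha_\bullet(K,A_\alpha)|$ with $C$ independent of $\alpha$. That theorem holds for arbitrary $\Lambda$-corank and is used only at nontrivial $\alpha$, so no heights enter. Letting $m\to\infty$ gives the contradiction. Divisibility of $\bka_1^\bullet$ alone would not suffice; the whole system must be divided.

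Your (C) has the same sign error. You place the extra zero on the Selmer side and conclude that $\Char(\cX)$ has larger $X$-order than $\Char(\Sel_\sharp(K,\TT)/\Lambda\bka_1^\sharp)^2$. That is compatible with the reverse inclusion; it would contradict \eqref{eq:MC-inclusion} instead. The extra zero is on the Heegner side: $\gamma_0=a_p=0$ gives $\bka_n^\sharp(\1)=0$ for every $n$ unconditionally. The argument above then shows the reverse inclusion must fail.
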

\begin{rem}
    Under certain technical assumptions, hypothesis i) is known by \cite{cornut,vatsal,cornut-vatsal}; see also \cite[Corollary 6.4]{castella-wan24} when $p$ is split in $K$; hypothesis ii) is known when the attached elliptic curve is semi-stable \cite[Proposition 2.1]{edixhoven97}, as $p$ is a supersingular prime; see Lemma \ref{lem:central} and Remark \ref{rem:irreducibility}.
\end{rem}

\begin{rem}\label{rem:reverse-IMC}
    We shall refer to the case where either $p$ is split in $K$ or $\bullet = \flat$ as the \textit{non-exceptional case}.
The validity of the signed Iwasawa main conjecture in such case, and therefore the reverse inclusion to \eqref{eq:MC-inclusion}, has been established in \cite{castella-wan24,BLV,BBL2} if the following conditions hold:

\vspace{0.2cm}

\noindent\textbf{(ram)}\; If $\ell \mid N_0$,  then the residual representation $\overline{\rho}_f$  is ramified at $\ell$ in the following cases:
\begin{itemize}
    \item[-] $\ell \mid N^{+}$,
    \item[-] $\ell \mid N^{-} \text{ and } \ell^2 \equiv 1 \pmod{p}$.
\end{itemize}

\vspace{0.2cm}
\noindent\textbf{(Im)}\; If $p$ is inert in $K$, then $\cA_f$ is semi-stable; if $p$ is split in $K$, then the image of the residual representation $\overline{\rho}_f$ contains the set of matrices of $\GL_2(\cO/\varpi\cO)$ whose determinants are in $\FF_p^\times$.

\vspace{0.2cm}

\noindent\textbf{(iso)}\; If $a_p(f) \ne 0$, then $f$ is $p$-isolated.

\vspace{0.2cm}

\noindent In particular, if these hypotheses hold, Theorem~\ref{thmA} implies the validity of Kolyvagin's conjecture for modular forms under mild hypotheses. Moreover, if $p$ is split in $K$ and $f$ is attached to a semi-stable elliptic curve over $\Q$, then under some restrictions on $N$, the rational signed main conjectures hold by \cite[Theorem C]{castella-wan24}, so the Kolyvagin conjecture in this case is valid with fewer restrictions.
\end{rem}

Write
\[
    \ord(\{\kappa^\Heeg_n\}_n) = \min\{r:\text{there exists }n\text{ such that }\kappa_n^\Heeg\ne0\text{ with }\nu(n) = r\}.
\]
Suppose the form $f$ is rational and attached to an elliptic curve $E/\Q$. Denote by $\Sel_{p^\infty}(E/K)$ the discrete $p$-primary Selmer group of $E/K$, and set $r(E/K)^\pm = \corank_{\Z_p}(\Sel_{p^\infty}(E/K)^\pm)$, where the superscript signifies the eigenspace of $\Sel_{p^\infty}(E/K)$ under the action of the non-trivial element of $\Gal(K/\Q)$. The standard argument of \cite[Theorem 4]{kolyvagin91} gives the following:
\begin{cor}
    Suppose that the hypotheses in Theorem \ref{thmA} hold, that we are in the non-exceptional case, and that $f$ is attached to an elliptic curve $E/\Q$. Assume that the reverse inclusion of \eqref{eq:MC-inclusion} holds in $\Lambda\otimes \Q_p$ (e.g., when conditions in Remark \ref{rem:reverse-IMC} are met). Then 
    \[
        \ord(\{\kappa^\Heeg_n\}_n) = \max\{r(E/K)^+, r(E/K)^-\} - 1.
    \]
    In particular, we have the following $p$-converse of Gross--Zagier and Kolyvagin for the triple $(E,p,K)$ (\textit{cf.}, \cite[Theorem 6.10]{castella-wan24}, \cite[Corollary 1.5]{BBL2}):
    \[
        \corank_{\Z_p}(\Sel_{p^\infty}(E/K)) = 1 \Longrightarrow \ord_{s=1}L(s,E/K) = 1.
    \]
\end{cor}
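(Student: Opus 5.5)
The plan is to combine Theorem~\ref{thmA}(B) with Kolyvagin's structure theorem for Selmer groups \cite[Theorem 4]{kolyvagin91}, in the form refined by W.~Zhang and \cite{bcgs}. Theorem~\ref{thmA}(B) applies because we are in the non-exceptional case and the reverse inclusion of \eqref{eq:MC-inclusion} is assumed. It gives that the Heegner Kolyvagin system $\{\kappa^\Heeg_n\}_n$ is nonzero, so $\nu_\infty:=\ord(\{\kappa^\Heeg_n\}_n)$ is finite.

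Kolyvagin's theorem is proved for $p$ ordinary or supersingular alike, once the system is known to be nontrivial and the residual representation has large image. It describes the structure of $\Sel_{p^\infty}(E/K)$, and of its $\pm$-eigenspaces under complex conjugation, in terms of the orders of the classes $\kappa_n$. Its argument uses only the Kolyvagin system relations and Chebotarev arguments with Kolyvagin primes. From it we extract two facts:
\begin{enumerate}
\item[(a)] Let $\epsilon$ be the sign of the classes $\kappa_n$ with $\nu(n)=\nu_\infty$, that is, $\epsilon=-\epsilon_E(-1)^{\nu_\infty+1}$ in the usual normalization. Then $r(E/K)^{\epsilon}=\nu_\infty+1$.
\item[(b)] $r(E/K)^{-\epsilon}\le \nu_\infty$, and the parity of $r(E/K)^{-\epsilon}$ is opposite to that of $\nu_\infty+1$.
\end{enumerate}
Together these give $\max\{r(E/K)^+,r(E/K)^-\}=\nu_\infty+1$, which is the displayed formula.

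The hypotheses of \cite{kolyvagin91} need a check in our setting. Irreducibility of $E[p]$ follows from supersingularity, since $p\ge5$ and the image is large enough by Remark~\ref{rem:irreducibility}, Lemma~\ref{lem:central} and hypothesis ii). Kolyvagin primes exist in abundance by Chebotarev. The only input from the rest of the paper is the non-vanishing statement of Theorem~\ref{thmA}(B), so the signed theory enters only there. The main obstacle is to make sure the non-ordinarity at $p$ does not affect the local conditions at $p$ used in Kolyvagin's Selmer comparison. These are Bloch--Kato finite conditions, and the classes $\kappa_n$ satisfy them because they come from Heegner points, so the usual argument goes through verbatim. I would cite the formulation in \cite{bcgs} or \cite{zhang14}, which treats general $p$ under the residual irreducibility hypothesis.

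For the $p$-converse, suppose $\corank_{\Z_p}\Sel_{p^\infty}(E/K)=1$. Then $\max\{r^+,r^-\}=1$, so $\nu_\infty=0$, and therefore $\kappa_1^\Heeg\neq0$. The class $\kappa_1^\Heeg$ is the Kummer image of the Heegner point $P_K$, so $P_K$ is non-torsion in $E(K)$. The Gross--Zagier formula (or Yuan--Zhang--Zhang for the generalized Heegner hypothesis \eqref{hyp:Heeg}) then gives $L'(1,E/K)\neq0$. The sign of the functional equation is $-1$ under \eqref{hyp:Heeg}, so $L(1,E/K)=0$, and hence $\ord_{s=1}L(s,E/K)=1$.
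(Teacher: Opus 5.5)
Your proposal is correct and follows the paper's route. The paper likewise combines the non-vanishing from Theorem~\ref{thmA}(B) with Kolyvagin's structure theorem \cite[Theorem 4]{kolyvagin91} to obtain $\ord(\{\kappa^\Heeg_n\}_n)=\max\{r(E/K)^+,r(E/K)^-\}-1$, and the $p$-converse then follows from $\kappa_1^\Heeg\neq 0$ via Gross--Zagier (Yuan--Zhang--Zhang under \eqref{hyp:Heeg}), exactly as you argue. One small detail to add: passing from $\kappa_1^\Heeg\neq0$ to $P_K$ being non-torsion uses $E(K)[p]=0$, which follows from Lemma~\ref{lem:T-no-invariant}.
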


\subsection{Strategy}
Our approach follows a strategy similar to that of \cite{bcgs}. The assumption on the signed Heegner class allows us to choose abundant characters $\alpha$ of $\Gamma$ such that the specialization $\bka_1^\bullet(\alpha)$ is non-zero. We describe the size of a certain signed Tate--Shafarevich group in terms of the evaluation of a characteristic element of $(\Sel_\bullet(K_\infty,A)^\vee)_\tors$ at $\alpha$ and some explicit local terms, in close analogy to the ``anticyclotomic control theorem'' in \cite[Theorem 1.2.7]{bcgs} that originate from \cite{greenberg-cetraro,jetchev-skinner-wan:BSD-analytic-rank-1}.

Our main point of departure from \cite{bcgs} is that we work directly with signed Selmer groups instead of the relaxed-strict one and the corresponding BDP $p$-adic $L$-function. These objects are not available when $p$ is inert in $K$. As a consequence, the above control theorem requires a more delicate analysis, since, unlike their BDP counterparts, the signed Selmer groups are not cotorsion over the Iwasawa algebra. In particular,  the standard result of Greenberg \cite[\S4]{greenberg-cetraro} becomes inaccessible here. We overcome this difficulty by developing an Euler-characteristic formula that works with Selmer modules of arbitrary $\Lambda$-ranks; see Theorem~\ref{thm:euler-char}. It may be of independent interest since formulas of this kind are usually proved only for cotorsion Selmer groups in the literature. Another contribution of this article is Lemma \ref{lem:index}, which establishes a divisibility property of the Kolyvagin system that is used to bound the Shafarevich--Tate groups, and does not appear in \cite{bcgs}. To the best of our knowledge, this lemma appears indispensable for our approach to Kolyvagin's conjecture. 

In another direction, the irreducibility of the residual representation allows us to apply the general machinery of Kolyvagin systems developed by Howard \cite{howard04,howard04-gl2} to obtain an upper bound on the size of the signed Tate--Shafarevich group (see Theorem~\ref{thm:howard}). Combining this with the information extracted from the signed Iwasawa main conjecture via the Euler-characteristic formula, we deduce Kolyvagin's conjecture.

As discussed in \cite[Remark~1.3.2]{bcgs}, some of these ideas are also present in the work of Loeffler--Zerbes \cite{LZ20}, where the Iwasawa theory of Hilbert modular forms was studied. It would be interesting to see whether the results presented in this article can be extended to that setting.

\subsection{Related work}
Since the seminal work of Zhang \cite{zhang14}, which established Kolyvagin’s conjecture in the case of $p$-ordinary elliptic curves, numerous generalizations have been obtained, extending and refining Zhang's results in a variety of settings. In \cite{sweeting}, Sweeting vastly generalized Zhang's result, removing several hypotheses. In particular, the case where $p$ is a non-ordinary prime is included. 
Other results on Kolyvagin's conjecture were also obtained by Kim \cite{kim24,kim26} under various hypotheses.
Using Hida theory, Kolyvagin's conjecture for higher weight $p$-ordinary modular forms has been studied in \cite{LPV}. In \cite{DaRonche}, Da Ronche obtained similar results for modular forms at non-ordinary primes using signed Selmer groups defined using Wach modules. These works impose specific hypotheses on the residual Galois representation associated with the modular form, typically requiring that it be either surjective or irreducible. This assumption is weakened in \cite{bcgs} in the $p$-ordinary case, which also proved the refined Kolyvagin's conjecture.
More recently, the refined Kolyvagin's conjecture in the $p$-ordinary case is further studied in \cite{CastellaSano} using Selmer complexes.
The aforementioned works, with the exception of \cite{sweeting}, assume that the prime $p$ splits in the imaginary quadratic field $K$ or $p$ is an ordinary prime. One of the novel features of the method developed in this current work is that $p$ can be either split or inert in $K$. Our results also complement those proven in \cite{sweeting} since we only impose hypotheses on the Iwasawa main conjectures related to $f$, whereas the work of Sweeting assumes the validity of the $p$-adic Birch and Swinnerton-Dyer conjecture for modular forms congruent to $f$.

\subsection*{Acknowledgement}
The authors thank Francesc Castella, Giada Grossi, Chan-Ho Kim, Paul-Antoine Seitz and Xin Wan for helpful discussions during the preparation of the article. We are indebted to Giada Grossi for clarifying a question regarding \cite{bcgs}. AL’s research was supported by the NSERC Discovery Grant RGPIN-2026-04351. Portions of this work were completed during LZ’s visit to the University of Ottawa in Fall 2025, funded by the Visiting Researchers Program – CAS and C9 League – from the Office of International Research and Experiential Learning.

\section{Anticyclotomic Iwasawa main conjectures}\label{sec:IMC}

\subsection{Signed Heegner classes}
\label{subsec:signed-heegner}
We recall the definition of the Kolyvagin classes arising from Heegner points and discuss the construction of signed versions of these classes over anticyclotomic towers.

Let $\cL_0$ be the set of rational primes $\ell$ such that $\ell$ is inert in $K$ and $\ell\nmid N_0p$. We say that $\ell\in\cL_0$ is a \textit{Kolyvagin prime} if 
$$
    M(\ell):=\min\{\ord_\varpi(\ell+1),\ord_\varpi(a_\ell)\}>0.
$$
The set of such primes is denoted by $\cL$.

Let $\cN$ be the set of square-free products of Kolyvagin primes, and for $n\in\cN$ set $M(n):=\min\{M(\ell):\ell|n\}$ if $n\ne 1$, and $M(1) = \infty$. Let $I_n=\varpi^{M(n)}\cO$. For $m\in \Z_{\ge 1}$, denote by $\cN^{(m)} = \{n\in \cN\colon M(n)\ge m\}$ and $\cN_m = \{n\in \cN\colon \nu(n) = m\}$, where $\nu(n)$ stands for the number of prime divisors of $n$.

Throughout this section, we fix an element $n\in\cN$. For an integer $k\ge0$, let $P[np^k]\in \cA_f(K[np^k])$ be the Heegner point on $\cA_f$  of conductor $np^k$ as constructed in \cite{zhang1,zhang2}; see also \cite[\S1]{howard04-gl2}. We shall regard $P[np^k]$ as an element of $H^1(K[np^k],T)$ through the Kummer map. We recall the norm relations satisfied by these elements. 
If $p$ splits in $K$, let $\sigma$ and $\sigma^*$ denote the Frobenius maps in $\cG(n) = \Gal(K[n]/K)$ of the two primes above $p$. Set $\delta$ as the order of $(\cO_K/p\cO_K)^\times/(\ZZ/p\ZZ)^\times$ and $u_K = (\#\cO_K^\times)/2$. Let $\gamma_0=a_p$ if $p$ is inert and $\gamma_0=a_p-\sigma-\sigma^*$ if $p$ splits. Let $\gamma_1=u_K^{-1} a_p\gamma_0-\delta$. For a prime $\ell\in\cN$ that is coprime to $n$, we have
\begin{equation}
\Tr_{K[\ell np^k]/K[np^k]} P[\ell np^k]=a_\ell P[np^k]. \label{eq:norm-l}
\end{equation}
Furthermore,
\begin{align}
u_K\cdot \Tr_{K[ np]/K[n]} P[np]&=\gamma_0 P[n] ,\label{eq:norm np-n}\\
\Tr_{K[ np^{k+1}]/K[np^k]} P[np^{k+1}]&=a_p P[np^k]-P[np^{k-1}], \quad k\ge1.\label{eq:norm npk}
\end{align}
Combining \eqref{eq:norm np-n} and \eqref{eq:norm npk} (with $k=1$) gives
\begin{equation}
  \Tr_{K[ np^2]/K[n]} P[np^2]=\gamma_1 P[n].\label{eq:norm-np2}
\end{equation}

\begin{de}
For an integer $k\ge1$, let $\cG_k=\Gal(K[np^{k+1}]/K[np])\cong \ZZ/p^{k}\ZZ$ (we have suppressed the dependency on $n$ for simplicity). Let $\tilde\Lambda_{k}=\cO[\cG_k]$ and $\tilde\Lambda=\varprojlim\tilde\Lambda_k$, which we identify with $\cO[[\Gamma^{(np)}]]\cong\cO[[X]]$, where $\Gamma^{(np)}=\Gal(K[np^\infty]/K[np])$. 
 We write $C_{k}$ for the matrix $\begin{bmatrix}a_p&1\\-\Phi_{k}&0\end{bmatrix}$, where $\Phi_k\in\tilde\Lambda$ is the $p^k$-th cyclotomic polynomial corresponding to $\frac{(1+X)^{p^k}-1}{(1+X)^{p^{k-1}}-1}$ under the above identification.
We write $H_{k}$ for the $\tilde\Lambda$-morphism
\begin{align*}
\tilde\Lambda_k^2&\longrightarrow\tilde\Lambda_k^2\\
\begin{bmatrix}
x\\ y
\end{bmatrix}&\longmapsto C_{k}\cdots C_{1}\begin{bmatrix}
x\\ y
\end{bmatrix}
,
\end{align*}
which we identify with the $2\times 2$ matrix $C_k\cdots C_1$ valued in $\tilde\Lambda_k$. When $k=0$, we set $H_0$ to be the identity map.
\end{de}

We recall that there is an isomorphism $\tilde\Lambda^2\cong \varprojlim \tilde\Lambda_k^2/\ker H_k$, as explained in \cite[(3.3)]{BBL1}.

\begin{thm}
\label{thm_main_sharpflat_Heegner_ES}
Let $n\in\cN$. For any positive integer $k\ge1$, we have a unique pair of cohomology classes 
$$\begin{bmatrix}
P[np^{k+1}]^\sharp\\ P[np^{k+1}]^\flat 
\end{bmatrix}\,\,\in\,\, H^1(K[np^{k+1}],T)^{\oplus 2}/\ker(H_{k})\cdot H^1(K[np^{k+1}],T)^{\oplus 2}$$ 
satisfying the following properties:
\item[i)] We have
$$
    H_{k}
    \begin{bmatrix}
    P[np^{k+1}]^\sharp \\ P[np^{k+1}]^\flat
    \end{bmatrix}= \begin{bmatrix}
    P[np^{k+1}]\\ -\res_{K[np^{k+1}]/K[np^k]}\left( P[np^k]\right)
    \end{bmatrix},
$$
where the equality takes place in $H^1(K[np^{k+1}],T)^{\oplus 2}$.

\item[ii)] We have the containment
$${\Cor}_{K[np^{k+2}]/K[np^{k+1}]}\,\begin{bmatrix}
P[np^{k+2}]^\sharp \\ P[np^{k+2}]^\flat
\end{bmatrix} -\begin{bmatrix}
P[np^{k+1}]^\sharp \\ P[np^{k+1}]^\flat
\end{bmatrix}\,\, \in \,\, \ker(H_{k}) \cdot H^1(K[np^{k+1}],T)^{\oplus 2}\,.$$
\end{thm}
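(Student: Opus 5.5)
The plan is to reproduce the factorisation argument of \cite{BBL1}, which builds on Kobayashi and Sprung. The statement has three parts: uniqueness, the defining property i), and the compatibility ii).

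\textbf{Uniqueness.} This is formal. If two pairs satisfy i), their difference lies in $\ker(H_k)$ applied to $H^1(K[np^{k+1}],T)^{\oplus2}$, and that is exactly what we quotient by. So the content is existence in i), together with ii).

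\textbf{Standing input.} Throughout I would use that $H^1(K[np^{k+1}],T)$ is $\cO$-torsion free and satisfies $\res\circ\Cor = $ the norm element on it. Both follow from the vanishing of $\cA_f(K[np^\infty])[\varpi]$, which holds in the supersingular setting with $p\ge5$.

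\textbf{Existence in i).} The key is a criterion for lying in the image of $H_k$. Write $\omega_j=(1+X)^{p^j}-1$, and note $\det C_j=\Phi_j$. Working with the $\tilde\Lambda_k$-module $M=H^1(K[np^{k+1}],T)$, I would prove the following, in the style of Sprung's and LLZ's lemmas. A pair $(x,y)\in M^{\oplus2}$ lies in $H_k(M^{\oplus2})$ provided the ``tower'' attached to it satisfies the trace relations $\Tr_{j+1/j}(z_{j+1})=a_pz_j-\res(z_{j-1})$ for all $j\le k$. Here $z_{k+1}=x$ and $z_k=-y$ viewed at level $k$, and the $z_j$ are the elements at lower levels.

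I would prove this by induction on $k$. Peeling off $C_k$ amounts to solving
\[
C_k\begin{bmatrix}u\\ v\end{bmatrix}=\begin{bmatrix}x\\ y\end{bmatrix},
\qquad\text{that is,}\qquad v=x-a_pu,\quad \Phi_k u=-y .
\]
So one must divide $-y$ by $\Phi_k$ in $M$. Divisibility by $\Phi_k$ is detected by the vanishing of the trace to the level $k-1$. This vanishing is exactly what the norm relation \eqref{eq:norm npk} gives: taking $x=P[np^{k+1}]$ and $y=-\res P[np^k]$, we have
\[
\Tr_{k+1/k}\bigl(P[np^{k+1}]\bigr)-a_pP[np^k]+\res\bigl(P[np^{k-1}]\bigr)=0 .
\]
Lower levels then continue the induction. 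At the bottom level, \eqref{eq:norm np-n} and \eqref{eq:norm-np2} enter to handle $k=1$.

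\textbf{Compatibility ii).} Let $v$ be the pair at level $k+2$, so that $C_{k+1}H_kv=\bigl(P[np^{k+2}],\,-\res P[np^{k+1}]\bigr)$. I would apply $\Cor_{K[np^{k+2}]/K[np^{k+1}]}$ to this identity.
\begin{itemize}
\item Corestriction is $\tilde\Lambda$-linear, and $\Phi_{k+1}\equiv p$ modulo $\omega_k$. Hence $\Cor\circ C_{k+1}=C'\circ\Cor$ with $C'=\begin{bmatrix}a_p&1\\-p&0\end{bmatrix}$.
\item On the right-hand side, \eqref{eq:norm npk} and $\Cor\circ\res=p$ give $\bigl(a_pP[np^{k+1}]-\res P[np^k],\,-pP[np^{k+1}]\bigr)$.
\item This equals $C'\bigl(P[np^{k+1}],\,-\res P[np^k]\bigr)$.
\end{itemize}
Consequently $C'\bigl(H_k\Cor v-H_kv'\bigr)=0$, where $v'$ is the level-$(k+1)$ pair. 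Now $C'(a,b)=0$ forces $pa=0$ and $a_pa+b=0$. Since $M$ is torsion free, $a=b=0$, so $H_k(\Cor v-v')=0$. This is ii).

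\textbf{Main obstacle.} I expect the main difficulty to be the image criterion for $H_k$ on cohomology rather than on $\tilde\Lambda_k^2$. Dividing by $\Phi_k$ inside $M$ requires that $M$ behave like a free $\tilde\Lambda_k$-module: $\Phi_k$-torsion should equal the kernel of the trace, and trace-zero elements should be $\Phi_k$-divisible. I would try first to reduce to freeness by controlling $H^1$ via the vanishing of invariants and Shapiro's lemma. Failing that, I would quote the purely algebraic statement of \cite[(3.3)]{BBL1} applied to the inverse system.
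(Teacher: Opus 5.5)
The existence part of i) is the substance of the theorem, and it is not proved. You identify the obstacle yourself, but neither of your suggested remedies removes it.

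\textbf{Why existence is not formal.} Put $M=H^1(K[np^2],T)$. On $M$, $\Phi_1$ acts as $\res\circ\Cor$, and restriction from $K[np]$ is injective. So for $k=1$ the second row of i) says exactly that $P[np]\in\Cor_{K[np^2]/K[np]}(M)$.

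More generally, apply $\Cor_{K[np^{k+1}]/K[np]}$ to i). This sends every $\Phi_j$ to $p$. Using \eqref{eq:norm npk} and the injectivity of $C'^{\,k}$ (your $C'$) on torsion-free modules, you get $\Cor_{K[np^{k+1}]/K[np]}\bigl(P[np^{k+1}]^\sharp\bigr)=P[np]$. So i) for all $k$ asserts that $P[np]$ lies in $\Cor\bigl(H^1(K[np^{k+1}],T)\bigr)$ for every $k$, i.e.\ that it is a universal norm in the tower.

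\textbf{Why your inputs don't give this.} Your standing inputs are the vanishing of $H^0$ (hence torsion-freeness and injectivity of restriction) and Shapiro's lemma. Write $M=H^1(K[np],T\otimes\cO[G])$ with $G=\Gal(K[np^2]/K[np])$. The cokernel of corestriction then injects into $H^2(K[np],T\otimes I_G)$, where $I_G$ is the augmentation ideal. Vanishing of $H^0$ says nothing about this group. So nothing in your inputs forces $H^1(K[np^{k+1}],T)$ to be free, or even cohomologically trivial, over $\tilde\Lambda_k$.

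Nor does \cite[(3.3)]{BBL1} help. It is a statement about the free modules $\tilde\Lambda_k^2$, used afterwards to pass to the inverse limit. It produces no $H_k$-preimages inside cohomology.

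The paper does not reprove this step either; it imports the argument of \cite[Theorem~4.10]{ABCL}. To be complete, your proposal needs that input, or a direct proof that these particular Heegner classes have the required divisibility.

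\textbf{Your divisibility criterion is misstated.} Even in a free $\tilde\Lambda_k$-module, $z\in\Phi_k\tilde\Lambda_k$ if and only if $\omega_{k-1}z=0$, i.e.\ $z$ is fixed by $\Gal(K[np^{k+1}]/K[np^k])$. Vanishing of the trace to $K[np^k]$, i.e.\ $\Phi_kz=0$, instead detects divisibility by $\omega_{k-1}$.

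For $y=-\res P[np^k]$ the required invariance is automatic. The norm relation \eqref{eq:norm npk} enters one step later: it gives $\Phi_k v=-\res P[np^{k-1}]$. In the free case this makes $v$ divisible by $\Phi_{k-1}$, which is what lets the induction continue. With this correction your induction does prove i) when the cohomology is $\tilde\Lambda_k$-free. Also, \eqref{eq:norm np-n} and \eqref{eq:norm-np2}, which describe traces down to $K[n]$, play no role at $k=1$.

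\textbf{Uniqueness and ii).} Your computation for ii) is correct: $\Cor\circ C_{k+1}=C'\circ\Cor$, the right-hand side equals $C'\bigl(P[np^{k+1}],-\res P[np^k]\bigr)$, and $C'$ is injective. However, both its last step and your uniqueness argument are formal only under one reading of the notation. They work if $\ker(H_k)\cdot H^1(K[np^{k+1}],T)^{\oplus2}$ means the kernel of $H_k$ acting on $H^1(K[np^{k+1}],T)^{\oplus2}$. If it means the submodule generated by $\ker(H_k)\subset\tilde\Lambda_k^2$, the two agree only under the same kind of flatness you are missing.
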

\begin{proof}
This follows form \eqref{eq:norm np-n} using the same proof as \cite[Theorem~4.10]{ABCL}.    
\end{proof}

Let $\tilde\TT = T\otimes_{\cO} \tilde\Lambda^\iota$, where $\tilde\Lambda^\iota$ denotes the $G_{K[np]}$-module, which is $\tilde\Lambda$ as a set, with the action of $G_{K[np]}$ given by the inverse of its image in $\Gamma^{(np)}$. Similarly, put $\TT=T\otimes_{\cO}\Lambda^\iota$, where $\Lambda^\iota$ denotes the $G_K$-module $\cO[[\Gamma]]$ with the natural $G_K$-action reversed.

    Through Theorem~\ref{thm_main_sharpflat_Heegner_ES}, we can define $\tilde \PP_n^\sharp,\tilde \PP_n^\flat\in H^1(K[np],\tilde\TT)$ as the classes given by the inverse limit
    \[
    \left(\begin{bmatrix}
        P[np^{k+1}]^\sharp\\ P[np^{k+1}]^\flat
    \end{bmatrix}\right)_{k\ge1}\in\varprojlim_k H^1(K[np^{k+1}],T)^{\oplus 2}/\ker (H_k)\cdot H^1(K[np^{k+1}],T)^{\oplus 2}.
    \]

Next, we extend the definition of $P[np^k]^{\sharp/\flat}$ to include the cases where $k = 0, 1$.

For $\bullet\in \{\sharp,\flat\}$, we define $P[np]^{\bullet}\in H^1(K[np],T)$ as $\Cor_{K[np^2]/K[np]}P[np^2]^\bullet$. Similarly, set $P[n]^{\bullet} = \Cor_{K[np^2]/K[n]}P[np^2]^\bullet \in H^1(K[n],T)$. Note that under the natural projection $H^1(K[np],\tilde\TT)\to H^1(K[np], T)$, $\tilde\PP_n^\bullet$ is mapped to $P[np]^\bullet$.

\begin{lem}\label{lem:specializing-Heegner}
    The elements $P[n]^\sharp$ and $P[n]^\flat$ coincide with $\gamma_0 P[n]$ and $-\delta P[n]$. respectively.
\end{lem}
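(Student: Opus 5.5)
The plan is to unwind property i) of Theorem~\ref{thm_main_sharpflat_Heegner_ES} at level $k=1$, where $H_1=C_1$ is a single explicit matrix, and then corestrict down to $K[n]$ using the norm relations \eqref{eq:norm np-n} and \eqref{eq:norm-np2}.

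\textbf{Step 1: the identity at $k=1$.} Since $H_1=C_1=\begin{bmatrix}a_p&1\\-\Phi_1&0\end{bmatrix}$, property i) gives two equalities in $H^1(K[np^2],T)$:
\[
a_pP[np^2]^\sharp+P[np^2]^\flat=P[np^2],\qquad \Phi_1P[np^2]^\sharp=\res_{K[np^2]/K[np]}P[np].
\]
Here $\Phi_1=\sum_{\tau\in\cG_1}\tau$ is the norm element of $\cG_1=\Gal(K[np^2]/K[np])$. Hence $\Phi_1 x=\res\circ\Cor(x)$ for every class $x$ over $K[np^2]$, and the second equality reads
\[
\res_{K[np^2]/K[np]}\bigl(P[np]^\sharp-P[np]\bigr)=0.
\]

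\textbf{Step 2: injectivity of restriction (main obstacle).} I will show that $\res_{K[np^2]/K[np]}$ is injective on $H^1(K[np],T)$. By inflation--restriction its kernel is $H^1(\cG_1,T^{G_{K[np^2]}})$. This group vanishes because $T$ has no nonzero invariants under $G_{K[np^2]}$: the torsion of $\cA_f$ over a finite extension is finite and $T$ is torsion-free. Granting this, $P[np]^\sharp=P[np]$, and applying $\Cor_{K[np]/K[n]}$ together with \eqref{eq:norm np-n} gives $P[n]^\sharp=u_K^{-1}\gamma_0P[n]$.

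\textbf{Step 3: the constant $u_K$.} The statement reads $\gamma_0P[n]$, so the factor $u_K^{-1}$ must be reconciled. I will check the normalisation: either $u_K=1$ in the situation at hand, or the factor is absorbed as a unit, since $u_K\in\{1,2,3\}$ is prime to $p\ge5$. The same constant also appears in $\gamma_1$, so it has to be tracked consistently through both identities.

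\textbf{Step 4: the flat class.} From the first equality of Step 1, corestrict from $K[np^2]$ to $K[n]$. Use \eqref{eq:norm-np2} for $P[np^2]$ and Step 2 for $\Cor P[np^2]^\sharp$:
\[
P[n]^\flat=\gamma_1P[n]-a_p\,u_K^{-1}\gamma_0P[n]=-\delta P[n],
\]
which follows directly from the definition $\gamma_1=u_K^{-1}a_p\gamma_0-\delta$.

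\textbf{Well-definedness.} The pair $(P[np^2]^\sharp,P[np^2]^\flat)$ is only defined modulo $\ker(H_1)$. However, $P[n]^\bullet$ is defined as the corestriction of the specific components of the limit class $\tilde\PP^\bullet_n$. The equalities in property i) hold on the nose in $H^1(K[np^2],T)^{\oplus2}$, so no ambiguity enters the computation above.
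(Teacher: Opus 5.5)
Your proof is correct. It shares the paper's skeleton: take $k=1$ in property i) of Theorem~\ref{thm_main_sharpflat_Heegner_ES} and push down with the two norm relations. Where you differ is in how you solve the $\sharp$-row.

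\textbf{Comparison with the paper.} The paper applies $\Cor_{K[np^2]/K[n]}$ to both rows at once, so $\Phi_1$ becomes multiplication by $p$, and then inverts $\begin{bmatrix}a_p&1\\-p&0\end{bmatrix}$. This tacitly divides by $p$ in $H^1(K[n],T)$. That step is legitimate only because $H^1(K[n],T)[\varpi]\cong H^0(K[n],T/\varpi T)=0$ by Lemma~\ref{lem:T-no-invariant}, which is an input about the residual representation at $p$. You instead read $\Phi_1$ as the norm element of $\cG_1$, which turns the second row into $\res_{K[np^2]/K[np]}(P[np]^\sharp-P[np])=0$. You then conclude by injectivity of restriction, which needs only $T^{G_{K[np^2]}}=0$, i.e.\ finiteness of torsion in $\cA_f(K[np^2])$. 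So your route avoids any division by $p$, uses nothing about $T/\varpi T$, and gives the finer identity $P[np]^\sharp=P[np]$ at level $K[np]$. The paper's route, in exchange, is a one-line matrix inversion. Your handling of the ambiguity modulo $\ker(H_1)$ is also fine: any representative satisfies i), and your computation then pins down both corestrictions.

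\textbf{On Step 3.} Do not hedge here. With the norm relations as written, the output is $P[n]^\sharp=u_K^{-1}\gamma_0P[n]$ and $P[n]^\flat=-\delta P[n]$. This is exactly what the paper's own intermediate system produces: its second row reads $-pP[n]^\sharp=-p\gamma_0u_K^{-1}P[n]$. The paper's final line then drops $u_K^{-1}$, and its identity $\gamma_1-a_p\gamma_0=-\delta$ likewise requires $u_K=1$. A unit cannot be absorbed into an equality. So the lemma holds verbatim when $u_K=1$, i.e.\ $\cO_K^\times=\{\pm1\}$, and in general holds only up to the unit $u_K^{-1}$ on the $\sharp$-component. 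This discrepancy is harmless for the later uses in Lemma~\ref{lem:cong-class-alpha}, Remark~\ref{rem:kappa-sharp} and Theorem~\ref{thm:kolyvagin}. Those results only use that $P[n]^\bullet$ is an explicit multiple of $P[n]$, and, for $\sharp$, one that vanishes when $\gamma_0=0$.
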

\begin{proof}
Setting $k=1$ in Theorem~\ref{thm_main_sharpflat_Heegner_ES}.i) gives
    \[
\begin{bmatrix}
    a_p&1\\-\Phi_1&0
\end{bmatrix}    
\begin{bmatrix}
P[np^2]^\sharp \\ P[np^2]^\flat
\end{bmatrix}= \begin{bmatrix}
P[np^2]\\ -\res_{K[np^2]/K[np]}\left( P[np]\right)
\end{bmatrix}.
    \]
    Applying $\Cor_{K[np^2]/K[n]}$, we deduce from \eqref{eq:norm np-n} and \eqref{eq:norm-np2}:
     \[
\begin{bmatrix}
    a_p&1\\-p&0
\end{bmatrix}    
\begin{bmatrix}
P[n]^\sharp \\ P[n]^\flat
\end{bmatrix}=\begin{bmatrix}
    \gamma_1 P[n]\\ -p\gamma_0 u_K^{-1} P[n]
\end{bmatrix}.
    \]
   Thus,
   \[
\begin{bmatrix}
P[n]^\sharp \\ P[n]^\flat
\end{bmatrix}= \begin{bmatrix}
\gamma_0\\ \gamma_1-a_p\gamma_0
\end{bmatrix}P[n]= \begin{bmatrix}
\gamma_0\\ -\delta
\end{bmatrix}P[n],
    \]
    as desired.
\end{proof}

We define $\PP_n^\bullet$ as the image of $\tilde\PP_n^\bullet$ under the composition of maps 
\[
H^1(K[np],\tilde\TT)\to H^1(K[np],\TT) \xrightarrow{\rm cores} H^1(K[n],\TT),
\]
where the first map is induced from the inclusion $\Gal(K[np^\infty]/K[np])\to \Gal(K_\infty/K)$.
\begin{rem}\label{rem:untilde-P}
    We note that under the projection $H^1(K[n],\TT) \to H^1(K[n],T)$ induced by the trivial character $\1: \Gamma\to \{1\}$, $\PP_n^\bullet$ is sent to $P[n]^\bullet$ for $\bullet \in \{\sharp,\flat\}$. This is because the following diagram is commutative:
    \[
        \begin{tikzcd}
            H^1(K[np],\tilde \TT) \ar[d] \ar[r] & H^1(K[n], \TT) \ar[d]\\
            H^1(K[np],T) \ar[r,"\rm cores"] & H^1(K[n], T),
        \end{tikzcd}
    \]
    where the top arrow is the map defining $\PP_n^\bullet$.
\end{rem}

Let $D_n$ be Kolyvagin's derivative. That is,
\[
D_n=\prod_{\ell |n}\left(\sum_{j=1}^\ell j\sigma_\ell ^j\right),
\]
where $\sigma_\ell$ is a generator of the cyclic group $G(\ell):=\Gal(K[\ell]/K[1])$.

\begin{de}
Let $\cG(n)=\Gal(K[n]/K)$, $G(n)=\Gal(K[n]/K[1])\simeq\prod_{\ell|N} \Gal(K[\ell]/K[1]) \subseteq \cG(n)$, and fix $S_n$ a set of coset representatives of $\cG(n)/G(n)$.
We set $\tilde\kappa^\Heeg_n=\sum_{\sigma\in S_n}\sigma D_n(P[n])\in H^1(K[n],T)$.

For $\bullet\in\{\sharp,\flat\}$, we define $\tilde \bka_n^\bullet\in H^1(K[n],\TT)$ as the image of $ \PP_n^\bullet$ under the map $\sum_{\sigma\in S_n}\sigma D_n$.
\end{de}

\begin{lem}\label{lem:T-no-invariant}
    Let $w'$ be a prime of $K[np^\infty]$ lying above $p$. We have $H^0(K[np^\infty]_{w'}, T/\varpi) = 0$.
\end{lem}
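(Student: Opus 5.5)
The plan is to work with the restriction of $\overline\rho:=T/\varpi$ to the inertia group at $p$, using the Fontaine--Laffaille description of the mod $\varpi$ reduction of a crystalline representation with Hodge--Tate weights $\{0,1\}$. Since $\cA_{f}$ has good supersingular (non-ordinary) reduction at $\varpi$ and $p\ge 5$, the restriction of $\overline\rho$ to $G_{\Q_p}$ is irreducible. Its restriction to inertia $I_p$ is $\omega_2\oplus\omega_2^p$ (after extending scalars), where $\omega_2$ is a fundamental character of level $2$. This is the standard result of Fontaine (see also Edixhoven, Serre) for supersingular reduction: the $\varpi$-torsion of the formal group of height $2$ over $\cO$. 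In particular, $\overline\rho|_{I_p}$ has no nonzero vectors fixed by any open subgroup of $I_p$ whose image under $\omega_2$ is nontrivial.

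The key step is to show that $\omega_2$, and likewise $\omega_2^p$, restricted to $G_{K[np^\infty]_{w'}}\cap I_p$ remains nontrivial. For this we control the ramification of $K[np^\infty]_{w'}/\Q_p$. The extension $K[np^\infty]/K$ is a ring class field extension, and $p\nmid n$. Its local extension at $w'$ is abelian over $K_v$, where $v$ is the prime of $K$ below $w'$ and $K_v$ is unramified over $\Q_p$. The ramification of this local extension comes from $\cO_{K_v}^\times$ modulo the image of units of $\Z_p$ (ring class fields of $p$-power conductor). Hence the inertia quotient is a quotient of $(\cO_K\otimes\Z_p)^\times/\Z_p^\times$, which has the form ``finite group of order prime to $p$ dividing $\delta$, times a pro-$p$ group''. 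So the ramification index of $K[np^\infty]_{w'}/\Q_p$ has prime-to-$p$ part dividing $\delta$, which is $p+1$ (inert case) or $p-1$ (split case). Since $\omega_2$ has order $p^2-1$ on $I_p$, its restriction to the inertia of $K[np^\infty]_{w'}$ has order divisible by $(p^2-1)/(p+1)=p-1$ or by $(p^2-1)/(p-1)=p+1$. Both are $>1$ because $p\ge5$; in fact we only need nontriviality. The same holds for $\omega_2^p$.

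It follows that $H^0(I_{w'},\overline\rho)=0$, where $I_{w'}$ is the inertia subgroup of $G_{K[np^\infty]_{w'}}$. A fixed vector would lie in a sum of eigenspaces for the characters $\omega_2,\omega_2^p$ restricted to $I_{w'}$, and these characters are nontrivial. Hence $H^0(K[np^\infty]_{w'},T/\varpi)\subseteq H^0(I_{w'},T/\varpi)=0$.

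The main obstacle I expect is making the inertia computation precise. We need to identify exactly the tame part of the ramification of $K[np^\infty]_{w'}$ over $\Q_p$, that is, to check that the tame inertia image has order dividing $p\pm1$, and to phrase $\overline\rho|_{I_p}$ over $\cO/\varpi$ rather than $\overline{\FF}_p$. For the former, I would use class field theory: $\Gal(K[np^\infty]/K[n])$ is the image of $(\cO_K\otimes\Z_p)^\times/\Z_p^\times$, whose torsion has order $\delta$ up to units. For the latter, it suffices to extend scalars to $\overline\FF_p$, since vanishing of invariants is insensitive to base change.
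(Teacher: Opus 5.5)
Your proof is correct and follows the same route as the paper. Both use Fontaine's description (via Edixhoven, Thm.~2.6) of $T/\varpi$ restricted to inertia as $\omega_2\oplus\omega_2^p$, of order $p^2-1$, together with the fact that the inertia quotient of $K[np^\infty]_{w'}/\Q_p$ is a pro-$p$ group times a cyclic group of order dividing $\delta=p\pm1$, so both characters remain nontrivial on the inertia of $K[np^\infty]_{w'}$. Your class-field-theoretic count (index dividing $\delta$, hence $\omega_2|_{I_{w'}}$ of order divisible by $(p^2-1)/\delta>1$) is simply a more explicit version of the paper's one-line claim; the only slip is calling $I_{w'}$ open in $I_p$ (it is closed of infinite index), which does not affect the argument.
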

\begin{proof}
    Let $w$ be the prime of $K[n]$ lying below $w'$. Then $K[n]_w/\QQ_p$ is an unramified extension, and $K[np^\infty]_{w'}/K[n]_w$ is a totally ramified $\Zp$-extension. Let $I$ and $I'$ be the inertia groups of $K[n]_w$ and $K[np^\infty]_{w'}$, respectively. It follows from \cite[Theorem~2.6]{edixhoven92} that the action of $I$ on $T/\varpi T$ is given by nontrivial characters of order exactly $p^2-1$. As $I/I'$ is isomorphic to the direct sum of a pro-$p$ group with $(\cO/p)^\times/(\Z/p)^\times$, we have $H^0(I',T/\varpi T) =H^0(I,T/\varpi T) =0$. Hence, $H^0(K[np^\infty],T/I_n T)=0$, as desired.
\end{proof}

\begin{cor}\label{cor:iso-rest}
    The restriction map induces an isomorphism
    \[
    H^1(K,\TT/I_n\TT)\cong H^1(K[n],\TT/I_n\TT)^{\cG(n)}.
    \]
    In addition, the images of $\tilde \bka_n^\sharp,\tilde \bka_n^\flat$ under the natural map $H^1(K[n],\TT)\to H^1(K[n],\TT/I_n\TT)$ belong to $H^1(K[n],\TT/I_n\TT)^{\cG(n)}$. Similarly, we have an isomorphism
    \[
        H^1(K,T/I_n T)\cong H^1(K[n],T/I_n T)^{\cG(n)},
    \]
    and the image of $\tilde{\kappa}_n^\Heeg$ in $H^1(K[n],T/I_n)$ is invariant under $\cG(n)$.
\end{cor}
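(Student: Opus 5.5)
The argument has two parts. First, the restriction maps are isomorphisms, which follows from inflation--restriction plus Lemma~\ref{lem:T-no-invariant}. Second, the derived classes are Galois-invariant, which is Kolyvagin's classical computation.

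\emph{Restriction isomorphisms.} Inflation--restriction for the finite extension $K[n]/K$, with group $\cG(n)$, gives the exact sequence
\[
0\to H^1(\cG(n),M^{G_{K[n]}})\to H^1(K,M)\to H^1(K[n],M)^{\cG(n)}\to H^2(\cG(n),M^{G_{K[n]}})
\]
for $M=\TT/I_n\TT$ or $M=T/I_nT$. It therefore suffices to show $M^{G_{K[n]}}=0$.

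\begin{itemize}
\item For $M=T/I_nT$: since $I_n=\varpi^{M(n)}\cO$, $M$ is a successive extension of copies of $T/\varpi T$, so by dévissage it suffices to show $(T/\varpi T)^{G_{K[n]}}=0$. Restricting further to a decomposition group at a prime above $p$, and then to the subgroup fixing $K[np^\infty]_{w'}$, Lemma~\ref{lem:T-no-invariant} gives vanishing.
\item For $M=\TT/I_n\TT=T/I_nT\otimes\Lambda^\iota$: the group $G_{K[np^\infty]}$ acts trivially on $\Lambda^\iota$. Hence $\TT/I_n\TT$ restricted to $G_{K[np^\infty]}$ is a direct sum (or product, via $\Lambda^\iota=\varprojlim \cO[G_m]$) of copies of $T/I_nT$. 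Its $G_{K[np^\infty]}$-invariants vanish by the same local argument, since $G_{K[np^\infty]_{w'}}\subset G_{K[np^\infty]}$. A fortiori the $G_{K[n]}$-invariants vanish.
\end{itemize}

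\emph{Invariance of the derived classes.} This is the standard argument (Kolyvagin; cf.\ \cite[\S1]{howard04-gl2}). Since $\cG(n)$ is generated by $G(n)$ together with the representatives $S_n$, and $\sum_{\sigma\in S_n}\sigma$ is a trace from $G(n)$-level to $\cG(n)$-level, it suffices to show that $D_n P[n]$ (respectively $D_n\PP_n^\bullet$) is $G(n)$-invariant modulo $I_n$. For this, use the telescoping identity $(\sigma_\ell-1)D_\ell=\ell+1-\Tr_{G(\ell)}$, where $\#G(\ell)=\ell+1$ because $\ell$ is inert. Combined with the norm relation \eqref{eq:norm-l}, this gives
\[
(\sigma_\ell-1)D_nP[n]=(\ell+1)D_{n/\ell}P[n]-a_\ell D_{n/\ell}P[n/\ell],
\]
which lies in $I_n H^1$ because $\varpi^{M(n)}$ divides both $\ell+1$ and $a_\ell$.

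For the signed classes we need the analogous norm relation
\[
\Cor_{K[n]/K[n/\ell]}\PP_n^\bullet=a_\ell\,\PP_{n/\ell}^\bullet
\]
at the $\Lambda$-adic level. This should follow from \eqref{eq:norm-l} at every layer $np^{k+1}$, because the construction in Theorem~\ref{thm_main_sharpflat_Heegner_ES} is linear and characterized uniquely, so it commutes with corestriction in the $n$-direction. Establishing this compatibility, including through the quotient by $\ker H_k$ and the passage from $\tilde\TT$ to $\TT$, is the main point needing care. I would use uniqueness: corestriction of the pair for $n$ satisfies property~i) for $n/\ell$ with right side multiplied by $a_\ell$.

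Finally, since $G_K$ acts on $\Lambda^\iota$ through $\Gamma$, and $\cG(n)$ acts on $H^1(K[n],\TT)$ through conjugation, the same algebra applies verbatim.
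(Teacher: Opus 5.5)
Your proposal is correct and follows essentially the same route as the paper. For the two isomorphisms you use inflation--restriction together with Lemma~\ref{lem:T-no-invariant}, applied after restricting to $G_{K[np^\infty]}$, which acts trivially on $\Lambda^\iota$. For the invariance you use Kolyvagin's telescoping identity $(\sigma_\ell-1)D_\ell=\ell+1-\Tr_{G(\ell)}$ with the norm relations, which is what the paper invokes by citing \cite[Lemma~1.7.1]{howard04} and \cite{gross-durham}. You also make explicit what the paper leaves implicit, the relation $\Cor_{K[n]/K[n/\ell]}\PP_n^\bullet=a_\ell\PP_{n/\ell}^\bullet$ for the signed $\Lambda$-adic classes. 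Your derivation of it from \eqref{eq:norm-l} at each layer, plus the uniqueness in Theorem~\ref{thm_main_sharpflat_Heegner_ES} (both defining conditions are linear), is sound.
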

\begin{proof}

By the inflation-restriction exact sequence, to prove the first isomorphism, it suffices to show the vanishing of $H^0(K[n],\TT/I_n \TT)$. Since the latter is a subgroup of $H^0(K[np^\infty],\TT/I_n \TT) = H^0(K[np^\infty],T/I_n T)\otimes \Lambda^{\iota}$, the desired vanishing follows from Lemma \ref{lem:T-no-invariant}.

The assertion regarding the $\cG(n)$-invariance of $\tilde \bka_n^{\sharp/\flat}$ follows from the same proof as \cite[Lemma~1.7.1]{howard04}. The rest of the statement can be proved in a manner similar to the discussion above (see, e.g., \cite[\S\S3-4]{gross-durham}).
\end{proof}

\begin{de}
We define $\kappa_n^\Heeg\in H^1(K,T/I_nT)$ as the unique class that maps to the Kummer image of $\tilde \kappa_n^\Heeg$. 
    
Similarly, we define $\bka_n^\sharp,\bka_n^\flat\in H^1(K,\TT/I_n
    \TT)$ to be the classes that are sent to the images of $\tilde \bka_n^\sharp,\tilde \bka_n^\flat$ in $H^1(K[n],\TT/I_n \TT)^{\cG(n)}$ under the isomorphism given by Corollary~\ref{cor:iso-rest}.
\end{de}

\begin{de}
 Given a character $\alpha:\Gamma\to\cO^\times$, 
 put $T_\alpha=T\otimes\cO(\alpha)$, $V_\alpha = V\otimes_{F}F(\alpha)$ and $A_\alpha=V_\alpha/T_\alpha$. We denote the natural specialization maps by
    \[
    \pi_\alpha:H^1(K,\TT/I_n\TT)\to H^1(K,T_\alpha/I_nT_\alpha),\quad H^1(K_w,\TT/I_n\TT)\to H^1(K_w,T_\alpha/I_nT_\alpha).
    \]
We define for $\bullet\in\{\sharp,\flat\}$ the twisted Heegner class $\bka_n^\bullet(\alpha)\in H^1(K,T_\alpha/I_n
    T_\alpha)$ as the image of $\bka_n^\bullet$ under $\pi_\alpha$.
\end{de}

For a character $\alpha$ of $\Gamma$ satisfying $ \alpha\equiv1\mod \varpi^m\cO\cap \Z_p$ and $n\in\cN$ such that $M(n)\ge m$, there is a natural map
\[
H^1(K,T/I_nT)\to H^1(K,T/\varpi^mT)=H^1(K,T_\alpha/\varpi^mT_\alpha).
\]
We can compare the specialization of $\bka_n^\bullet(\alpha)$ with $\kappa_n^\Heeg$ modulo $\varpi^m$ via this map.

\begin{lem}\label{lem:cong-class-alpha}
    Suppose $\alpha$ is a character of $\Gamma$ with $\alpha\equiv 1\mod \varpi^m$. For all $n\in\cN$ with $M(n)\ge m$, we have the congruence
    \[
    \bka_n^\sharp(\alpha)\equiv \gamma_0 \kappa_n^\Heeg,\quad\bka_n^\flat(\alpha)\equiv -\delta \kappa_n^\Heeg
    \]
    as elements in $H^1(K,T_\alpha/\varpi^m T_\alpha)= H^1(K,T/\varpi^m T)$.
\end{lem}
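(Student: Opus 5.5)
The plan is to reduce the comparison to the trivial character and then apply Lemma~\ref{lem:specializing-Heegner}. Since $\alpha\equiv 1\bmod\varpi^m$, the specialization maps $\pi_\alpha$ and $\pi_{\1}$ agree after reduction modulo $\varpi^m$. More precisely, $\TT/(\varpi^m,I_n)\TT\to T_\alpha/\varpi^mT_\alpha$ is the map $T\otimes\Lambda^\iota/\varpi^m\to T/\varpi^m$ induced by $\gamma\mapsto\alpha(\gamma)$, and $\alpha(\gamma)\equiv1$. So this map coincides with the one induced by $\gamma\mapsto 1$, and both are $G_K$-equivariant onto $T/\varpi^mT$. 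Here we use $M(n)\ge m$, so that $I_n\subseteq\varpi^m\cO$ and $\TT/I_n\TT$ surjects onto $\TT/\varpi^m\TT$. Hence
\[
\bka_n^\bullet(\alpha)\bmod\varpi^m=\bka_n^\bullet(\1)\bmod\varpi^m
\]
in $H^1(K,T/\varpi^mT)$. It therefore suffices to treat $\alpha=\1$.

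For $\alpha=\1$, I will compute the images in $H^1(K[n],T/\varpi^m T)$ and use the injectivity of restriction to $K[n]$. This injectivity follows from Corollary~\ref{cor:iso-rest}, or rather its analogue for $T/\varpi^m T$, proved the same way from $H^0(K[n],T/\varpi T)=0$ via Lemma~\ref{lem:T-no-invariant}. By construction, the restriction of $\bka_n^\bullet$ to $K[n]$ is the image of $\tilde\bka_n^\bullet=\sum_{\sigma\in S_n}\sigma D_n\PP_n^\bullet$. The specialization $H^1(K[n],\TT)\to H^1(K[n],T)$ at $\1$ is $\cG(n)$-equivariant, since $\cG(n)$ acts on $H^1(K[n],-)$ by conjugation, compatibly with the $G_K$-equivariant map $\TT\to T$. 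It therefore commutes with the operator $\sum_{\sigma\in S_n}\sigma D_n$. By Remark~\ref{rem:untilde-P}, $\PP_n^\bullet$ specializes to $P[n]^\bullet$, which by Lemma~\ref{lem:specializing-Heegner} equals $\gamma_0P[n]$ for $\bullet=\sharp$ and $-\delta P[n]$ for $\bullet=\flat$. Thus $\tilde\bka_n^\bullet(\1)=\gamma_0\tilde\kappa_n^\Heeg$ or $-\delta\tilde\kappa_n^\Heeg$ respectively, using that $\gamma_0$ commutes with $\sigma D_n$.

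Reducing modulo $\varpi^m$ and using the compatibility of the isomorphisms of Corollary~\ref{cor:iso-rest} with the reductions $T/I_n\to T/\varpi^m$ and $\TT/I_n\TT\to T/\varpi^m T$ gives the congruences in $H^1(K,T/\varpi^mT)$. In the split case, $\gamma_0=a_p-\sigma-\sigma^*$ is an element of $\cO[\cG(n)]$, so $\gamma_0\kappa_n^\Heeg$ has to be interpreted with some care. The cleanest choice is to note that $\gamma_0$ acts on the $\cG(n)$-invariant class $\kappa_n^\Heeg$ mod $I_n$ through the augmentation, as $a_p-2$. I would check this interpretation matches the paper's intended meaning.

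The main obstacle I expect is bookkeeping rather than substance. First, one must verify that specialization commutes with the operator $\sum_{\sigma}\sigma D_n$ and with descent from $K[n]$ to $K$. Second, one must handle the action of $\gamma_0$ on cohomology of $K[n]$ versus $K$ in the split case. I would handle the first by writing every map as induced from a $G_K$-equivariant coefficient map, which is automatically natural for restriction, corestriction and conjugation. I would handle the second by the invariance remark above.
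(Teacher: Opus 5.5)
Your proposal is correct and follows the paper's proof: specialize at the trivial character, apply $\sum_{\sigma\in S_n}\sigma D_n$ to the identities of Lemma~\ref{lem:specializing-Heegner} via Remark~\ref{rem:untilde-P} to get $\gamma_0\tilde\kappa_n^\Heeg$ and $-\delta\tilde\kappa_n^\Heeg$ in $H^1(K[n],T)$, then use $M(n)\ge m$ and $\alpha\equiv 1 \bmod \varpi^m$ to compare images in $H^1(K,T/\varpi^m T)$. The extra bookkeeping you supply is what the paper leaves implicit: injectivity of restriction to $K[n]$ for $T/\varpi^m T$, and reading $\gamma_0$ in the split case as acting by $a_p-2$ on the $\cG(n)$-invariant class.
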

\begin{proof}
By Remark \ref{rem:untilde-P}, applying $\sum_{\sigma\in S_n}\sigma D_n$ to the equations given by Lemma~\ref{lem:specializing-Heegner} gives
\[
    \tilde\bka_n^\sharp(\mathbb{1})=\gamma_0\tilde\kappa_n^\Heeg,\quad \tilde\bka_n^\flat(\mathbb{1})=-\delta\tilde\kappa_n^\Heeg
\]
as elements in $H^1(K[n],T)$.
As in $M(n)\ge m$ and $\alpha\equiv 1\mod \varpi^m$, comparing the images of these classes in $H^1(K,T_\alpha/\varpi^m T_\alpha) = H^1(K,T/\varpi^m T)$ gives the congruences asserted.
\end{proof}

\begin{rem}\label{rem:kappa-sharp}
    Note that Lemma~\ref{lem:cong-class-alpha} tells us that $\bka_n^\sharp(\alpha)\equiv 0\mod p^m$ in the case where $p$ is inert in $K$ and $a_p=0$. This is related to the ``extra zero'' phenomenon observed in \cite{BLV}. 
\end{rem}

\subsection{Signed conditions and Coleman maps}
We review the signed local conditions at $p$ that are used to define signed Selmer groups. We also recall orthogonality properties satisfied by these conditions.

Let $w$ be a place in $K$ above $p$. In what follows, we write $\Lambda'=\Lambda\otimes_{\Z_p}\cO_{K_w}$, $F'=F\otimes_{\Q_p}K_w$, and $\cO'$ the valuation ring of $F'$. When $p$ splits in $K$, we can identify $K_w$ with $\Q_p$, and we have $\Lambda'=\Lambda$, 
$F'=F$ and $\cO'=\cO$. In the case where $p$ is inert in $K$, we have $F=\Q_p$ and $K_w=\QQ_{p^2}$, so $\Lambda'=\ZZ_{p^2}[[\Gamma]]$, $F'=\QQ_{p^2}$ and $\cO'=\ZZ_{p^2}$. We recall from \cite[\S4.1]{BBL1} that we can construct signed Coleman maps
\[
\col_{[n]}^\sharp,\col_{[n]}^\flat: H^1(K_w,\TT/I_n\TT)\rightarrow \Lambda'/I_n
\]
when given a $Q$-system of elements in the sense of Definition 4.1 of \textit{op.~cit}. When $n=1$, we simply write 
\[
\col^\sharp,\col^\flat: H^1(K_w,\TT)\rightarrow \Lambda'.
\]

\begin{de}
For $\bullet\in\{\sharp,\flat\}$ and a prime $w|p$, we define $H^1_\bullet(K_w,\TT/I_n\TT)$ as the kernel of $\col_{[n]}^\bullet$.
\end{de}

We conclude this subsection by establishing the self-orthogonality of $\ker(\col^\bullet)$.

\subsubsection{The split case}\label{S:orth-split}

We review the construction of the Coleman maps when $p$ is split in $K$, as outlined in \cite[\S5]{BBL1}. Let $w$ be a place in $K$ above $p$. Let $\cL_{w}:H^1(K_w,\TT)\rightarrow\Dcris(T)\otimes\cH_\sW(\Gamma)$ be the Perrin-Riou map, which is defined as the specialization of the two-variable Perrin-Riou map in \cite{LZ0} (see \cite[Theorem~5.1]{CastellaHsiehGHC}). Here, $\cH_\sW(\Gamma)$ denotes the algebra of tempered distributions on $\Gamma$ with coefficients in $\sW$, where $\sW$ is the ring of integers of the completion of the maximal unramified extension of $\Q_p$.

Let $\Mlog=\displaystyle\lim_{m\to\infty}\begin{bmatrix}
    a_p&1\\-p&0
\end{bmatrix}^{-m-1}C_m\cdots C_1$. Then
\[
\cL_w(z)=u_e\begin{bmatrix}
    v_1&v_2
\end{bmatrix}\Mlog\begin{bmatrix}
    \col^\sharp(z)\\\col^\flat(z)
\end{bmatrix},
\]
where $v_1$ is an $\cO$-basis of $\Fil^0\Dcris(T)$, $v_2=\varphi(v_1)$, and $u_e$ is a unit in $\Lambda\hat\otimes\sW$.

Denote by $T^*$ the linear dual $\Hom_{\cO}(T,\cO)$. Note that $T\cong T^*(1)$ as $G_\Q$-representations. Let $\{v_1',v_2'\}$ be the dual basis of $\{v_1,v_2\}$ with respect to the standard pairing $[\sim,\sim]:\Dcris(T)\times\Dcris(T^*(1))$. Note that $v_2'$ is an $\cO$-basis of $\Fil^0\Dcris(T)$ and $v_1'=-\varphi(v_2')$ since $[v_1,v_1]=[v_2,v_2]=0$ and $[v_1,v_2]=-[v_2,v_1]\ne0$. As explained in \cite[\S3]{lei-ponsinet}, we have the dual Coleman maps satisfying
\[
\cL_w(z)=u_e\begin{bmatrix}
    v_1'&v_2'
\end{bmatrix}\Mlog'\begin{bmatrix}
    \col'^\flat(z)\\\col'^\sharp(z)
\end{bmatrix},
\]
where $\Mlog'=\displaystyle\lim_{m\to\infty}\begin{bmatrix}
   0& p\\-1&a_p
\end{bmatrix}^{-m-1}\begin{bmatrix}
    0&\Phi_m\\-1&a_p
\end{bmatrix}\cdots\begin{bmatrix}
    0&\Phi_1\\-1&a_p
\end{bmatrix}$. In addition, for $\bullet\in\{\sharp,\flat\}$,  $\ker\col^\bullet$ and $\ker\col'^\bullet$ are orthogonal complement of each other with respect to the $\Lambda$-adic pairing 
\[
\langle\sim,\sim\rangle:H^1(K_w,\TT)\times H^1(K_w,\TT)\to\Lambda
\]
as proved in Lemma 3.2 of \textit{op.~cit}. Furthermore, the bases $\{v_1,v_2\}$ and $\{v_1',v_2'\}$ are related by a change of basis matrix
\[
\begin{bmatrix}
    v_1& v_2
\end{bmatrix}=\begin{bmatrix}
    v_1'&v_2'
\end{bmatrix}\begin{bmatrix}
    0&-c\\
    c&0
\end{bmatrix},
\]
where $c$ is a unit of $\cO$ since both $v_1$ and $v_2'$ are bases of $\Fil^0\Dcris(T)$, and $v_2=\varphi(v_1)$ and $v_1=-\varphi(v_2)$. Therefore, it follows from the discussion of \cite[Lemma~2.16]{kazim-antonio17} that
\[
\begin{bmatrix}
    \col'^\flat\\ \col'^\sharp
\end{bmatrix}=\begin{bmatrix}
    0&-c\\
    c&0
\end{bmatrix}\begin{bmatrix}
    \col^\sharp\\ \col^\flat
\end{bmatrix},
\]
and hence
\[
\ker\col^\bullet=
\ker\col'^\bullet,\quad \bullet\in\{\sharp,\flat\}.
\]
In particular, we deduce that $\ker\col^\bullet$ is self-orthogonal under $\langle\sim,\sim\rangle$.

\subsubsection{The inert case}\label{S:orth-inert}
When $p$ is inert in $K$, recall that we assume that $\cA_f$ is an elliptic curve and $a_p=0$. In this case, we can define $H^1_\bullet(K_w,\TT)$ explicitly in terms of certain special local points on $\cA_f$ constructed in \cite{BKO1,BKO2}. Let $\hat\cA_f$ denote the formal group of $\cA_f$ at $p$. Let $w$ be the unique place of $K$ lying above $p$. Let $w'$ be a place of $K_\infty$ lying above $w$. For $m\ge0$, let $k_m$ denote the subextension of $K_{\infty,w'}/K_w$ of degree $p^m$.

\begin{thm}\label{thm:Q-inert}
There exists a system of local points $d_m\in \hat{\cA_f}({k_m})$ such that:
\begin{itemize}
    \item[(1)] $\Tr_{k_m/k_{m-1}}d_m=-d_{m-2}$  for all $m\ge 2$;
    \item[(2)] $\Tr_{k_1/k_0}d_1=-d_{0}$;
    \item[(3)] $d_0\in \hat{\cA_f}({k_0})\setminus p\hat{\cA_f}({k_0})$.
\end{itemize}
\end{thm}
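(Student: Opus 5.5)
The plan is to construct the points $d_m$ from a Honda-type formal group, following \cite{BKO1}, in the spirit of Kobayashi's construction in the cyclotomic case \cite{kobayashi03}. Since $a_p=0$ and $\cA_f=E$ has good supersingular reduction at $p\ge5$, the formal group $\hat E$ over $\cO_{K_w}=\ZZ_{p^2}$ is isomorphic to a Lubin--Tate-type formal group of height two. Concretely, one can choose a logarithm of the form
\[
\log_{\hat E}(X)=\sum_{k\ge0}(-1)^k\frac{X^{p^{2k}}}{p^k}
\]
up to a change of variable, which is the Honda type attached to the Frobenius polynomial $X^2+p$. The extension $K_{\infty,w'}/K_w$ is the anticyclotomic $\Zp$-extension of $\QQ_{p^2}$. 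Via local class field theory it is cut out inside the Lubin--Tate tower for the uniformizer $p$ (or $-p$) of $\QQ_{p^2}$: it is the fixed field of $\ZZ_p^\times$ together with the torsion part of $\ZZ_{p^2}^\times$. So I will work with a relative Lubin--Tate group $\mathcal F$ over $\ZZ_{p^2}$ whose torsion points generate this tower, and take $\pi_m$ compatible generators of its $p^{m+1}$-torsion.

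First I define elements
\[
d_m' \;=\; \sum_{k\ge0}(-1)^{k}\,\varphi^{-(m+k)}(\epsilon)\ast\pi_{m+2k}
\]
(the sum taken in $\hat E$), where $\epsilon\in\ZZ_{p^2}$ satisfies $\Tr_{\QQ_{p^2}/\QQ_p}\epsilon=1$ so that the unramified traces work out. The series converges because the $\pi_{j}$ tend to $0$. Then I set $d_m=\Tr_{\text{LT}_m/k_m} d_m'$, the trace down from the level-$m$ Lubin--Tate field to $k_m$, the degree-$p^m$ layer of the anticyclotomic tower.

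The trace relations (1) and (2) then follow from the distribution relation for torsion points:
\[
\sum_{\zeta\in\mathcal F[p]}\big(\pi_{j}\,[+]_{\mathcal F}\,\zeta\big)=[p](\pi_{j-1})\text{-type identities}.
\]
In the height-two supersingular setting, where $[p]$ on $\hat E$ equals $-\varphi^2$ up to an isomorphism, this produces $\Tr_{k_m/k_{m-1}}d_m=-d_{m-2}$. The case $m=1$ gives $\Tr_{k_1/k_0}d_1=-d_0$, because the bottom layer has no $m-2$ term and the unramified trace contributes the sign. To verify this I will compute with logarithms: I check the identity on $\log_{\hat E}$, which is injective on $\hat E(\mathfrak m_{k_m})$ since $p\ge5$ and $\hat E$ is torsion-free there by Lemma~\ref{lem:T-no-invariant}.

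For (3), I compute $d_0$ modulo $p\hat E(k_0)$. The group $\hat E(\ZZ_{p^2})=\hat E(p\ZZ_{p^2})\cong p\ZZ_{p^2}$ via the logarithm, so it suffices to show that $\log d_0\notin p^2\ZZ_{p^2}$. By construction $\log d_0$ equals $p\cdot u$ for an explicit $u$ involving $\Tr(\epsilon)$ and an appropriate choice of $\epsilon$. I will choose $\epsilon$ with unit trace, or a generator of $\ZZ_{p^2}$ over $\ZZ_p$, to make $u$ a unit.

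The main obstacle is the middle step: making the anticyclotomic tower explicit in terms of Lubin--Tate torsion and descending the traces correctly. The torsion points naturally generate the full $\ZZ_{p^2}^\times$-tower, so the trace from it to $k_m$ must be compatible with the distribution relations, and the signs $(-1)$ must come out exactly. If this direct approach is unwieldy, I would instead simply cite \cite[\S3]{BKO1} and \cite{BKO2}, where these points are constructed, and only verify (3) via the logarithm computation.
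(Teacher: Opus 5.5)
\textbf{Verdict.} The paper gives no argument for this statement beyond quoting \cite[Theorem~5.5]{BKO1}. Your fallback of citing Burungale--Kobayashi--Ota is therefore exactly the paper's proof; the precise reference is Theorem~5.5 of \cite{BKO1}. No separate check of (3) is needed, since the non-divisibility of $d_0$ is part of that theorem. Two of your side remarks are correct:
\begin{itemize}
\item the reduction of (3) to $\log_{\hat E} d_0\notin p^2\ZZ_{p^2}$;
\item the description of $k_\infty$ as the fixed field of $\ZZ_p^\times\mu_{p^2-1}$ in the Lubin--Tate tower of $\QQ_{p^2}$.
\end{itemize}

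\textbf{The self-contained construction has a genuine gap.} It does not work as written, and the missing part is exactly the content of the cited theorem.

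\emph{It is not well defined.} $d_m'=\sum_{k\ge0}(-1)^k\varphi^{-(m+k)}(\epsilon)\ast\pi_{m+2k}$ uses $\pi_{m+2k}$, which lives at level $m+2k$. So $d_m'$ is not a point over the level-$m$ Lubin--Tate field, and $\Tr_{\mathrm{LT}_m/k_m}d_m'$ is undefined. A Kobayashi-type point at level $m$ is a finite expression in torsion points of level $\le m$.

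\emph{You cannot just add torsion points in $\hat E$.} Since $a_p=0$, $\hat E$ has Honda type $X^2+p$ and is isomorphic, already over $\ZZ_p$, to the Lubin--Tate group of $\QQ_{p^2}$ for $-p$. If the $\pi_j$ are, or are transported to, torsion points of $\hat E$, any $\hat E$-sum of them is torsion. Its trace to $k_m$ is then $0$, because $\hat E(\mathfrak{m}_{k_m})$ is torsion-free by Lemma~\ref{lem:T-no-invariant}. This would force $d_0=0$, contradicting (3). If instead the $\pi_j$ are only elements of the maximal ideal added with $\hat E$'s law, nothing controls their traces. What is needed is to form weighted combinations $\sum_k(-p)^{-k}g([\pi]^k\omega)$ at the level of logarithms, and to use Hazewinkel's functional-equation lemma to show that $\exp_{\hat E}$ of them is integral. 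None of this appears in your sketch.

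\emph{The descent to the anticyclotomic tower does not follow from distribution relations.}
\begin{itemize}
\item The relation you quote is wrong: the $\mathcal F$-sum of $\pi_j[+]\zeta$ over $\zeta\in\mathcal F[p]$ is $[p^2]\pi_j$. The relevant input is the field trace of torsion points.
\item Degrees do not match: $[\mathrm{LT}_m:\mathrm{LT}_{m-1}]=p^2$ while $[k_m:k_{m-1}]=p$. Hence $\Tr_{\mathrm{LT}_m/k_m}$ multiplies a level-$j$ element by $p^{m-j}$.
\item For example, the natural points with $\log c_j=\sum_k(-p)^{-k}\omega_{j-k}$ satisfy $\Tr c_j=[-p]c_{j-1}$ on the Lubin--Tate tower. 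They descend to a $[-p]$-compatible system, not to one with $\Tr_{k_m/k_{m-1}}d_m=-d_{m-2}$.
\item Worse, with $[\pi](X)=\pi X+X^{p^2}$ one has $[\zeta](X)=\zeta X$ for $\zeta\in\mu_{p^2-1}\subset\Gal(\mathrm{LT}_m/k_m)$. So the trace to $k_m$ kills every $\ZZ_{p^2}$-linear combination of torsion points.
\end{itemize}

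\emph{The boundary relation and (3) are not established.} Your justification of (2), ``the unramified trace contributes the sign,'' cannot be right, since $k_1/k_0$ is totally ramified. The computation of $\log d_0$ needed for (3) is never actually carried out.

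As a standalone proof the proposal therefore has a gap at its core; only the citation route is valid.
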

\begin{proof}
This is \cite[Theorem~5.5]{BKO1}. 
\end{proof}

Define
\[
d_m^+=\begin{cases}
d_m&\textrm{if $m$ is even},\\
d_{m-1}&\textrm{if $m$ is odd},
\end{cases}\qquad d_m^-=\begin{cases}
d_{m-1}&\textrm{if $m\ge2$ is even},\\
d_{m}&\textrm{if $m$ is odd}.
\end{cases}\quad
\]
Let $\hat \cA_f^\pm(k_m)$ be the $\Lambda'$-modules generated by $d_m^\pm$. These modules can be described in terms of the trace maps, as in \cite[Definition~8.16]{kobayashi03}.
We may regard $\hat \cA_f^\pm(k_m)$ as subgroups of $H^1_\f(k_m,T)$ via the Kummer map.

We define $\hat \cA_f^\pm(k_m)^\perp\supset H^1_\f(k_m,T)$ to be the orthogonal complement of $\hat \cA_f^\pm(k_m)$ under the local Tate pairing. Then the kernels of $\col^{\sharp/\flat}$ can be realized as $\displaystyle\varprojlim_m \hat \cA_f^\pm(k_m)^\perp$ respectively (see \cite[Proposition~4.30]{BBL1}).  In particular, $\ker(\col^\bullet)$ is self-orthogonal for $\bullet\in\{\sharp,\flat\}$ following \cite[Proposition~3.15]{kim07} and \cite[Theorem~2.9]{kim14};  see also \cite[\S8.1]{kataoka} and \cite[Lemma~5.6]{BLV}.

\subsection{Signed Selmer groups and Kolyvagin systems}
In this section, we recall the definition of signed Selmer groups. We also introduce signed Shafarevich--Tate groups, similar to those studied in \cite{HLV,lei-fine}.
\begin{de}
If $\ell\in\cN$ and $M$ is a $G_K$-representation, we define
\[
H^1_\tr(K_\lambda,M)=\ker\left(H^1(K_\lambda,M)\to H^1(K[\ell]_{\lambda'},M)\right),
\]
where $\lambda$ is the unique place of $K$ lying above $\ell$, and $\lambda'$ is a  prime of $K[\ell]$ above $\lambda$. Also recall that, if $w\nmid p$, we have the Bloch--Kato finite subgroup $H^1_\f(K,M)$, the singular quotient $H^1_{\rm s}(K,M)$ \cite[\S1.1]{howard04}, and a finite-singular isomorphism for $\ell\in \cL$ and $M$ a subquotient of $\TT$:
\[
  \phi^{\rm fs}_\ell: H^1_\f(K_\ell,M)\to H^1_{\rm s}(K_\ell,M).
\]

For $\bullet\in\{\sharp,\flat\}$, we define the \textit{signed Selmer group} $H^1_{\cF^\bullet(n)}(K,\TT/I_n\TT)$ as
\[
    \ker\left(H^1(K,\TT/I_n\TT)\to\prod_{w}\frac{ H^1(K_w,\TT/I_n\TT)}{H^1_{\cF^\bullet(n)}(K_w,\TT/I_n\TT)}\right),
\]
where
\[
    H^1_{\cF^\bullet(n)}(K_w,\TT/I_n\TT) = 
    \begin{cases}
        H^1_\bullet(K_w,\TT/I_n\TT) & w|p;\\
        H^1_\tr(K_w,\TT/I_n\TT) & w|n;\\
        H^1_\f(K_w,\TT/I_n\TT) & w\nmid np.
    \end{cases}
\]
Furthermore, if $\sT$ is a quotient (resp.~subrepresentation) of $\TT/I_n\TT$, we propagate the Selmer condition $\cF^\bullet(n)$ to $\sT$ by dictating $H^1_{\cF^\bullet(n)}(K_w,\sT)$ to be the image (resp.~preimage) of $H^1_{\cF^\bullet(n)}(K_w,\TT/I_n\TT)$.
\end{de}
\begin{rem}\label{rem:selmer-comparison}
    We note that $H^1_{\bullet}(K_w,T) = H^1_\f(K_w,T)$ for $w\mid p$, so $\Sel_\bullet(K,T)$ coincides with the usual Bloch--Kato Selmer group $\Sel(K,T)$. This is due to the colinearity (the deduction is similar to \cite[Remark 5.8]{LLZ11})
    \[
        (p-1)\col^\sharp + (a_p-2)\col^\flat \equiv 0\bmod X,
    \]
    so $H^1_\sharp(K_w,T) = H^1_\flat(K_w,T) = \ker(\cL_w \bmod X)$. Now, as the Perrin-Riou regulator $\cL_w$ specializes to the dual exponential modulo $X$
    \[
      (1-\varphi)\left(1-p^{-1}\varphi^{-1}\right)^{-1}\circ  \exp^* \colon H^1(K_w,T) \to \Dcris(V),
    \]
    and $(1-\varphi)\left(1-p^{-1}\varphi^{-1}\right)^{-1}$ is invertible on $\Dcris(V)$,
    we see that the kernel is exactly $H^1_\f(K_w,T)$ by \cite[Definition 3.10, Corollary 3.8.4]{bloch-kato:l-fcts-tamagawa-numbers}.
\end{rem}

\begin{lem}\label{lem:p-power-kolyvagin}
There exists an integer $d$ that is independent of $n$ such that $$\varpi^d\bka_n^\bullet\in H^1_{\cF^\bullet(n)}(K,\TT/I_n\TT)$$
    for both $\bullet\in\{\sharp,\flat\}$.
\end{lem}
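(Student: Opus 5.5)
We check the local conditions defining $H^1_{\cF^\bullet(n)}(K,\TT/I_n\TT)$ place by place, treating three kinds of places separately: $w\nmid np$, $w\mid n$, and $w\mid p$. The only place where a power of $\varpi$ should be needed is $w\nmid np$, coming from Tamagawa-type defects at primes dividing $N_0$. The goal is a uniform exponent $d$ that depends on $T$ (and $K$) but not on $n$.

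\textbf{Places $w\nmid np$.} The classes $\tilde\bka_n^\bullet$ are built from Kummer images of Heegner points $P[np^{k+1}]$ under the operators $\sum_{\sigma\in S_n}\sigma D_n$, followed by taking the $\ker H_k$-quotients and passing to the limit. These operations are $\Lambda$-linear combinations of Galois conjugates and corestrictions. Kummer images are finite (unramified) at primes $v\nmid pN_0$, so at such $w$ the class $\bka_n^\bullet$ is finite. One subtlety is descending from $K[n]$ to $K$ via Corollary~\ref{cor:iso-rest}; this is handled exactly as in \cite[Lemma~1.7.3]{howard04}.

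For $w\mid N_0$, the finite condition on $\TT/I_n\TT$ is $H^1_\f(K_w,\cdot)$, the unramified classes propagated from $\TT$. Here $H^1(K_w^{\ur},\cdot)$ need not vanish, but it is bounded by $H^1(I_w,T)_{\tors}$, which is governed by the Tamagawa factor of $\cA_f$ at $w$. The $p$-part of this factor is finite and independent of $n$. We also use that $K_\infty/K$ is unramified at $w$ and that $w$ splits finitely in $K_\infty$, since primes dividing $N^+$ split in $K$ and are finitely decomposed in the anticyclotomic tower. Choosing $d$ to be the maximum over $w\mid N_0$ of the $\varpi$-exponents of these Tamagawa-type groups, we get that $\varpi^d$ kills the obstruction.

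\textbf{Places $w\mid n$.} We need $\bka_n^\bullet$ to be transverse. This follows from the standard computation that the restriction of $\sigma D_n P[n]$ to $K[\ell]_{\lambda'}$ is divisible appropriately in $\TT/I_n\TT$, as in \cite[Lemma~1.7.1]{howard04} and \cite{gross-durham}. No power of $\varpi$ is needed.

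\textbf{Places $w\mid p$.} We must show $\col^\bullet_{[n]}(\bka_n^\bullet)=0$. By the construction in Theorem~\ref{thm_main_sharpflat_Heegner_ES}, the local components of the signed classes at $p$ are, at every finite level, the signed decompositions of Kummer images of points. These lie in $H^1_\f$ and are built from the $Q$-system, so their Coleman images vanish. The argument is the same as in \cite[\S4]{BBL1} and \cite{BBL2} for $n=1$. The operator $\sum\sigma D_n$ commutes with $\col_{[n]}^\bullet$, and reduction modulo $I_n$ is compatible with it.

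The main obstacle is this last step. In the inert case the $Q$-system comes from the points $d_m$ of Theorem~\ref{thm:Q-inert}, and we need that the local Heegner points at level $k$ lie in $\hat\cA_f^\pm(k_m)^\perp$ up to a bounded error. We would try to prove this using the norm relations \eqref{eq:norm npk} together with the trace relations of Theorem~\ref{thm:Q-inert}. Any bounded denominators that appear, for instance from $\delta$, $u_K$ or the index of the formal group in $\cA_f(k_m)$, are independent of $n$ and can be absorbed into $d$.
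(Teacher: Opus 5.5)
The paper proves this lemma only by citing the argument of \cite[Lemma~A.2]{castella-wan24}. Your place-by-place outline has the right overall shape, and your treatment away from $p$ (bounded Tamagawa-type defects at $w\mid N^+$, transversality at $w\mid n$) is reasonable in outline, with one caveat noted at the end. There is, however, a genuine gap at $w\mid p$. This is the heart of the lemma, and the reason you give there does not work.

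The components $P[np^{k+1}]^\bullet$ of $\tilde\PP_n^\bullet$ are not Kummer images of points, nor $\Lambda$-linear combinations of them. They are solutions, modulo $\ker H_k$, of $H_k(x,y)^{\top}=(P[np^{k+1}],-\res P[np^k])^{\top}$. The facts you invoke are that Kummer images lie in $H^1_\f$, that the $Q$-system lies in $H^1_\f$, and that $H^1_\f$ is isotropic. These only show that the \emph{unsigned} combinations $H_k(P^\sharp,P^\flat)^{\top}$ pair trivially with the local points, and they are symmetric in $\sharp$ and $\flat$. If they sufficed, they would equally give $\col^\flat(\loc_p\PP_n^\sharp)=0$, so $\loc_p\PP_n^\sharp$ would lie in $\ker\col^\sharp\cap\ker\col^\flat$. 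That intersection is zero here: it consists of universal norms from the formal group, which vanish for supersingular reduction along a ramified $\Z_p$-extension. So your argument proves too much. For the same reason, what you single out as the main obstacle is automatic and beside the point. Local Heegner points lie in $H^1_\f\subseteq\hat\cA_f^\pm(k_m)^\perp$ trivially; what must be shown is that the \emph{signed} components do.

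The missing idea is that the signed decomposition of the Heegner points and the one defining $\col^\bullet$ are governed by the same matrices $C_k$. On one side this comes from the norm relation \eqref{eq:norm npk}, on the other from the trace relations of the $Q$-system, and the sign is then selected by a coprimality argument. For example, if $a_p=0$ and $k$ is even, then $H_k=\pm\,\mathrm{diag}(\Phi_1\Phi_3\cdots\Phi_{k-1},\,\Phi_2\Phi_4\cdots\Phi_k)$. Hence $\Phi_1\Phi_3\cdots\Phi_{k-1}\cdot P[np^{k+1}]^\sharp=\pm P[np^{k+1}]$ lies in $H^1_\f$. Meanwhile Theorem~\ref{thm:Q-inert} shows that $d_k$ is killed by $X\Phi_2\Phi_4\cdots\Phi_k$. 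So the group-ring-valued pairing of $P[np^{k+1}]^\sharp$ against the conjugates of $d_k$ is killed by two coprime factors whose product is $(1+X)^{p^k}-1$, and therefore vanishes. The ambiguity by $\ker H_k$ is harmless for the same reason. This is the $n=1$ argument run over $K[n]$; in the split case the analogous input is the factorization of $\cL_w$ through $M_{\log}$.

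You would then still need to descend from $K[n]$ to $K$ at $p$. The map $\col^\bullet$ acts on $H^1(K_w,-)$, not on $H^1(K[n],-)$ where $\sum_\sigma\sigma D_n$ lives. So you need Coleman maps over the unramified completions $K[n]_{w'}$ that are compatible with restriction from $K_w$. A corestriction trick is unavailable, because $[K[n]_{w'}:K_w]$ may be divisible by $p$ (recall $p\mid\ell+1$).

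The caveat: a milder form of the same oversight affects your treatment of places $w\nmid p$, where you also treat the signed classes as combinations of Kummer images. At primes inert in $K$, which split completely in the tower (including those dividing $n$), you must check that the relevant singular or finite projections, being killed by $H_k$, vanish in the inverse limit modulo $\ker H_k$. This is routine, but it is not what you wrote.
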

\begin{proof}
    This follows from the same proof as \cite[Lemma~A.2]{castella-wan24}.
\end{proof}

Next, recall from \cite[\S1.2]{howard04} that given a $p$-adic coefficient ring $\sR$, a $\sR$-linear $G_K$-representation $\sT$ finite free over $\sR$, a Selmer condition $\sF$ on $\sT$, and a subset $\sL\subset \cL$ containing $\cL^{(e)}=\{\ell\in \cL\colon a_\ell\equiv \ell+1\equiv 0\bmod \varpi^e\}$ for some $e\ge 1$, a Kolyvagin system for the triple $(\sT,\sF,\sL)$ is a set of classes $\{\kappa_n\}_n$, where $n$ ranges over
\[
    \cN(\sL) = \{\text{squarefree product of primes in }\sL\},
\]
such that
\begin{enumerate}
    \item[(i)] $\kappa_n\in H^1_{\sF(n)}(K,T/I_nT)$;
    \item[(ii)] for all $n\ell\in \cN(\sL)$,
    \[
        \loc_\ell(\kappa_{n\ell}) = \phi^{\rm fs}_\ell \circ\loc_\ell(\kappa_n).
    \]
    Here, we have identified $H^1_{\tr}(K_\ell,M)$ with $H^1_{\rm s}(K_\ell,M)$  by Proposition 1.1.9, \textit{ibid.}.
\end{enumerate}
\begin{prop}\label{prop:kappa-kolyvagin}
    There exists $d\in \Z_{\ge 0}$ such that both collections of cohomology classes $\{\varpi^d\bka^\bullet_n\}_{n\in \cN}$ with $\bullet\in \{\sharp,\flat\}$ are Kolyvagin systems for $(\TT,\cF_\bullet,\cL)$.
\end{prop}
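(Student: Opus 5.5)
Take $d$ to be the integer of Lemma~\ref{lem:p-power-kolyvagin}; it is independent of $n$ and works for both signs. Condition (i) of the definition of a Kolyvagin system, $\varpi^d\bka_n^\bullet\in H^1_{\cF^\bullet(n)}(K,\TT/I_n\TT)$, is then exactly the content of that lemma. Two small points need care. First, the lemma is stated for $\cL$, so we take $\sL=\cL$, which contains $\cL^{(e)}$ for $e=1$. Second, the local condition at $p$ is the signed condition $\ker\col^\bullet_{[n]}$; it is compatible with the reduction maps $\TT/I_{n\ell}\TT\to\TT/I_n\TT$ because the Coleman maps $\col_{[n]}^\bullet$ come from a $Q$-system and are compatible as $n$ varies.

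The substance is condition (ii): for $n\ell\in\cN$,
\[
\loc_\ell(\varpi^d\bka_{n\ell}^\bullet)=\phi^{\rm fs}_\ell\bigl(\loc_\ell(\varpi^d\bka_n^\bullet)\bigr)
\]
in $H^1_{\rm s}(K_\ell,\TT/I_{n\ell}\TT)$. Since $\varpi^d$ is a scalar, it suffices to prove the relation for $\bka^\bullet_n$ itself. I would follow Kolyvagin's classical argument as presented in \cite[\S1.7]{howard04} (for the $\Lambda$-adic version) and \cite{howard04-gl2}, applied to the system $\{\tilde\PP_n^\bullet\}$ in place of Heegner points. The inputs that argument needs are:
\begin{itemize}
\item[(a)] the norm relation $\Cor_{K[n\ell]/K[n]}\PP^\bullet_{n\ell}=a_\ell\PP^\bullet_n$ in $H^1(K[n],\TT)$;
\item[(b)] the congruence $\PP^\bullet_{n\ell}\equiv \mathrm{Frob}_\lambda\PP^\bullet_n$ locally at primes above $\ell$, modulo the appropriate ideal;
\item[(c)] the fact that $\PP^\bullet_{n\ell}$ is unramified away from $np$, with $\PP^\bullet_n$ finite at $\ell$.
\end{itemize}

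I would prove (a) at each finite level $k$ by applying \eqref{eq:norm-l} at conductor $n\ell p^{k+1}$ and then using the uniqueness in Theorem~\ref{thm_main_sharpflat_Heegner_ES}. The trace $\Tr_{K[n\ell p^{k+1}]/K[np^{k+1}]}$ commutes with $H_k$ and with restriction from conductor $np^k$. Hence $\Cor(P[n\ell p^{k+1}]^\bullet)$ and $a_\ell P[np^{k+1}]^\bullet$ have the same image under $H_k$, and they therefore agree modulo $\ker H_k$. Passing to the inverse limit and corestricting to $K[n\ell]$ gives (a). For (b), I would use the Eichler--Shimura congruence relation for the Heegner points $P[n\ell p^{k+1}]$ at primes above $\ell$. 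The operation of taking signed components is an $\tilde\Lambda$-linear operation on pairs of classes modulo $\ker H_k$, and $\mathrm{Frob}_\lambda$ commutes with $H_k$, so the congruence transfers to the signed components by the same uniqueness argument. Fact (c) holds because the $\PP^\bullet$ are built out of Kummer images of points and are therefore unramified at $\ell\nmid np$.

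Given (a)--(c), the standard computation of \cite[Proposition 1.7.3]{howard04} (the derivative identity $(\sigma_\ell-1)D_\ell=\ell+1-\Tr$) expresses the singular part of $\loc_\ell\tilde\bka^\bullet_{n\ell}$ in terms of $\loc_\ell\bka^\bullet_n$ through $\phi^{\rm fs}_\ell$. Descent from $K[n\ell]$ to $K$ uses Corollary~\ref{cor:iso-rest} together with Lemma~\ref{lem:T-no-invariant}.

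I expect the main obstacle to be (b) together with the descent, because the signed classes are defined only modulo $\ker(H_k)$ rather than as honest classes. I would handle this by working at each finite layer with $\tilde\Lambda_k$-coefficients modulo $\ker H_k$, checking that $\loc_\ell$, $\phi^{\rm fs}_\ell$ and $D_n$ all commute with multiplication by $H_k$, and only then passing to the limit using $\tilde\Lambda^2\cong\varprojlim\tilde\Lambda^2_k/\ker H_k$.
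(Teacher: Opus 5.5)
Your proposal is correct and follows the same route as the paper. Condition (i) is exactly Lemma~\ref{lem:p-power-kolyvagin}, and the relation (ii) comes from running the argument of \cite[Proposition 1.7.4]{howard04}, with the norm relation \eqref{eq:norm-l} and the Eichler--Shimura congruence $P[\ell np^k]\equiv \operatorname{Frob}\,P[np^k]$ at each finite layer as inputs. Your explicit transfer of these relations to the signed classes modulo $\ker H_k$ supplies detail the paper leaves implicit. For the local congruence at $\ell$, the injectivity you need is that of $H_k$ modulo $\ker H_k$ on the semilocal cohomology at $\ell$, not just global uniqueness, and it holds because that module is induced from $K_\ell$.
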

\begin{proof}
    Thanks to Lemma \ref{lem:p-power-kolyvagin}, it suffices to verify the compatibility
    \[
        \loc_\ell(\varpi^d\bka^\bullet_{n\ell}) = \phi^{\rm fs}_\ell \circ\loc_\ell(\varpi^d\bka^\bullet_n).
    \]
    This can be shown in the way similar to \cite[Proposition 1.7.4]{howard04}; note that the relation 
    \[
    P[\ell n p^k]\equiv \operatorname{Frob}(\lambda_{ np^k}) P[np^k]\mod \lambda_{\ell n p^k},
    \]
    where $\lambda_\star$ denotes a prime of $K[\star]$ above $\ell$,
    used therein is a consequence of \eqref{eq:norm-l} and the Eichler--Shimura congruence relation \cite[\S8.7]{diamond-shurman}.
\end{proof}
We have the local Tate pairing
\[
H^1(K_w,\TT)\times \prod_{\lambda|w}H^1(K_{\infty,\lambda},A)\to F/\cO.
\]

Let $H^1_\bullet(K_{\infty,\lambda},A)\subset H^1(K_{\infty,\lambda},A)$ so that $\prod_{\lambda|w}H^1_\bullet(K_{\infty,\lambda},A)$ is the orthogonal complement of $H^1_\bullet(K_w,\TT)$ under the local Tate pairing.
We define the discrete and compact signed Selmer groups as follows.

\begin{de}
For $\bullet\in\{\sharp,\flat\}$, we define the discrete signed Selmer group $\Sel_\bullet(K_\infty,A)$ as the kernel of the localization map
\[
H^1(K_\infty,A)\to\prod_{\lambda\nmid p}\frac{H^1(K_{\infty,\lambda},A)}{H^1_\mathbf{f}(K_{\infty,\lambda},A)}\times\prod_{\lambda|p} \frac{H^1(K_{\infty,\lambda},A)}{H^1_\bullet(K_{\infty,\lambda},A)},
\]
and the compact Selmer group $\Sel_\bullet(K,\TT)$ as the kernel of the localization map
\[
H^1(K,\TT)\to\prod_{w\nmid p}\frac{H^1(K_{w},\TT)}{H^1_\mathbf{f}(K_{w},\TT)}\times\prod_{w|p} \frac{H^1(K_{w},\TT)}{H^1_\bullet(K_w,\TT)}.
\]

Given a character $\alpha:\Gamma\to\cO^\times$ and a prime $w$ of $K$ lying above $p$, let $H^1_\bullet(K_w,T_{\alpha^{-1}})\subset H^1(K_w,T_{\alpha^{-1}})$ denote the image of $H^1_\bullet(K_w,\TT)$ under the specialization map $H^1(K_w,\TT)\to H^1(K_w,T_{\alpha^{-1}})$. We then define the signed Selmer group of $T_{\alpha^{-1}}$ over $K$, denoted by $\Sel_\bullet(K,T_{\alpha^{-1}})$, as the kernel of 
\[
H^1(K,T_{\alpha^{-1}})\to\prod_{w\nmid p}\frac{H^1(K_{w},T_{\alpha^{-1}})}{H^1_\mathbf{f}(K_{w},T_{\alpha^{-1}})}\times\prod_{w|p} \frac{H^1(K_{w},T_{\alpha^{-1}})}{H^1_\bullet(K_w,T_{\alpha^{-1}})}.
\]

Let $H^1_\bullet(K_w,A_\alpha)\subset H^1(K_w,A_\alpha)$ be the orthogonal complement of $H^1_\bullet(K_w,T_{\alpha^{-1}})$ under the local Tate pairing
\[
H^1(K_w,T_{\alpha^{-1}})\times H^1(K_w,A_{\alpha})\to F/\cO.
\]
We define the signed Selmer group of $A_\alpha$ over $K$, denoted by $\Sel_\bullet(K,A_\alpha)$, as the kernel of 
\[
H^1(K,A_{\alpha})\to\prod_{w\nmid p}\frac{H^1(K_{w},A_{\alpha})}{H^1_\mathbf{f}(K_{w},A_{\alpha})}\times\prod_{w|p} \frac{H^1(K_{w},A_{\alpha})}{H^1_\bullet(K_w,A_\alpha)}.
\]
\end{de}

We recall the following signed Iwasawa main conjectures that were studied in \cite{BBL2,BLV}.
\begin{conj}\label{conj:main}
    For $\bullet\in\{\sharp,\flat\}$, the $\Lambda$-module $\Sel_\bullet(K_\infty,A)^\vee$ is of rank one. Furthermore, 
    \[
        \Char_{\Lambda}\left(\Sel_\bullet(K,\TT)/\Lambda\cdot \bka_1^\bullet\right)^2 \subseteq \Char_{\Lambda}\left(\Sel_\bullet(K_\infty,A)^\vee_\mathrm{tor}\right)
    \]
    as ideals of $\Lambda\otimes\QQ_p$. Moreover, if $p$ is either split in $K$ or $\bullet = \flat$, the inclusion above is an equality.
\end{conj}

Finally, we define the signed Shafarevich--Tate groups, which will play a crucial role in subsequent sections.
\begin{de}
For $\bullet\in\{\sharp,\flat\}$ and a character $\alpha:\Gamma\to\cO^\times$, we define the signed Shafarevich--Tate group of $A_\alpha$ as
\[
\Sha_\bullet(K,A_\alpha)=\Sel_\bullet(K,A_\alpha)/\Sel_\bullet(K,A_\alpha)_\div,
\]
where $\Sel_\bullet(K,A_\alpha)_\div$ denotes the maximal divisible subgroup of $\Sel_\bullet(K,A_\alpha)$.
\end{de}
Note that $\Sha_\bullet(K,A_\alpha)$ has finite cardinality by definition.

\section{Calculating Euler characteristics for signed Selmer groups}
	The main goal of this section is to calculate the Euler characteristics of Selmer modules $\Sel_{\bullet}(K_\infty,A)$ for $\bullet\in\{\sharp,\flat\}$, which will be used in the proof of Theorem~\ref{thmA}. Since they are not expected to be torsion in light of Conjecture~\ref{conj:main}, this renders the standard approach of Greenberg \cite[\S4]{greenberg-cetraro} inapplicable. We develop a novel method that works for Selmer modules of arbitrary $\Lambda$-coranks, at the price of working with various character twists. As in the previous sections, $V$ is the 2-dimensional Galois representation attached to $f$, $T$ a fixed Galois-stable lattice, and $A = V/T$. Throughout, we fix a non-trivial character $\alpha: \Gamma\to \cO^\times$. Set $m\in \ZZ_{\ge 0}$ to be the largest integer such that $\alpha\equiv 1\bmod \varpi^m$.

	The main result of this section is the following theorem.
	\begin{thm}\label{thm:euler-char}
		Let $\bullet\in\{\sharp,\flat\}$, and let $\cF_\bullet(X)$ be a generator of $\Char((\Sel_\bullet(K_\infty,A)^\vee)_{\tors})$, where $(\Sel_\bullet(K_\infty,A)^\vee)_\tors$ denotes the maximal $\Lambda$-torsion submodule of $\Sel_\bullet(K_\infty,A)^\vee$. For all but finitely many $\alpha$, we have
		\[
			|\cO/\cF_\bullet(\alpha(\gamma)-1)| \le |\Sha_\bullet(K,A_\alpha)| \prod_{w\mid N^+}c_w^{(p)}.
		\]
		Here, $c^{(p)}_w = \#H^1_\ur(K_w,A)<\infty$ is the $p$-part of the local Tamagawa number.
	\end{thm}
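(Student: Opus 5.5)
We twist by $\alpha$ and compare the $\alpha$-specialization of $\Sel_\bullet(K_\infty,A)$ with $\Sel_\bullet(K,A_\alpha)$, arguing separately for the free part and the torsion part of $X:=\Sel_\bullet(K_\infty,A)^\vee$. Write $\Lambda$-rank $X = r$ and fix a pseudo-isomorphism
\[
X\to \Lambda^r\oplus\bigoplus_i \Lambda/(g_i),
\]
with finite kernel and cokernel. For all but finitely many $\alpha$, the element $P_\alpha:=\gamma-\alpha(\gamma)$ (more precisely its image under the twist identifying $\Lambda$-modules twisted by $\alpha$) is coprime to $\cF_\bullet$ and to the support of these finite kernels. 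We also require that $P_\alpha$ avoid the finitely many height-one primes where the local modules at $p$ or the relevant $H^0$'s misbehave.

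\emph{Step 1: a control map.} There is a natural map
\[
s_\alpha\colon \Sel_\bullet(K,A_\alpha)\to \Sel_\bullet(K_\infty,A\otimes\alpha)^{\Gamma}=\Sel_\bullet(K_\infty,A)[P_\alpha],
\]
up to the usual twist identification. I would show that its kernel is trivial, or at least uniformly bounded, for almost all $\alpha$, using $H^0(K_\infty,A)=0$; this follows from Lemma \ref{lem:T-no-invariant}-type arguments together with irreducibility. I would then bound its cokernel by a product of local terms. At $w\nmid p$ with $w\nmid N^+$ the local term vanishes: primes dividing $N^-$ are inert and split completely in $K_\infty$, and their contribution is trivial for generic $\alpha$. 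At $w\mid N^+$ the local term is bounded by $c_w^{(p)}=\#H^1_\ur(K_w,A)$, which produces the Tamagawa factor. At $w\mid p$ the local conditions are defined as images of $H^1_\bullet(K_w,\TT)$ under specialization, together with orthogonal complements. Combining this with the self-orthogonality of $\ker\col^\bullet$ from \S\ref{S:orth-split} and \S\ref{S:orth-inert}, the local condition at $p$ for $A_\alpha$ should be exactly the $P_\alpha$-torsion of the $\Lambda$-adic condition, so this local term is trivial for almost all $\alpha$. The reason is that $H^1_\bullet(K_w,\TT)$ is $\Lambda$-free of rank $[K_w:\Q_p]$ and its quotient in $H^1(K_w,\TT)$ is torsion-free away from finitely many primes. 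This step needs the most care, because we cannot invoke Greenberg's surjectivity of the global-to-local map, which relies on cotorsionness.

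\emph{Step 2: algebra for arbitrary rank.} For a finitely generated $\Lambda$-module $X$ as above and generic $P_\alpha$, the quotient $X/P_\alpha X$ is an $\cO$-module of rank $r$, and its torsion subgroup has order equal to $|\cO/\cF_\bullet(\alpha(\gamma)-1)|$. This holds because $\Lambda^r/P_\alpha$ is free, $\Lambda/(g_i,P_\alpha)$ has order $|\cO/g_i(\alpha(\gamma)-1)|$, and the finite kernel and cokernel of the pseudo-isomorphism do not interfere once $P_\alpha$ is coprime to their supports. Dualizing, $\Sel_\bullet(K_\infty,A)[P_\alpha]$ has corank $r$, and its quotient by the maximal divisible subgroup has order $|\cO/\cF_\bullet(\alpha(\gamma)-1)|$.

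\emph{Step 3: comparison.} From Step 1, $s_\alpha$ is injective with cokernel of order at most $\prod_{w\mid N^+}c_w^{(p)}$. Both groups are cofinitely generated, so the image of the divisible part lands in the divisible part. Passing to quotients by maximal divisible subgroups, the image of $\Sha_\bullet(K,A_\alpha)$ has cokernel in $\Sel_\bullet(K_\infty,A)[P_\alpha]/\mathrm{div}$ of order at most the cokernel of $s_\alpha$. Indeed, for an injection $M\hookrightarrow N$ with finite cokernel, $M_\div = N_\div$, which gives $|N/N_\div|\le |M/M_\div|\cdot|\coker|$. This is the asserted inequality.

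The main obstacle is Step 1 at $p$: the precise identification of $H^1_\bullet(K_w,A_\alpha)$ with the $P_\alpha$-torsion of the $\Lambda$-adic signed condition on the discrete side.
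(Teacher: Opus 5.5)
Your overall route matches the paper's. You use the injective control map $s_\alpha$ (the paper's $v_\alpha$), the bound $\coker(v_\alpha)\cong\ker(g)\subseteq\prod_{w\mid Np}\ker(r_w)$, and a place-by-place local analysis. Your Step~3 is a tidier packaging of the diagram in Proposition~\ref{prop:sel-sha}: it gives $|N/N_\div|=|M/M_\div|\cdot|\coker(s_\alpha)|$ directly, with $M=\Sel_\bullet(K,A_\alpha)$ and $N=\Sel_\bullet(K_\infty,A)[P_\alpha]$.

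\textbf{Step~2 is false as stated.} Finite $\Lambda$-modules are supported at the maximal ideal $\mathfrak m=(\varpi,\gamma-1)$. Every $P_\alpha$ lies in $\mathfrak m$, since $\alpha(\gamma)\equiv 1\bmod\varpi$. Hence $P_\alpha$ acts nilpotently on any finite module $Z$, and $Z[P_\alpha]\neq 0\neq Z/P_\alpha Z$ whenever $Z\neq0$; no choice of $\alpha$ makes them ``not interfere''. For example, $Z=\FF$ has $\cF=1$ but $Z/P_\alpha Z=\FF$. Likewise, the torsion-free rank-one module $\mathfrak m\subset\Lambda$ has $(\mathfrak m/P_\alpha\mathfrak m)_\tors\cong\FF$.

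The correct computation runs as follows. Put $\cX=X_\tors$ and take $0\to X/\cX\to\Lambda^r\to Q\to0$ with $Q$ finite. Then $0\to\cX/P_\alpha\to X/P_\alpha X\to(X/\cX)/P_\alpha\to0$ is exact, and the torsion of the last term is $Q[P_\alpha]$. Hence
\[
|(X/P_\alpha X)_\tors|=|\cO/\cF_\bullet(\alpha(\gamma)-1)|\cdot|\cX[P_\alpha]|\cdot|Q[P_\alpha]|.
\]
The error goes in the harmless direction. You only need $|\cO/\cF_\bullet(\alpha(\gamma)-1)|\le|N/N_\div|$, and this is exactly what the paper uses: Lemma~\ref{lem:euler-char} discards $|\cX[\fP_{\alpha^{-1}}]|$, and Proposition~\ref{prop:sel-sha} discards $|Q^\Gamma|$. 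Replace ``$=$'' by ``$\ge$'' with this justification, and your Steps~2--3 prove the theorem.

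\textbf{On Step~1.} The place you single out as the main obstacle is actually immediate. Freeness, torsion-freeness and self-orthogonality are not needed there; self-orthogonality enters only through (H4) in Theorem~\ref{thm:howard}. For $w\mid p$, the map $r_w$ is Pontryagin dual to $H^1_\bullet(K_w,\TT)\to H^1_\bullet(K_w,T_{\alpha^{-1}})$. That map is surjective by the very definition of the specialized local condition, so $\ker(r_w)=0$. Likewise, injectivity of $s_\alpha$ needs only $A^{G_{K_\infty}}=0$ from Lemma~\ref{lem:T-no-invariant}, not irreducibility.

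By contrast, the bound by $c_w^{(p)}$ at $w\mid N^+$, which you only assert, is where real work is needed. The paper proceeds in three steps:
\begin{enumerate}
\item It identifies $\ker(r_{w'})$ with $H^1(K_{\infty,w'}/K_w,B_w(\alpha))$, where $B_w=A^{G_{K_{\infty,w'}}}$.
\item It uses $(B_w^\vee\otimes\Q_p)^{\gamma=1}=0$ to show that, for $\alpha\equiv1\bmod\varpi^m$ with $m\gg0$, this group has the same order as its untwisted counterpart.
\item It identifies the untwisted order with $\#H^1_\ur(K_w,A)$, because $K_{\infty,w'}/K_w$ is the unramified $\Z_p$-extension.
\end{enumerate}
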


    The proof of Theorem~\ref{thm:euler-char} will occupy the remainder of this section and will be divided into several steps.

\subsection{The groundwork}

    To simplify the notation, we write $\cX = (\Sel_\bullet(K_\infty,A)^\vee)_\tors$, and let $\fP_\alpha = (\gamma - \alpha^{-1}(\gamma))\subset \Lambda$ denote the height-one prime attached to $\alpha^{-1}$ (as before, $\gamma$ is a topological generator of $\Gamma$). Note that $\TT/\fP_\alpha\TT = T_\alpha$. We begin with the following preliminary lemma.
	\begin{lem}\label{lem:euler-char}
		Let $\cF_\bullet(X)$ be a generator of $\Char((\Sel_\bullet(K_\infty,A)^\vee)_{\tors})$. For all but finitely many $\alpha$, both $\cX/\fP_{\alpha^{-1}}$ and $\cX[\fP_{\alpha^{-1}}]$ are finite, and
		\begin{align}\label{eq:leadingterm}
		\left|	\cO/\cF_\bullet(\alpha(\gamma)-1)\right| = |\cX/\fP_{\alpha^{-1}}|/|\cX[\fP_{\alpha^{-1}}]|\le |\cX/\fP_{\alpha^{-1}}|.
		\end{align}
	\end{lem}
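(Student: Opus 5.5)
The plan is to apply the structure theorem for finitely generated torsion $\Lambda$-modules to $\cX$ and then compute prime by prime. Since $\Lambda\cong\cO[[X]]$ and $\cX$ is finitely generated and torsion, there is a pseudo-isomorphism
\[
\cX\longrightarrow E:=\bigoplus_{i}\Lambda/(f_i^{e_i})\oplus\bigoplus_j \Lambda/(\varpi^{\mu_j}),
\]
with finite kernel and finite cokernel, where the $f_i$ are distinguished irreducible polynomials. Its characteristic ideal is generated by $\cF_\bullet=\prod_i f_i^{e_i}\cdot\varpi^{\sum\mu_j}$, up to a unit.

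Next I would identify the specialisation. The prime $\fP_{\alpha^{-1}}$ is generated by $\gamma-\alpha(\gamma)$, which corresponds to $X-(\alpha(\gamma)-1)$. Reducing modulo this prime amounts to evaluating at $X=\alpha(\gamma)-1$, an element of the maximal ideal of $\cO$. The polynomials $f_i$ have only finitely many roots, so for all but finitely many $\alpha$ the value $f_i(\alpha(\gamma)-1)$ is nonzero for every $i$. For such $\alpha$, multiplication by $g:=X-(\alpha(\gamma)-1)$ on each summand $\Lambda/(h)$ of $E$ has finite kernel and finite cokernel. Indeed, $\Lambda/(h,g)\cong\cO/h(\alpha(\gamma)-1)$ is finite of order $|\cO/h(\alpha(\gamma)-1)|$, and $(\Lambda/(h))[g]=0$ because $g$ and $h$ are coprime in the UFD $\Lambda$. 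Note that for $h=\varpi^{\mu}$ the quotient is $\cO/\varpi^\mu$, which is still finite. Hence $|E/gE|/|E[g]|=|\cO/\cF_\bullet(\alpha(\gamma)-1)|$.

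The step I expect to be the main obstacle is passing from $E$ back to $\cX$. Here I would use the standard Herbrand-quotient argument: for a finite $\Lambda$-module $M$, the exact sequence $0\to M[g]\to M\xrightarrow{g}M\to M/gM\to0$ forces $|M[g]|=|M/gM|$. Write $0\to K\to\cX\to E\to C\to0$ with $K$ and $C$ finite, and break it into two short exact sequences. Applying the snake lemma for multiplication by $g$ to each sequence shows that the quotient $|\cdot/g|/|\cdot[g]|$ is multiplicative in short exact sequences, provided all terms are finite. Since this quotient equals $1$ on finite modules, it agrees for $\cX$ and $E$. The same sequences also show that $\cX/g\cX$ and $\cX[g]$ are finite.

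Putting these together gives the equality in \eqref{eq:leadingterm}. The final inequality is immediate because $|\cX[\fP_{\alpha^{-1}}]|\ge1$. The only care needed is the bookkeeping of $\alpha$ versus $\alpha^{-1}$ in the convention $\fP_\alpha=(\gamma-\alpha^{-1}(\gamma))$. This ensures that $\fP_{\alpha^{-1}}$ corresponds to evaluation at $\alpha(\gamma)-1$, which matches the left-hand side of \eqref{eq:leadingterm}.
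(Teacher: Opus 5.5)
Your proposal is correct and takes essentially the paper's approach. The paper excludes the finitely many $\alpha$ at which $\cF_\bullet$ vanishes and then cites the standard argument of Greenberg's Lemma~4.2; that argument is exactly your reduction via the structure theorem to elementary modules, combined with the fact that $|M/gM|=|M[g]|$ for finite $M$, applied here with $g=\gamma-\alpha(\gamma)$ generating $\fP_{\alpha^{-1}}$ in place of $\gamma-1$.
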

	\begin{proof}
		Note that by excluding finitely many $\alpha$, we can ensure that the characteristic polynomial of $\cX$ does not vanish at $\alpha(\gamma)$, so $\cX/\fP_{\alpha^{-1}}$ is finite. Then the standard argument from \cite[Lemma 4.2]{greenberg-cetraro} applies, and gives the finiteness of $\cX[\fP_{\alpha^{-1}}]$, as well as \eqref{eq:leadingterm}.
	\end{proof}
	
	Our next step is to relate $\cX/\fP_{\alpha^{-1}}$ to $\Sel_\bullet(K_\infty,A)^\vee/\fP_{{\alpha}^{-1}}$. For this, we shall recall the control theorem for signed Selmer groups. Below, let 
\begin{equation}
\label{eq:v_alpha}                  v_\alpha:\Sel_\bullet(K,A_\alpha)\to\Sel_\bullet(K_\infty,A)[\fP_\alpha]
\end{equation}
 be the natural map induced by the restriction on the Galois cohomology groups.
	\begin{prop}\label{prop:control}
		When $\alpha$ is nontrivial and $m\gg 0$, the map $v_\alpha$ has finite kernel and cokernel. In particular, for all but finitely many $\alpha$,
        \[ \corank_{\Lambda}\Sel_\bullet(K_\infty,A) = \corank_{\cO}\Sel_\bullet(K,A_\alpha).
        \]
	\end{prop}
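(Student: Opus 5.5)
The plan is the usual Mazur-type control argument, run one character at a time. Write $\Sel_\bullet(K,A_\alpha)$ and $\Sel_\bullet(K_\infty,A)[\fP_\alpha]$ as kernels of global-to-local maps. These kernels sit in a commutative diagram whose middle vertical map is the restriction
\[
h_\alpha\colon H^1(K,A_\alpha)\to H^1(K_\infty,A)[\fP_\alpha].
\]
The right vertical maps are the products of local restrictions $g_{\alpha,w}$ on the quotients by the local conditions. The snake lemma then gives
\[
0\to\ker v_\alpha\to\ker h_\alpha\to\ker g_\alpha\to\coker v_\alpha\to\coker h_\alpha.
\]
So it suffices to show that $\ker h_\alpha$, $\coker h_\alpha$ and $\ker g_\alpha=\bigoplus_w\ker g_{\alpha,w}$ are finite.

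\textbf{Global map.} Using $A[\fP_\alpha]\cong A_\alpha$ over $K_\infty$ after twisting, inflation–restriction identifies $\ker h_\alpha$ with $H^1(\Gamma,H^0(K_\infty,A_\alpha))$ and $\coker h_\alpha$ with a subgroup of $H^2(\Gamma,H^0(K_\infty,A_\alpha))=0$. Lemma~\ref{lem:T-no-invariant} (with $n=1$) gives $H^0(K_{\infty,w'},A[\varpi])=0$ already at a prime above $p$. Hence $H^0(K_\infty,A_\alpha)=0$, and $h_\alpha$ is an isomorphism.

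\textbf{Primes $w\nmid p$.} Primes not dividing $N_0$ are unramified for $A$, and since we impose the unramified condition there, their contribution vanishes. For $w\mid N_0$, the prime $w$ is finitely decomposed in $K_\infty$: primes in $N^-$ are inert in $K$ and split completely in the anticyclotomic extension, while primes in $N^+$ are finitely decomposed. The kernel of $H^1(K_w,A_\alpha)/H^1_\f\to\prod_{\lambda\mid w}H^1(K_{\infty,\lambda},A)$ is controlled by $H^1(\Gamma_\lambda,H^0(K_{\infty,\lambda},A_\alpha))$. This is finite, and trivial if $\Gamma_\lambda$ is trivial, because $H^0(K_{\infty,\lambda},A_\alpha)$ has finite $\cO$-corank and $\alpha\neq 1$ acts nontrivially on its divisible part. (Its size is bounded by $c_w^{(p)}$-type terms, which feed into Theorem~\ref{thm:euler-char}.)

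\textbf{Primes $w\mid p$.} I expect this to be the main obstacle. The local conditions at $p$ are defined on the compact side: $H^1_\bullet(K_w,T_{\alpha^{-1}})$ is the image of $\ker\col^\bullet$, and $H^1_\bullet(K_w,A_\alpha)$ is its orthogonal complement. By local Tate duality, $\ker g_{\alpha,w}$ is dual to the cokernel of
\[
H^1_\bullet(K_w,\TT)/\fP_{\alpha^{-1}}\to H^1_\bullet(K_w,T_{\alpha^{-1}}).
\]
This map is surjective by the definition of the specialized condition. What remains is to compare $H^1(K_w,\TT)/\fP$ with $H^1(K_w,T_{\alpha^{-1}})$ and to check the compatibility of orthogonal complements. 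For this I would use the following three facts:
\begin{enumerate}
\item $H^0(K_{\infty,w'},A)=0$, again by Lemma~\ref{lem:T-no-invariant}, so $H^1(K_w,\TT)$ is $\Lambda$-torsion-free with no $H^2$ obstruction and $H^1(K_w,\TT)/\fP_{\alpha^{-1}}\hookrightarrow H^1(K_w,T_{\alpha^{-1}})$ has finite cokernel;
\item the image of $\col^\bullet$ has finite index in a free $\Lambda'$-module, so the quotient $H^1(K_w,\TT)/\ker\col^\bullet$ is $\Lambda$-free up to pseudo-null error;
\item $\ker\col^\bullet$ is self-orthogonal (\S\ref{S:orth-split}, \S\ref{S:orth-inert}).
\end{enumerate}
Together these show that the discrete local condition $H^1_\bullet(K_{\infty,\lambda},A)[\fP_\alpha]$ agrees with the image of $H^1_\bullet(K_w,A_\alpha)$ up to finite error, provided $\fP_\alpha$ avoids the finitely many height-one primes in the support of the relevant torsion. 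Taking $m\gg0$, equivalently excluding finitely many $\alpha$, achieves this.

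\textbf{Corank statement.} Once $v_\alpha$ has finite kernel and cokernel, $\corank_\cO\Sel_\bullet(K,A_\alpha)=\corank_\cO\Sel_\bullet(K_\infty,A)[\fP_\alpha]$. Let $X=\Sel_\bullet(K_\infty,A)^\vee$. By the structure theorem, $X$ is pseudo-isomorphic to $\Lambda^r\oplus$ torsion. Hence $X/\fP_\alpha X$ has $\cO$-rank $r$ as long as $\fP_\alpha$ is not in the support of $X_\tors$, which again excludes only finitely many $\alpha$.
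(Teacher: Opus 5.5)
Your proposal is correct and follows the same route as the paper, which invokes the control argument of \cite[Lemma 2.3]{ponsinet20} and then the structure theorem for the corank statement. That argument is your snake-lemma comparison: the global restriction is an isomorphism because $H^0(K_{\infty,w'},A)=0$ by Lemma~\ref{lem:T-no-invariant}, the local kernels vanish at $N^-$ and, by duality, at $p$, and they are finite at $N^+$.

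Two small corrections. At $w\mid N^+$, finiteness of $H^1(\Gamma_\lambda,H^0(K_{\infty,\lambda},A_\alpha))$ needs $\alpha(\gamma_\lambda)^{-1}$ not to be an eigenvalue of $\gamma_\lambda$ on $V_p$ of the divisible part of $H^0(K_{\infty,\lambda},A)$; merely $\alpha\neq 1$ is not enough, and the condition is what $m\gg0$ secures. At $p$, the surjectivity you already observe gives $\ker g_{\alpha,w}=0$ outright, so the three further facts you list are not needed.
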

	\begin{proof}
		The argument of \cite[Lemma 2.3]{ponsinet20} applies verbatim here, as the assumption (Tors.) \textit{op.~cit.}, which asserts that $H^0(K_w,A)=0$ for $w|p$, holds under our ongoing hypotheses by Lemma \ref{lem:T-no-invariant}. The ``in particular'' part can be shown via an application of the structure theorem of finitely generated $\Lambda$-modules.
	\end{proof}

    \subsection{Measuring $|\cX/\fP_{\alpha^{-1}}|$}
    
	Denote by $R\in \Z_{\ge 0}$ the $\Lambda$-corank of $\Sel_\bullet(K_\infty,A)$. By the structure theorem of finitely generated $\Lambda$-modules, there exists a pseudo-null module $Q$ and a short exact sequence
	\[
		0\to \Sel_\bullet(K_\infty,A)^\vee/\tors \to \Lambda^{\oplus R} \to Q \to 0.
	\]
While Conjecture \ref{conj:main} predicts that $R=1$, we do not need the precise value of $R$ in our proof of Theorem \ref{thm:euler-char} below.
	\begin{prop}\label{prop:sel-sha}
		Suppose that $m\gg 0$ and $\alpha$ is nontrivial. Then 
		\[
			\frac{|\cX/\fP_{\alpha^{-1}}|}{|\Sha_\bullet(K,A_\alpha)|} =  \frac{|\Sel_\bullet(K_\infty,A)[\fP_\alpha]/\im(\Sel_\bullet(K,A_\alpha))|}{|Q^\Gamma|},
		\]
		where the image of $\Sel_\bullet(K,A_\alpha)$ is under the restriction map $v_\alpha$ in \eqref{eq:v_alpha}.
	\end{prop}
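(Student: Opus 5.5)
The plan is to compare two computations of the size of the finite quotient $\Sel_\bullet(K_\infty,A)[\fP_\alpha]/\Sel_\bullet(K_\infty,A)[\fP_\alpha]_\div$. One computation is algebraic, on the compact side via $\Sel_\bullet(K_\infty,A)^\vee/\fP_{\alpha^{-1}}$. The other is arithmetic, via the control map $v_\alpha$ of Proposition~\ref{prop:control}. Throughout I exclude finitely many $\alpha$ and take $m\gg0$, so that Lemma~\ref{lem:euler-char} and Proposition~\ref{prop:control} apply. I read $Q^\Gamma$ in the statement as the $\fP_{\alpha^{-1}}$-torsion $Q[\fP_{\alpha^{-1}}]$ of $Q$, i.e.\ invariants after the relevant twist. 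Since $Q$ is pseudo-null, it is finite, so $Q[\fP_{\alpha^{-1}}]$ and $Q/\fP_{\alpha^{-1}}$ have the same order.

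\emph{Algebraic side.} Write $S^\vee=\Sel_\bullet(K_\infty,A)^\vee$ and $\fP=\fP_{\alpha^{-1}}$. Start from $0\to\cX\to S^\vee\to S^\vee/\tors\to0$ and tensor with $\Lambda/\fP$. Since $S^\vee/\tors$ is torsion-free, $(S^\vee/\tors)[\fP]=0$, which gives the exact sequence $0\to\cX/\fP\to S^\vee/\fP\to(S^\vee/\tors)/\fP\to0$. Next, applying the snake lemma to multiplication by a generator of $\fP$ on $0\to S^\vee/\tors\to\Lambda^{\oplus R}\to Q\to0$ gives
\[
0\to Q[\fP]\to (S^\vee/\tors)/\fP\to \cO^{\oplus R}\to Q/\fP\to 0 .
\]
Hence the $\cO$-torsion of $(S^\vee/\tors)/\fP$ is exactly $Q[\fP]$, because the image in $\cO^{\oplus R}$ is free. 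Since $\cX/\fP$ is finite by Lemma~\ref{lem:euler-char}, the preimage in $S^\vee/\fP$ of $Q[\fP]$ is precisely the $\cO$-torsion. Therefore
\[
|(S^\vee/\fP)_\tors|=|\cX/\fP|\cdot|Q[\fP]|.
\]
Pontryagin duality identifies $S^\vee/\fP_{\alpha^{-1}}$ with the dual of $\Sel_\bullet(K_\infty,A)[\fP_\alpha]$; I will double-check the twist convention for $\iota$ against $\TT/\fP_\alpha\TT=T_\alpha$. Under this duality the torsion of the compact module corresponds to the quotient by the maximal divisible subgroup. So $|\Sel_\bullet(K_\infty,A)[\fP_\alpha]/\div|=|\cX/\fP_{\alpha^{-1}}|\cdot|Q[\fP_{\alpha^{-1}}]|$.

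\emph{Arithmetic side.} First I show that $v_\alpha$ is injective. Its kernel injects into $H^1(\Gamma,H^0(K_\infty,A)(\alpha))$, and $H^0(K_\infty,A)=0$ because already $H^0(K_{\infty,w},A)=0$ at primes above $p$ (Lemma~\ref{lem:T-no-invariant}, passing from $A[\varpi]$ to $A$). By Proposition~\ref{prop:control}, the cokernel of $v_\alpha$ is finite. An injective map with finite cokernel sends $\Sel_\bullet(K,A_\alpha)_\div$ onto a divisible subgroup of full corank, hence onto $\Sel_\bullet(K_\infty,A)[\fP_\alpha]_\div$. A diagram chase on the induced map of quotients by divisible parts then gives
\[
|\Sel_\bullet(K_\infty,A)[\fP_\alpha]/\div| = |\Sha_\bullet(K,A_\alpha)|\cdot|\Sel_\bullet(K_\infty,A)[\fP_\alpha]/\im v_\alpha| .
\]
Equating the two expressions yields the proposition.

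The step I expect to need the most care is the identification of torsion and divisible parts under duality together with the correct twist. Concretely, I must check that $\fP_{\alpha^{-1}}$ on the compact side matches $\fP_\alpha$ on the discrete side, and that the pseudo-null term really contributes $Q[\fP]$, i.e.\ $Q^\Gamma$ after twisting. The injectivity of $v_\alpha$ is also needed, so that no kernel term appears.
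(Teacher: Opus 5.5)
Your proof is correct. It rests on the same three inputs as the paper's proof:
\begin{itemize}
\item injectivity of $v_\alpha$, from $A^{G_{K_\infty}}=0$ (Lemma~\ref{lem:T-no-invariant});
\item finiteness of $\coker(v_\alpha)$ (Proposition~\ref{prop:control});
\item the exact sequence $0\to Q[\fP_{\alpha^{-1}}]\to (S^\vee/\tors)/\fP_{\alpha^{-1}}\to\cO^{\oplus R}\to Q/\fP_{\alpha^{-1}}\to0$, where $S=\Sel_\bullet(K_\infty,A)$.
\end{itemize}

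Where you differ is the bookkeeping. You count the single finite group $S[\fP_\alpha]/\div$ twice. Once you count it through the $\cO$-torsion of its dual $S^\vee/\fP_{\alpha^{-1}}$. Once you count it through $v_\alpha$, using that an injection with finite cokernel carries $\Sel_\bullet(K,A_\alpha)_\div$ onto $S[\fP_\alpha]_\div$.

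The paper instead filters $S[\fP_\alpha]$ by $M_\infty[\fP_\alpha]$, where $M_\infty^\vee=S^\vee/\tors$. The quotient $S_\cotors[\fP_\alpha]$ is dual to $\cX/\fP_{\alpha^{-1}}$. The paper then runs the snake lemma on the map $(u,v_\alpha,w)$ out of $0\to\Sel_\bullet(K,A_\alpha)_\div\to\Sel_\bullet(K,A_\alpha)\to\Sha_\bullet(K,A_\alpha)\to0$. It shows $\ker(u)=0$ and $\coker(u)\simeq(Q^\Gamma)^\vee$.

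Your version is more economical: it tracks only orders, and needs neither $M_\infty$ nor the surjectivity of the paper's bottom row. The paper's version yields a genuine exact sequence $0\to\ker(w)\to(Q^\Gamma)^\vee\to\coker(v_\alpha)\to\coker(w)\to0$. However, only the resulting equality of orders is used later, in the proof of Theorem~\ref{thm:euler-char}.

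Two small remarks. First, your duality convention $(S^\vee/\fP_{\alpha^{-1}})^\vee=S[\fP_\alpha]$ is exactly the one the paper uses. Second, no reinterpretation of $Q^\Gamma$ is needed. Since $Q$ is pseudo-null, hence finite, we have $\varpi^mQ=0$ for $m\gg0$. Then the generator $\gamma-\alpha(\gamma)=(\gamma-1)-(\alpha(\gamma)-1)$ of $\fP_{\alpha^{-1}}$ acts on $Q$ as $\gamma-1$. So $Q[\fP_{\alpha^{-1}}]=Q^\Gamma$ on the nose, and this is precisely where the hypothesis $m\gg0$ enters.
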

	\begin{proof}
		Let $M_\infty\subseteq \Sel_\bullet(K_\infty,A)$ such that $M_\infty^\vee = \Sel_\bullet(K_\infty,A)^\vee/(\Sel_\bullet(K_\infty,A)^\vee)_{\tors}$. As we have an injection $\Sel_\bullet(K_\infty,A)[\fP_{\alpha}]/M_\infty[\fP_{\alpha}] \hookrightarrow (\cX/\fP_{\alpha^{-1}})^\vee$, Lemma \ref{lem:euler-char} shows that $M_\infty[\fP_{\alpha}]$ contains the maximal divisible subgroup of $\Sel_\bullet(K_\infty,A)[\fP_{\alpha}]$. Moreover, taking the right-derived functor of $\fP_{\alpha^{-1}}$-torsion on the exact sequence
		\[
		0\to M_\infty^\vee \to \Lambda^{\oplus R}\to Q\to 0
		\]
		gives the following exact sequence when $m\gg 0$:
		\[			0\to Q^{\Gamma} \to M_\infty^\vee/\fP_{\alpha^{-1}} \to \cO^{\oplus R} \to Q_\Gamma\to 0.
		\]
		It follows that $M^\vee_\infty/\fP_{\alpha^{-1}} \simeq \cO^{\oplus R}\oplus Q^\Gamma$, and thus $M_\infty[\fP_{\alpha}] \simeq (F/\cO)^R\oplus (Q^\Gamma)^\vee$.
		
		Next, consider the following commutative diagram with exact rows
		\[
		\begin{tikzcd}
			0 \ar[r] & \Sel_\bullet(K,A_\alpha)_\div \ar[r] \ar[d,"u"]&
			\Sel_\bullet(K,A_\alpha) \ar[r] \ar[d,"v_\alpha"]&
			\Sha_\bullet(K,A_\alpha) \ar[r] \ar[d,"w"] &
			0\\
			0 \ar[r] & M_\infty[\fP_{\alpha}] \ar[r] &
			\Sel_\bullet(K_\infty,A)[\fP_{\alpha}] \ar[r] &
			\Sel_\bullet(K_\infty,A)_\cotors[\fP_{\alpha}] \ar[r] &
			0.
		\end{tikzcd}
		\]
		(The surjectivity of the second row is due to $M_\infty/\fP_{\alpha} = (M_\infty^\vee[\fP_{\alpha^{-1}}])^\vee =0$.)
		
		By the control theorem (Proposition \ref{prop:control}) and the finiteness of $\Sha_\bullet(K,A_\alpha)$, we see that $\ker(u)$ and $\coker(u)$ are finite. Dualizing, the map $M^\vee_\infty/\fP_{\alpha^{-1}} \xrightarrow{u^\vee} (\Sel_\bullet(K,A_\alpha)_\div)^\vee$ has finite kernel, and its target is a free $\cO$-module, so its kernel must be the maximal torsion submodule of $M^\vee_\infty/\fP_{\alpha^{-1}}$, namely $Q^\Gamma$. Thus $\coker(u) \simeq (Q^\Gamma)^\vee$.
		
		Note that $v_\alpha$ is induced from the restriction map $H^1(K,A_\alpha)\to H^1(K_\infty,A) = H^1(K_\infty,A_\alpha)$. By the inflation-restriction sequence, the kernel of the latter map is given by $H^1(K_\infty/K,A_\alpha^{G_{K_\infty}})$. Recall that $A_\alpha^{G_{K_\infty}} = A^{G_{K_\infty}} = 0$ by Lemma \ref{lem:T-no-invariant}. Thus, $\ker(v_\alpha)=0$. The snake lemma then implies $\ker(u) = 0$, and we deduce that
		\begin{align*}
			\frac{|\Sel_\bullet(K_\infty,A)_\cotors[\fP_{\alpha}]|}{|\Sha_\bullet(K,A_\alpha)|} &= \frac{|\coker(w)|}{|\ker(w)|}\\
            &= \frac{|\coker(v_\alpha)||\ker(u)|}{|\ker(v_\alpha)||\coker(u)|}\\
            &= \frac{|\Sel_\bullet(K_\infty,A)[\fP_\alpha]/\im(\Sel_\bullet(K,A_\alpha))|}{|Q^\Gamma|},
		\end{align*}
        as desired.
	\end{proof}
	
	Our next goal is to bound the quantity $|\Sel_\bullet(K_\infty,A)[\fP_\alpha]/\im(\Sel_\bullet(K,A_\alpha))| = |\coker(v_\alpha)|$ by a product of certain explicit local constants. Denote by $\Sigma$ the subset of places in $K$ given by $\{w\colon w\mid Np\}$, and let $K_\Sigma/K$ be the maximal extension unramified outside $\Sigma\cup\{\infty\}$. For $L\in \{K,K_\infty\}$, put
	\begin{align}
		\cP_\bullet^\Sigma(L,A_\alpha) = \prod_{\substack{w:\text{ place of }L\\ w\mid Np}} H^1_{/\bullet}(L_w,A_\alpha),
	\end{align}
	where
	\begin{align*}
		H^1_{/\bullet}(L_w,A_\alpha) = \begin{cases}
			H^1(L_w,A_\alpha)/H^1_\f(L_w,A_\alpha) & \text{if }w\mid N;\\
			H^1(L_w,A_\alpha)/H^1_\bullet(L_w,A_\alpha) & \text{if }w\mid p.
		\end{cases}
	\end{align*}
	Furthermore, denote
	\[		\cG^\Sigma_\bullet(L,A_\alpha) = \im\left(H^1(K_\Sigma/L,A_\alpha)\to \cP^\Sigma_\bullet(L,A_\alpha)\right).
	\]
	As in \cite[\S3]{greenberg-cetraro}, consider the diagram
	\[
	\begin{tikzcd}
		0 \ar[r] & \Sel_\bullet(K,A_\alpha) \ar[d,"v_\alpha"] \ar[r]  & H^1(K_\Sigma/K,A_\alpha) \ar[r] \ar[d,"h"] &  \cG^\Sigma_\bullet(K,A_\alpha) \ar[r] \ar[d,"g"] & 0\\
		0 \ar[r] & \Sel_\bullet(K_\infty,A)[\fP_\alpha] \ar[r] & H^1(K_\Sigma/K_\infty,A)[\fP_\alpha] \ar[r] & \cP^\Sigma_\bullet(K_\infty,A)[\fP_\alpha]. &
	\end{tikzcd}
	\]
	Since $A_\alpha^{G_{K_\infty}} = 0$, we see that $h$ is an isomorphism, and hence $\coker(v_\alpha)\simeq \ker(g)$ by the snake lemma. In what follows, for $w\in \Sigma$, let $r_w$ denote the local restriction map
		\begin{align*}
			r_w\colon H^1_{/\bullet}(K_w,A_\alpha)\to \prod_{w'\mid w} H^1_{/\bullet}(K_{\infty,w'},A).
		\end{align*}
    Let $r: \cP^\Sigma_\bullet(K,A_\alpha) \to \cP^\Sigma_\bullet(K_\infty,A)$ denote the product of the maps $r_w$'s. By definition, we have
    \[
        \ker(g)\subseteq \ker(r) = \prod_{w\mid Np}\ker(r_w),
    \]
    and below we shall establish the finiteness of
	\[
		\ker(r_w) =\ker\left(H^1_{/\bullet}(K_w,A_\alpha) \to \prod_{w'} H^1_{/\bullet}(K_{\infty,w'},A)\right)
	\]
	for all $w\mid Np$. 
    
    \subsection{Local Tamagawa numbers and concluding the proof}
    
    First note that for any place $w'$ of $K_{\infty}$ above $w$, the kernel of $r_{w'}: H^1_{/\bullet}(K_w,A_\alpha)\to H^1_{/\bullet}(K_{\infty,w'},A)$ does not depend on the choice of $w'$, since the decomposition groups are Galois conjugates of each other. In particular, $\ker(r_w) = \ker(r_{w'})$ for any choice of $w'$.
	\begin{prop}\label{prop:cp}
		Let $w\in \Sigma$ and $w'\mid w$ be a place of $K_\infty$. We have $\ker(r_{w'}) = 0$ unless $w\mid N^+$. In the latter case, $\ker(r_{w'})$ is finite for $m\gg 0$, and has cardinality
		\[
			c_w^{(p)} = \left|\frac{A^{G_{K_{\infty,w'}}}/\div}{(\gamma-1)\left(A^{G_{K_{\infty,w'}}}/\div\right)}\right|.
		\]
	\end{prop}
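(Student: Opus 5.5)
We split into the three kinds of places in $\Sigma$: primes $w\mid p$, primes $w\mid N^-$, and primes $w\mid N^+$. In each case we identify $\ker(r_{w'})$ with a group cohomology term $H^1(\Gamma_{w'},(\cdot))$, where $\Gamma_{w'}\subseteq\Gamma$ is the decomposition group of $w'$. This follows the pattern of \cite[\S3]{greenberg-cetraro}.

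\textbf{Primes above $p$.} Let $w\mid p$. Both $H^1_\bullet(K_w,A_\alpha)$ and $\prod_{\lambda\mid w}H^1_\bullet(K_{\infty,\lambda},A)$ are defined as orthogonal complements of images of $H^1_\bullet(K_w,\TT)$. So $r_w$ is Pontryagin dual to the map
\[
\left(\prod_{\lambda\mid w}H^1_\bullet(K_{\infty,\lambda},A)\right)^\vee\supseteq H^1_\bullet(K_w,\TT)\to H^1_\bullet(K_w,T_{\alpha^{-1}}).
\]
This map is surjective by the very definition of $H^1_\bullet(K_w,T_{\alpha^{-1}})$ as the image of specialization. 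We then check that the quotient $H^1(K_w,\TT)/H^1_\bullet(K_w,\TT)$, which embeds in $\Lambda'$ via $\col^\bullet$, is $\fP$-torsion free. Hence the dual of $\ker(r_w)$ is the cokernel of a surjection and vanishes, giving $\ker(r_w)=0$.

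To make this rigorous, we use the exact sequence $0\to H^1(K_w,\TT)\xrightarrow{\gamma-\alpha^{-1}(\gamma)}H^1(K_w,\TT)\to H^1(K_w,T_{\alpha^{-1}})$. Injectivity of specialization-compatible restriction then reduces to $H^0(K_{\infty,w'},A)=0$, which is Lemma~\ref{lem:T-no-invariant}. This identification of the signed local condition over $K$ with the dual of the $\Lambda$-adic one is the delicate step. I expect it to be the main obstacle, since the quotient by the local condition must have no $\fP_\alpha$-torsion.

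\textbf{Primes dividing $N$.} Let $w\nmid p$. Then $H^1_\f=H^1_\ur$ on divisible modules, and $H^1_{/\f}(L_w,A)$ injects into $H^1(I_w,A)^{\mathrm{Frob}}$. The extension $K_{\infty,w'}/K_w$ is unramified, so restriction on inertia cohomology is injective. Hence
\[
\ker(r_{w'})\cong \ker\left(H^1(K_w^{\ur}/K_w,\,\cdot)\to H^1(K_w^{\ur}/K_{\infty,w'},\,\cdot)\right),
\]
computed via inflation-restriction. Equivalently, $\ker(r_{w'})=H^1(\Gamma_{w'},A^{G_{K_{\infty,w'}}})$ modulo the image of $H^1_\f$, which amounts to the image of $H^1(\Gamma_{w'},(A^{G_{K_{\infty,w'}}})_{\div})$.

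If $w\mid N^-$, then $w$ is inert and hence splits completely in the anticyclotomic tower. So $\Gamma_{w'}$ is trivial, $K_{\infty,w'}=K_w$, and $\ker(r_{w'})=0$.

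If $w\mid N^+$, then $\Gamma_{w'}$ has finite index in $\Gamma$. For the procyclic $\Gamma_{w'}$ with generator $\gamma$ (up to index), $H^1$ of a module $B$ is $B/(\gamma-1)B$ with the $\alpha$-twist. Put $B=A^{G_{K_{\infty,w'}}}$. Modding out the divisible part, which comes from $H^1_\f$, leaves $\ker(r_{w'})\cong (B/\div)/(\gamma-1)$, which is finite. The twist by $\alpha$ acts trivially on the finite group $B/\div$ once $m\gg0$ exceeds its exponent. This gives exactly the displayed $c_w^{(p)}$. Finally, we identify this with $\#H^1_\ur(K_w,A)$ as in the standard Tamagawa computation \cite[Lemma 3.4]{greenberg-cetraro}.
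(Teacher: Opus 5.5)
\textbf{The step at $w\mid N$ is wrong as written.} At $w\mid p$ and $w\mid N^-$ your argument is the paper's. At $w\mid p$, surjectivity of $H^1_\bullet(K_w,\TT)\to H^1_\bullet(K_w,T_{\alpha^{-1}})$, which holds by definition, already gives $\ker(r_w)=0$ after dualizing. The $\fP_\alpha$-torsion-freeness and $H^0(K_{\infty,w'},A)=0$ checks you flag as the ``main obstacle'' are not needed.

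The problem is your claim that $H^1_\f=H^1_\ur$, so that $H^1_{/\f}(L_w,A)$ injects into $H^1(I_w,A)$. This is false for $L=K$. For $w\nmid p$ one only has $H^1_\f(K_w,A_\alpha)=H^1_\ur(K_w,A_\alpha)_{\div}$, and the finite quotient $H^1_\ur/H^1_\f$ is exactly the Tamagawa contribution the proposition measures. Moreover, if your claim held at both levels, then since $I_{w'}=I_w$ and $\alpha$ is unramified at $w$, $r_{w'}$ would factor injectively through $H^1(I_w,A)$. That would force $\ker(r_{w'})=0$ for $w\mid N^+$ too, contradicting the statement and your own final formula. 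So your ``Hence'' does not follow from what precedes it. The sentence ``modding out the divisible part, which comes from $H^1_\f$'' is asserted, not proved, and it is where the content lies.

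\textbf{How to repair it.} The quotient $G_{K_{\infty,w'}}/I_w$ is pro-prime-to-$p$, so $H^1_\ur(K_{\infty,w'},A)=0$. Hence $H^1_\f(K_{\infty,w'},A)=0$ and $H^1(K_{\infty,w'},A)\hookrightarrow H^1(I_w,A)$. It follows that
\[
\ker\bigl(H^1(K_w,A_\alpha)\to H^1(K_{\infty,w'},A)\bigr)=H^1_\ur(K_w,A_\alpha)\cong H^1(\Gamma_{w'},B(\alpha)),
\]
and therefore $\ker(r_{w'})=H^1_\ur(K_w,A_\alpha)/H^1_\ur(K_w,A_\alpha)_{\div}$. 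Since $\Gamma_{w'}\cong\Z_p$ has $p$-cohomological dimension one, the image of $H^1(\Gamma_{w'},B_{\div}(\alpha))$ is the maximal divisible subgroup of $H^1(\Gamma_{w'},B(\alpha))$. The quotient is $H^1(\Gamma_{w'},(B/\div)(\alpha))$, which is your formula, and the twist disappears once $m\gg0$.

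\textbf{Comparison with the paper.} The paper never identifies $H^1_\f$ with a divisible part. It shows directly that $\ker(\res)=H^1(\Gamma_{w'},B_w(\alpha))$ is finite, by dualizing to $(B_w^\vee)^{\gamma=\alpha(\gamma)}$. It then uses $(B_w^\vee\otimes\Q_p)^{\gamma=1}=0$ (from Lei--Mastella--Zhao) and finiteness of the eigenvalue set, so that for $m\gg0$ only the finite torsion of $B_w^\vee$ contributes. Finiteness then forces the divisible group $H^1_\f(K_w,A_\alpha)$ to vanish. Your repaired route avoids that eigenvalue input, but it needs $H^1_\f=(H^1_\ur)_{\div}$ and the vanishing of $H^1_\ur$ over $K_{\infty,w'}$, which you must state correctly.
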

	\begin{proof}
		Suppose first that $w\mid p$. Since $H^1_\bullet(K_w,T_\alpha)$ is defined as the image of $H^1_\bullet(K_w,\TT)$, we deduce that $r_w$ is injective after dualizing. 
        
        For the rest of the proof, we assume $w\mid N$. We recall from \cite[proof of Lemma 4.4]{lei-mastella-zhao}, which asserts that there is a short exact sequence
		\begin{align}\label{eq:local-tama}
			0\to H^1_\f(K_w,A_\alpha) \to\ker(H^1(K_w,A_\alpha)\to H^1(K_{\infty,w'},A)) \to \ker(r_{w'})\to 0.
		\end{align}
		We will show that the middle term is finite. This then implies that the first term is zero, as $H^1_\f(K_w,A_\alpha)$ is by definition the image of $H^1_\f(K_w,V_\alpha)$ and therefore is divisible. It then follows that the cardinality of $\ker(r_w')$ is the same as that of the middle term.
		
		To study $\ker(H^1(K_w,A_\alpha)\to H^1(K_{\infty,w'},A))$ for $w| N$, there is a dichotomy: Either $w$ splits completely in $K_\infty$, which happens when the rational prime $\ell$ below $w$ is inert in $K$ \cite[Exercise 1.10]{greenberg-park-city}; or $w$ decomposes into finitely many primes in $K_\infty$, which occurs when the rational prime below $w$ is split \cite[Theorem 2]{brink}. As $w| N$, the first case applies to those $w| N^-$ and the second to $w| N^+$. 
		
		In the first scenario, we have $K_{\infty,w'} = K_w$, so $\ker(r_{w'}) = 0$ plainly by \eqref{eq:local-tama}. In the second, denote $B_w = A^{G_{K_{\infty,w'}}}$, which is cofinitely generated over $\cO$. The inflation-restriction exact sequence tells us that
		\[
			\ker(H^1(K_w,A_\alpha)\to H^1(K_{\infty,w'},A))\simeq H^1(K_{\infty,w'}/K_w,B_w(\alpha))\simeq 
			B_w(\alpha)/(\gamma_{w'}-1)B_w(\alpha),
		\]
		where $\gamma_{w'}$ is any topological generator of $\Gal(K_{\infty,w'}/K_w)\simeq \Z_p$; we may identify the last term with $[B_w/(\alpha(\gamma)\gamma -1)B_w](\alpha)$. By duality, we have
		\[
			\#(B_w/(\alpha(\gamma)\gamma -1)B_w) = \#(B_w^\vee)^{\gamma=\alpha(\gamma)}.
		\]
		Since $B_w^\vee\otimes \Q_p$ is a finite-dimensional vector space, the action of $\gamma\in\Gamma$ on this space admits finitely many eigenvalues. Moreover, by \cite[proof of Lemma 4.3]{lei-mastella-zhao}, $(B_w^\vee\otimes \Q_p)^{\gamma=1}=0$. Therefore, when $m\gg0$,
        \[
			(B_w^\vee)^{\gamma=\alpha(\gamma)} = B_w^\vee[\varpi^\infty]^{\gamma=\alpha(\gamma)} = B_w^\vee[\varpi^\infty]^{\gamma=1},
		\]
		where the last equality is due to $B_w^\vee[\varpi^\infty]$ being finite. The desired finiteness of 			$\ker(H^1(K_w,A_\alpha)\to H^1(K_{\infty,w'},A))$ follows after taking duals.
	\end{proof}
	The constant $c_w^{(p)}$ in fact coincides with the $p$-part of the local Tamagawa number:
	\begin{cor}\label{cor:tama}
		For $w\mid N^+$, the constant $c_w^{(p)}$ above equals $\#H^1_\ur(K_w,A)$.
	\end{cor}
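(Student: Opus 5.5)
Here $w\mid N^+$ lies above a prime $\ell\ne p$ that splits in $K$, so $K_w\simeq\Q_\ell$. The plan is to compute both sides in terms of the inertia invariants and the Frobenius action. By Proposition~\ref{prop:cp}, $c_w^{(p)}$ is the order of the coinvariants of $\gamma$ on $\bar B_w := B_w/\div$, where $B_w=A^{G_{K_{\infty,w'}}}$.

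\textbf{Step 1: $B_w$ equals the inertia invariants.} Since $\ell\ne p$, the extension $K_{\infty,w'}/K_w$ is an unramified $\Z_p$-extension: it is the unramified $\Z_p$-extension inside $K_w^{\ur}$, because the tamely ramified part of any pro-$p$ abelian extension of $\Q_\ell$ is finite. Hence $G_{K_{\infty,w'}}$ contains the inertia group $I_w$, and $B_w=(A^{I_w})^{G_{K_{\infty,w'}}}$. The action of $\Gal(K_w^{\ur}/K_w)\simeq\hat\Z$ on $A^{I_w}$ is through Frobenius. The subgroup $G_{K_{\infty,w'}}/I_w$ is $\overline{\langle\mathrm{Frob}_w^{p^k}\rangle}$ times the prime-to-$p$ part, for $k$ the index of the decomposition group.

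\textbf{Step 2: pass to the divisible quotient.} Write $D=(A^{I_w})_\div$ and $E=A^{I_w}/D$. The group $E$ is finite, and
\[
H^1_\ur(K_w,A)=A^{I_w}/(\mathrm{Frob}_w-1)A^{I_w}.
\]
The map $\mathrm{Frob}_w-1$ is surjective on $D$: its $\Q_p$-rational version has no eigenvalue $1$, since the Frobenius eigenvalues on $V^{I_w}$ have absolute value $\ell^{1/2}$ or $1$ with eigenvalue $\pm1$ twisted. More precisely, we use that $(B_w^\vee\otimes\Q_p)^{\gamma=1}=0$ from \cite[Lemma 4.3]{lei-mastella-zhao}, or directly weight arguments, to get $(V^{I_w})^{\mathrm{Frob}=1}=0$. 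It follows that $H^1_\ur(K_w,A)\simeq E/(\mathrm{Frob}_w-1)E$, which is finite, and $|E_{\mathrm{Frob}}|=|E^{\mathrm{Frob}}|$.

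\textbf{Step 3: compare with $\bar B_w$ and conclude.} The group $\bar B_w$ is $E^{\mathrm{Frob}^{p^k}\text{-}}$ adjusted for divisible pieces. Here the main obstacle appears: $B_w\cap D$ need not be divisible when Frobenius on $D$ has $p$-power-order eigenvalues modulo $\varpi$. I would handle this by taking $k$ large and working through the structure of $B_w$. Since $\mathrm{Frob}^{p^k}\to1$ $p$-adically on the finite group $E$, for $k\gg0$ the group $\mathrm{Frob}^{p^k}$ acts trivially on $E$. Moreover $D^{\mathrm{Frob}^{p^k}}$ is divisible up to a finite part that stabilizes. Its Frobenius-coinvariants vanish because, by the eigenvalue argument, $\mathrm{Frob}-1$ is an isogeny on the $\Q_p$-span. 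Choosing $\gamma$ to act via the $p$-adic ``$\mathrm{Frob}^{p^k}$-root'' direction, the $\gamma$-coinvariants of $\bar B_w$ become $E/(\mathrm{Frob}-1)E$, and hence $c_w^{(p)}=|E_{\mathrm{Frob}}|=\#H^1_\ur(K_w,A)$. If the finite-torsion bookkeeping in $D^{\mathrm{Frob}^{p^k}}$ proves awkward, I would fall back on the Euler-characteristic identity $|M^{\gamma}|=|M_{\gamma}|$ for finite $M$. Applied along the exact sequence $0\to D\cap B_w\to B_w\to E'\to0$, this cancels the contribution of the finite part.
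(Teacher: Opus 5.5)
There is a genuine gap. Your Step~1 misidentifies $G_{K_{\infty,w'}}/I_w$, and Step~3 rests on that misidentification.

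Since $w$ is finitely decomposed in $K_\infty$, $\Gal(K_{\infty,w'}/K_w)$ is open in $\Gamma$, hence $\simeq\Z_p$. So $K_{\infty,w'}$ is the \emph{full} unramified $\Z_p$-extension of $K_w$, and
$G_{K_{\infty,w'}}/I_w=\ker(\hat\Z\to\Z_p)=\prod_{q\ne p}\Z_q$.
The group $\overline{\langle\mathrm{Frob}_w^{p^k}\rangle}=p^k\hat\Z$ you wrote is instead the Galois group over a finite unramified layer. Two further claims in Step~3 also fail:
\begin{itemize}
\item The integer $k$ is fixed by the decomposition group, so ``taking $k$ large'' is not available.
\item The claim $\mathrm{Frob}^{p^k}\to1$ on a finite group is false in general: if Frobenius acts by $-1$ on $E$, then $\mathrm{Frob}^{p^k}=\mathrm{Frob}$ for all $k$.
\end{itemize}

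With your $B_w=(A^{I_w})^{\mathrm{Frob}^{p^k}=1}$, the asserted equality actually fails. Take an elliptic curve with split multiplicative reduction at $\ell\equiv1\bmod p$, with $p\nmid\ord_\ell(\Delta)$ and $p^s\|\ell-1$. Then $A^{I_w}=\mu_{p^\infty}$, with Frobenius acting by $\zeta\mapsto\zeta^\ell$, so $H^1_\ur(K_w,A)=0$. However, $(A^{I_w})^{\mathrm{Frob}^{p^k}=1}=\mu_{p^{s+k}}$, whose Frobenius-coinvariants have order $p^s$. Your fallback via $|M^\gamma|=|M_\gamma|$ only does bookkeeping inside $B_w$; it never links $B_w$ to $A^{I_w}$.

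The missing idea is that the pro-prime-to-$p$ group $G_{K_{\infty,w'}}/I_w$ has trivial cohomology with $p$-primary coefficients. The paper uses exactly this, in two steps:
\begin{itemize}
\item The proof of Proposition~\ref{prop:cp} with $\alpha=\1$, together with surjectivity of $\gamma-1$ on $(B_w)_\div$ (from \cite[Lemma~4.3]{lei-mastella-zhao}), gives $c_w^{(p)}=\#\ker\bigl(H^1(K_w,A)\to H^1(K_{\infty,w'},A)\bigr)$.
\item The vanishing $H^1(G_{K_{\infty,w'}}/I_w,A^{I_w})=0$ makes $H^1(K_{\infty,w'},A)\to H^1(I_w,A)$ injective. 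Hence that kernel equals $\ker\bigl(H^1(K_w,A)\to H^1(I_w,A)\bigr)=H^1_\ur(K_w,A)$.
\end{itemize}

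Your Steps~2--3 can be repaired along the same lines. Write $A^{I_w}=B_w\oplus C$, where $B_w$ is the part on which $\prod_{q\ne p}\Z_q$ acts trivially and $C$ is the kernel of the averaging idempotent. Then $C$ has zero Frobenius-coinvariants, so $(A^{I_w})_{\mathrm{Frob}}=(B_w)_{\mathrm{Frob}}=(B_w)_{\gamma_{w'}}$. Moreover, $D\cap B_w=D^{\prod_{q\ne p}\Z_q}$ is a direct summand of $D$, hence divisible. So the ``main obstacle'' you flag disappears once $B_w$ is correctly identified.
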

	\begin{proof}
		By taking $\alpha = \1$, the trivial character, the proof of Proposition \ref{prop:cp} shows that
		\[
			c_w^{(p)} = \#\left(A^{G_{K_{\infty,w'}}}/(\gamma-1)A^{G_{K_{\infty,w'}}}\right) = \#\ker(H^1(K_w,A)\to H^1(K_{\infty,w'},A)),
		\]
		thanks to \cite[Lemma 4.3]{lei-mastella-zhao}. Note that $K_{\infty,w'}$ is the unique unramified $\ZZ_p$-extension of $K_w$ by \cite[Proposition 13.2]{washington:book}. Denoting by $I_w$ the inertia group of $G_{K_w}$, we see that $G_{K_{\infty,w'}}/I_w$ is the prime-to-$p$ part of $G_{K_w}/I_w\simeq \hat{\ZZ}$. As such, $H^1(G_{K_{\infty,w'}}/I_w,A^{G_{K_{\infty,w'}}})=0$ as $A$ is $p$-primary. Consequently, the restriction $H^1(K_{\infty,w'},A)\to H^1(I_w,A)$ is injective, whereby
		\[
        \#\ker(H^1(K_w,A)\to H^1(K_{\infty,w'},A)) = \#\ker(H^1(K_w,A)\to H^1(I_w,A)) = \#H^1_\ur(K_w,A),
		\]
        as desired.
	\end{proof}

    \begin{proof}[Proof of Theorem \ref{thm:euler-char}]
        In the discussion preceding Proposition \ref{prop:cp} we have seen that
        \[
            |\Sel_\bullet(K_\infty,A)[\fP_\alpha]/\im(\Sel_\bullet(K,A_\alpha))|\le \prod_{w\mid Np} |\ker(r_w)| = \prod_{w\mid N^+} c_w^{(p)},
        \]
        where the last equality is by Proposition \ref{prop:cp} and Corollary \ref{cor:tama}. Using Proposition \ref{prop:sel-sha}, we find
        \[
            |\cX/\fP_{\alpha^{-1}}| \le |\Sha_\bullet(K,A_\alpha)| \prod_{w\mid N^+} c_w^{(p)}.
        \]
        The proof is then concluded by invoking Lemma \ref{lem:euler-char}.
    \end{proof}

\section{Bounding signed Selmer groups}
The main goal of this section is to show how to apply Howard's results on Kolyvagin systems established in \cite{howard04-gl2} to bound the signed Selmer groups.

Let $\bullet \in \{\sharp,\flat\}$. Throughout this section, we assume the following hypothesis holds:
\begin{align}\tag{nontors.}\label{hyp:nontors}
    \bka_1^\bullet\in H^1(K,\TT) \text{ is not $\Lambda$-torsion.}
\end{align}

Thus, assuming \eqref{hyp:nontors}, for a fixed character $\alpha:\Gamma\to\cO^\times$  such that $\alpha\equiv1\mod \varpi^m$, up to finitely many exceptions, $\bka_1^\bullet(\alpha)$ is non-torsion over $\cO$. 
Write $\FF = \cO/\varpi$ and $\bar T = T/\varpi T$. Denote by $\rho_f:G_K\to \Aut_\cO(T)\simeq \GL_2(\cO)$ the Galois representation attached to $f$, and $\bar \rho_f: G_K\to \Aut_\FF(\bar T) \simeq \GL_2(\FF)$ its reduction modulo $\varpi$. Let $K_T/K$ be the Galois extension cut out by $\rho_f$; note that by the existence of the Weil pairing, $K_T(\mu_{p^\infty}) = K_T$.

\begin{lem}\label{lem:central}
    The image of $\rho_f$ contains the set of scalar matrices $\{\lambda I_2:\lambda\in\mu_{p-1}\}$.
\end{lem}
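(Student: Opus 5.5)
The image of $\rho_f$ restricted to $G_K$ should contain enough of the inertia at $p$ to produce these scalars. The plan is to use the local structure at $p$ already exploited in Lemma~\ref{lem:T-no-invariant}, and then lift from the residual level using the Teichmüller character.

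First, work at a place $w$ of $K$ above $p$; since $p$ is unramified in $K$, the inertia group $I_w\subset G_K$ is the full inertia group of $\Q_p$. By \cite[Theorem~2.6]{edixhoven92} (Fontaine's theorem for supersingular reduction, weight $2$), $\bar T|_{I_w}$ is semisimple and given by $\omega_2\oplus\omega_2^p$, where $\omega_2$ is the fundamental character of level $2$. Restricting to the tame inertia quotient, pick $\tau\in I_w$ whose image generates the tame quotient of order prime to $p$ mapping onto $\FF_{p^2}^\times$. Then $\omega_2^{p+1}=\omega$ is the mod $p$ cyclotomic character. Hence $\tau^{p+1}$ acts on $\bar T$ by $\omega_2(\tau)^{p+1}\cdot\mathrm{diag}(1,\omega_2(\tau)^{p^2-1})=\omega(\tau)I_2$, a scalar. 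Moreover $\omega(\tau)$ generates $\FF_p^\times$.

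Second, lift to characteristic zero. The closed subgroup of $\rho_f(G_K)$ generated by $g=\rho_f(\tau^{p+1})$ is procyclic. Taking $h=\lim_k g^{p^{k}}$ along a suitable subsequence (the prime-to-$p$ component of $g$) gives an element of $\rho_f(G_K)$ of order dividing $p-1$, since $\bar g$ has order dividing $p-1$. The reduction of $h$ is still $\omega(\tau)I_2$. To check that $h$ is scalar, note that $h$ lies in $\rho_f(I_w)$, where $\rho_f|_{I_w}$ of finite prime-to-$p$ order is diagonalizable over $\cO$ (after extending scalars) with eigenvalues the Teichmüller lifts of its residual eigenvalues. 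Both residual eigenvalues equal $\omega(\tau)$, so both eigenvalues of $h$ equal the Teichmüller lift $[\omega(\tau)]$. A semisimple matrix with equal eigenvalues is scalar, so $h=[\omega(\tau)]I_2$, which generates $\{\lambda I_2:\lambda\in\mu_{p-1}\}$. The order of $h$ is prime to $p$, which gives semisimplicity. Alternatively, one can argue via $\det\rho_f=\chi_{\rm cyc}$ together with $h^2=\det(h)I_2$.

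The main obstacle is the first step: pinning down the precise inertial action in the non-ordinary case uniformly over the local Hecke field $F$ rather than just $\Q_p$. I would handle this by invoking Edixhoven's description for the $\varpi$-adic representation with $a_p\in\varpi\cO$, as is done in the proof of Lemma~\ref{lem:T-no-invariant}, and by checking that the exponent $p+1$ and the Teichmüller lifting argument go through verbatim with $\cO$-coefficients.
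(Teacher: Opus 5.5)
Your proof is correct and follows the paper's route: Edixhoven's description $\bar\rho_f|_{I_w}\simeq\omega_2\oplus\omega_2^{p}$ shows that the $(p+1)$-st power of a suitable tame inertia element reduces to the scalar $\omega(\tau)I_2$, and a $p$-power limit then lifts this scalar to $\rho_f(G_K)$. The paper shortens your second step by writing the lift as $g=\nu(1+\varpi x)$ with $\nu$ the central Teichmüller scalar, so that $g^{p^n}=\nu(1+\varpi x)^{p^n}\to\nu$ directly; this makes your eigenvalue and semisimplicity check unnecessary, as well as the aside about $h^2=\det(h)I_2$, which would not by itself prove scalarity.
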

\begin{proof}
    Identifying $\mu_{p-1}$ with the set $\{\lambda I_2:\lambda\in\mu_{p-1}\}\subset \GL_2(\cO)$ and also its image in $\GL_2(\FF)$, we first show $\im(\bar\rho_f)\supset \mu_{p-1}$. Since $p$ is unramified in $K$, by \cite[Theorem~2.6]{edixhoven92}, the inertia group at a prime above $p$ acts as
    \[
    \begin{bmatrix}
        \omega_2&0\\0&\omega_2^{p}
    \end{bmatrix},
    \]
    where $\omega_2$ is a fundamental character of level $2$, and is therefore of order $p^2-1$. As
     \[
    \begin{bmatrix}
        \omega_2&0\\0&\omega_2^{p}
    \end{bmatrix}^{p+1}=\begin{bmatrix}
        \omega&0\\0&\omega
    \end{bmatrix},
    \]
   where $\omega$ is a fundamental character of level one, and is of order $p-1$. This shows that $\im(\bar\rho_f)$ contains $\mu_{p-1}$.
		
		Let $\nu\in \mu_{p-1}$, and take $g\in \im(\rho_f)$ to be a lift of $\nu$. Then, as an element in $\GL_2(\cO)$, we can write
		\[
			g = \nu(1+\varpi x) \quad \text{ for some }x\in M_{2\times 2}(\cO).
		\]
		Since $\nu$ is in the center of $\GL_2(\cO)$, it follows that
		\[
			\lim_{n\to \infty} g^{p^n}= \lim_{n\to \infty} \nu(1+\varpi x)^{p^n} = \nu,
		\]
		which belongs to the closed subgroup $\im(\rho_f)\subseteq \GL_2(\cO)$, as desired. 
\end{proof}

\begin{cor}\label{cor:hypothesis-1}
		We have $H^1(K_T/K,\bar T) = 0$.
	\end{cor}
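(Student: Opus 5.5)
The plan is the standard "central element" argument, using Lemma~\ref{lem:central}. Since $p\ge5$, we have $p-1\ge 4 > 2$. So we can pick $\lambda\in\mu_{p-1}$ with $\lambda\not\equiv 1\bmod\varpi$; for instance, any generator of $\mu_{p-1}$ works, because its reduction in $\FF^\times$ has order $p-1>1$. By Lemma~\ref{lem:central}, there is an element $g_0\in\Gal(K_T/K)$ with $\rho_f(g_0)=\lambda I_2$. Its image in $\GL_2(\FF)$ is the scalar $\bar\lambda I_2$, and $\bar\lambda\neq1$.

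Let $C\subseteq\Gal(K_T/K)$ be the closed subgroup topologically generated by $g_0$. Since $\rho_f(g_0)$ has finite order dividing $p-1$ and $\rho_f$ is faithful on $\Gal(K_T/K)$, $C$ is a finite cyclic group of order prime to $p$. It is also central, because $\rho_f(g_0)$ is scalar and $\rho_f$ is injective on $\Gal(K_T/K)$; in particular $C$ is normal. Note that $\Gal(K_T/K)$ is profinite, so we use continuous cohomology throughout.

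Now apply inflation--restriction to the normal subgroup $C$:
\[
0\to H^1(\Gal(K_T/K)/C,\bar T^{C})\to H^1(K_T/K,\bar T)\to H^1(C,\bar T)^{\Gal(K_T/K)/C}.
\]
The left term vanishes because $\bar T^{C}=\bar T^{g_0=1}=\{x:(\bar\lambda-1)x=0\}=0$, using that $\bar\lambda-1$ is a unit in $\FF$. The right term vanishes because $C$ has order prime to $p$ and $\bar T$ is a $p$-group, so $H^1(C,\bar T)=0$. Hence $H^1(K_T/K,\bar T)=0$.

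There is no real obstacle beyond what Lemma~\ref{lem:central} already supplies. The only point needing care is that $\bar\lambda\neq 1$, which is where $p\ge5$ (indeed $p>2$) is used; alternatively, one could take $C=\mu_{p-1}$ itself, which also has order prime to $p$ and acts without nonzero fixed vectors.
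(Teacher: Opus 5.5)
Your proof is correct and follows essentially the paper's argument. The paper applies inflation--restriction to the central scalar subgroup $\mu_{p-1}\subseteq\im(\rho_f)$ supplied by Lemma~\ref{lem:central}; the first term vanishes because $(\FF^2)^{\mu_{p-1}}=0$, and the last because $\mu_{p-1}$ has order prime to $p$. Your $C$, generated by a lift of a generator of $\mu_{p-1}$, is exactly this subgroup, and your check that $\bar\lambda\neq 1$ simply spells out why the fixed vectors vanish.
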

	\begin{proof}
		By definition, $H^1(K_T/K,\bar T) = H^1(\im(\rho_f),\FF^2)$. Lemma \ref{lem:central} tells us that $\im(\rho_f)$ contains the scalar subgroup $\mu_{p-1}$. The inflation-restriction gives an exact sequence
		\[
			H^1(\im(\rho_f)/\mu_{p-1}, (\FF^2)^{\mu_{p-1}})\to H^1(\im(\rho_f),\FF^2) \to H^1(\mu_{p-1},\FF^2).
		\]
		The first entry is zero as $(\FF^2)^{\mu_{p-1}} = 0$, so is the last since $\FF^2$ is a $p$-group. This yields the desired vanishing.
	\end{proof}

From now on, the following hypothesis on $\bar T$ will be in effect:
\begin{align}\tag{irr.}\label{hyp:irr}
    \text{If $p$ is inert in $K$, the residual $G_K$-representation }\bar T\text{ is absolutely irreducible.}
\end{align}

\begin{rem}\label{rem:irreducibility}When $p$ splits in $K$, the irreducibility of $\bar T$ as a $G_{\QQ_p}$-representation is a classical result of Fontaine (see \cite[Theorem 2.6]{edixhoven92}). In particular, $\bar T$ is also absolutely irreducible as a $G_K$-representation. 

If $p$ is inert, the absolute irreducibility of $\bar T$ might not hold, for example, when $f$ is a CM form. If $f$ is not a CM form, it follows from \cite[Theorem~2.1]{ribet:rankin} that the hypothesis \eqref{hyp:irr} is valid for almost all $p$. If $\cA_f$ is a semi-stable elliptic curve defined over $\QQ$, the image of the $G_\QQ$-representation $\cA_f[p]$ is $\GL_2(\FF_p)$ by \cite[Proposition 2.1]{edixhoven97}. As $G_K$ is a subgroup of $G_\QQ$ of index $2$, the hypothesis \eqref{hyp:irr} is valid in this case. 
\end{rem}
Given a subset $\cL'\subset\cL$, we let $\cN(\cL')$ be the set of squarefree products of elements of $\cL'$.
For a positive integer $e$, put
\[
\cL^{(e)}=\{\ell\in\cL_0:a_\ell\equiv \ell+1\equiv 0\mod \varpi^e\}.
\]
The following theorem is a generalization of \cite[Theorem~1.3.1]{bcgs}.
\begin{thm}\label{thm:howard}
    Assume \eqref{hyp:irr}, and let $\alpha: \Gamma\to \cO^\times$ be a character. Let $\cL'\subset \cL$ such that $\cL^{(e)}\subset \cL'$ for $e\gg0$. Let $\cN' = \cN(\cL')$. Suppose that there is a collection of cohomology classes
    \[
    \{\kappa_n\in H^1(K,T_\alpha/I_nT_\alpha) :n\in\cN'\}
    \]
    with $\kappa_1\ne 0$ and that there exists an integer $t\ge0$, independent of $n$, such that $\{\varpi^t\kappa_n\}$ is a Kolyvagin system for $(T_\alpha,\cF_\bullet,\cL')$. Then, 
    $\Sel_\bullet(K,T_\alpha)$ is a free $\cO$-module of rank one, and there is a finite $\cO$-module $M$ such that
    \[
    \Sel_\bullet(K,A_{\alpha})\cong F/\cO\oplus M\oplus M
    \]
    with $\len_{\cO}(M)\le\ind(\kappa_1)$, where $\ind(\kappa_1)=\len_{\cO}(\Sel_\bullet(K,T_{\alpha})/\cO\cdot \kappa_1)$.
\end{thm}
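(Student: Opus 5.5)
The plan is to reduce the statement to Howard's structure theorem for Kolyvagin systems over discrete valuation rings, \cite[Theorem~1.6.1]{howard04} (in the form of \cite[Theorem~1.3.1]{bcgs}, which allows a weakened Kolyvagin system $\{\varpi^t\kappa_n\}$ and a prime set containing $\cL^{(e)}$), applied to the triple $(T_\alpha,\cF_\bullet,\cL')$. To do this, we first verify Howard's standing hypotheses (H0)--(H5) for $T_\alpha$ with the signed Selmer structure.

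\textbf{(H0), (H1).} Residually $T_\alpha/\varpi T_\alpha = \bar T$, because $\alpha$ reduces trivially modulo $\varpi$. Hence absolute irreducibility follows from Remark~\ref{rem:irreducibility} in the split case and from \eqref{hyp:irr} in the inert case.

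\textbf{The Chebotarev-type hypothesis.} The vanishing of $H^1(K_T/K,\bar T)$ is Corollary~\ref{cor:hypothesis-1}. Together with the existence of an element of $G_K$ acting on $T_\alpha$ with eigenvalue $1$ of multiplicity one (and complex conjugation, as in Howard), this supplies the Chebotarev arguments that produce useful primes in $\cL^{(e)}$ for $e\gg0$.

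\textbf{Local conditions.} The unramified conditions away from $p$ are Cartesian in the usual way. At $w\mid p$, we need the condition $H^1_\bullet(K_w,T_\alpha)$, defined as the image of $H^1_\bullet(K_w,\TT)$, to be Cartesian with respect to the quotients $T_\alpha/\varpi^k$. We also need its propagated dual condition on $A_\alpha$ to be exactly its orthogonal complement.

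\textbf{Self-duality.} The self-orthogonality of $\ker\col^\bullet$ from \S\ref{S:orth-split} and \S\ref{S:orth-inert} is a $\Lambda$-adic statement. Using the Weil pairing $T\cong T^*(1)$ and $T_\alpha^*(1)\cong T_{\alpha^{-1}}$, we descend it along $\fP_\alpha$. This gives that $H^1_\bullet(K_w,T_\alpha)$ and $H^1_\bullet(K_w,T_{\alpha^{-1}})$ are exact annihilators under the local Tate pairing, which is the self-duality Howard needs to obtain the $M\oplus M$ shape. Complex conjugation interchanges $\alpha$ and $\alpha^{-1}$ and preserves the signed conditions, as in \cite{howard04-gl2}.

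\textbf{Main obstacle.} I expect the local condition at $p$ to be the hardest step. The issue is showing that the image of $\ker\col^\bullet$ in $H^1(K_w,T_\alpha)$ is a saturated $\cO$-submodule, so that the quotient is torsion-free and the condition is Cartesian. To attack it, I will first use Lemma~\ref{lem:T-no-invariant}: since $H^0(K_{\infty,w'},\bar T)=0$, $H^1(K_w,\TT)$ is $\Lambda$-free and specializes well. I will then use the surjectivity of the Coleman map onto a free $\Lambda'$-module (or onto a submodule of finite index, which is harmless after enlarging $t$). With these, $H^1(K_w,\TT)/\ker\col^\bullet$ embeds into a torsion-free rank-one module, and its reduction modulo $\fP_\alpha$ is torsion-free up to a bounded error. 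The propagation of Cartesianness along the quotients $T_\alpha/\varpi^k$ then follows as in \cite[Lemma~3.7.1]{mazur-rubin}.

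\textbf{Conclusion.} Once the hypotheses are in place, Howard's theorem applies to the nonzero system $\{\varpi^t\kappa_n\}$. It gives that $\Sel_\bullet(K,T_\alpha)$ is free of rank one and that $\Sel_\bullet(K,A_\alpha)\cong F/\cO\oplus M\oplus M$ with $\len M\le \ind(\varpi^t\kappa_1)$. The rank is pinned down by $\kappa_1\neq0$, while torsion-freeness uses $H^0(K,\bar T)=0$. To sharpen the bound to $\ind(\kappa_1)$, we follow \cite[Theorem~1.3.1]{bcgs}. The inequality $\len M\le\ind(\kappa_1)$ comes from comparing with the stub/core rank-one argument: the error term $t$ only enters the existence of primes, and Howard's bound on the module $M$ is computed from $\kappa_1$'s divisibility.
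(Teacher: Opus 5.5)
Your plan is the paper's. You verify Howard's hypotheses for $T_\alpha$ with the signed local conditions: vanishing of $H^1(K_T/K,\bar T)$ via Corollary~\ref{cor:hypothesis-1}, absolute irreducibility from \eqref{hyp:irr}, a self-dual pairing built from the Weil pairing and complex conjugation interchanging $\alpha$ and $\alpha^{-1}$, exact self-orthogonality at $p$ descended from \S\ref{S:orth-split}--\S\ref{S:orth-inert}, and $\Gal(K/\QQ)$-stability. You then apply Howard's Theorem~1.6.1 together with its modification for $t>0$; the paper takes that modification from \cite[Theorem~2.2.2]{howard04-gl2}, and it, not your one-line ``$t$ only enters the existence of primes'' heuristic, is what removes $t$ from the bound.

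Your extra care about saturation at $p$ is legitimate, and surjectivity of $\col^\bullet$ settles it. However, the aside that a finite-index image would be ``harmless after enlarging $t$'' is wrong, since changing $t$ does not alter the Selmer structure. The unproved $G_K$-element with simple eigenvalue $1$ is also not needed, because Howard's Chebotarev step uses complex conjugation.
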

\begin{proof}

    We will verify that the representation $T_\alpha$ and both the $\sharp$- and $\flat$-Selmer structures satisfy the hypotheses (H1-5) in \cite[\S2.2]{howard04-gl2}. Once this is done, the result follows from the general machinery of Theorem 1.6.1 \textit{op.~cit.}, and its modification \cite[Theorem 2.2.2]{howard04-gl2} when $t>0$. 

    Below, we write $\bar M = M/\varpi M$ for $M= T,T_\alpha$. Firstly, Corollary \ref{cor:hypothesis-1} shows that
    \[
        H^1(K_T(\mu_{p^\infty})/K,\bar T)=H^1(K_T(\mu_{p^\infty})/K,\bar T_\alpha) = 0,
    \]
    which means that (H1) holds. Concerning (H2), we have $\bar T_\alpha = \bar T$, so the absolute irreducibility is ensured by \eqref{hyp:irr}, and $\bar T$ already comes with a $G_{\Q}$-action. The existence of the equivariant pairing in (H3), to be established below, then ensures the plus/minus $\tau$-eigenspace to be one-dimensional.

    We now turn our attention to (H3), which states that there is a perfect symmetric $\cO$-linear pairing
    \[
    (\ ,\ ):T_\alpha\times T_\alpha\to \cO(1)
    \]
    such that $(a^\sigma,b^{\tau\sigma\tau})=(a,b)^\sigma$ for $a,b\in T$ and $\sigma\in G_K$, where $\tau$ is the complex conjugation. Further, the induced pairing on $\bar T_\alpha\times \bar T_\alpha$ satisfies $(a^\tau,b^\tau)=(a,b)^\tau$.
    The Weil pairing induces a $G_K$-equivariant perfect pairing $\langle\ ,\ \rangle$ on $T_\alpha\times T_{\alpha^{-1}}$. Since $\alpha$ is an anticyclotomic character, we have an isomorphism $\jmath: T_{\alpha}\to T_{\alpha^{-1}}$ sending $y\otimes t\in T\otimes \Z_p(\alpha)$ to $y^\tau\otimes t\in T\otimes\Z_p(\alpha^{-1})$, such that $\jmath(x^\sigma) = \jmath(x)^{\tau\sigma\tau}$ for $\sigma \in G_K$. We can then define $(x,y)$ to be $\langle x,\jmath(y)\rangle$ for $x,y\in T_\alpha$. The desired properties of the pairing can then be checked using the fact that $\tau$ acts on $\cO(1)$ by $-1$.

    For (H4), we need to verify that under $(\ ,\ )$, the local condition $H^1_\bullet(K_v,T_\alpha)$ is orthogonal complement of $H^1_\bullet(K_v,T_{\alpha^{-1}})$, which follows from the discussions in \S\ref{S:orth-split} and \S\ref{S:orth-inert}. Finally, (H5) requires that $\bigoplus_{w|v}H^1_\bullet(K_w,\bar T_\alpha)$ is stable under the action of $\Gal(K/\QQ)$. This holds since the complex conjugation sends $H^1_\bullet(K_w,\bar T_\alpha)$ to $H^1_\bullet(K_{\bar w},\bar T_\alpha)$, and vice versa.
\end{proof}

\begin{cor}\label{cor:rank-sel}
		Assume \eqref{hyp:irr} and \eqref{hyp:nontors} hold. Then, we have the equalities $$\rank_\Lambda(\Sel_\bullet(K,\TT)) = \corank_\Lambda(\Sel_\bullet(K_\infty,A)) = 1.$$
	\end{cor}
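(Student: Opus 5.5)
We specialize at a well-chosen character $\alpha$, apply Theorem~\ref{thm:howard} to get rank one there, and transfer this back to $\Lambda$ via the control theorem (Proposition~\ref{prop:control}) and the structure theory of finitely generated $\Lambda$-modules.

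\emph{Step 1 (choice of $\alpha$).} Hypothesis \eqref{hyp:nontors} says $\bka_1^\bullet$ is not $\Lambda$-torsion. Since $H^1(K,\TT)$ is finitely generated over $\Lambda$, $\bka_1^\bullet$ has nonzero image in the specialization $H^1(K,T_\alpha)$ for all but finitely many $\alpha$. Hence $\bka_1^\bullet(\alpha)\neq 0$ for infinitely many characters $\alpha$ with $\alpha\equiv 1\bmod\varpi^m$ and $m$ arbitrarily large.

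\emph{Step 2 (rank one at $\alpha$).} By Proposition~\ref{prop:kappa-kolyvagin}, $\{\varpi^d\bka^\bullet_n\}$ is a Kolyvagin system for $(\TT,\cF_\bullet,\cL)$. Its image under $\pi_\alpha$ is then a Kolyvagin system for $(T_\alpha,\cF_\bullet,\cL)$. This uses that the signed local conditions on $T_\alpha$ are by definition images of those on $\TT$, and that the transverse and finite conditions are compatible with specialization. The main point to check is that the finite–singular maps commute with $\pi_\alpha$, which holds because $\alpha$ is unramified at primes $\ell\nmid p$ and $\ell$ splits completely in $K_\infty$ for inert $\ell$. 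Taking $\kappa_n=\bka_n^\bullet(\alpha)$ with $\kappa_1\neq 0$, Theorem~\ref{thm:howard} gives that $\Sel_\bullet(K,T_\alpha)$ is free of rank one over $\cO$ and that $\Sel_\bullet(K,A_\alpha)$ has $\cO$-corank one.

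\emph{Step 3 (discrete side).} By Proposition~\ref{prop:control}, $\corank_\Lambda\Sel_\bullet(K_\infty,A)=\corank_\cO\Sel_\bullet(K,A_\alpha)$ for all but finitely many $\alpha$. Choosing $\alpha$ as in Step 1 outside this exceptional set gives corank one.

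\emph{Step 4 (compact side).} $H^0(K_\infty,A)=0$ by Lemma~\ref{lem:T-no-invariant}, so $H^1(K,\TT)$ is $\Lambda$-torsion-free, and hence so is $\Sel_\bullet(K,\TT)$. Since $\Lambda$ is a regular local ring of dimension two, $\Sel_\bullet(K,\TT)$ is free up to pseudo-null modules. Therefore, for all but finitely many $\alpha$, the specialization $\Sel_\bullet(K,\TT)/\fP_\alpha$ injects, up to finite error, into $\Sel_\bullet(K,T_\alpha)$. This is the compact control statement: the kernel of $H^1(K,\TT)/\fP_\alpha\to H^1(K,T_\alpha)$ vanishes because $H^2$-torsion is finite away from finitely many $\alpha$, and the local conditions are compatible because $H^1_\bullet(K_w,T_\alpha)$ is the image of $H^1_\bullet(K_w,\TT)$. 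Thus $\rank_\Lambda\Sel_\bullet(K,\TT)\le\rank_\cO\Sel_\bullet(K,T_\alpha)=1$. Conversely, $\bka_1^\bullet$ lies in $\Sel_\bullet(K,\TT)$ up to a power of $\varpi$ (Lemma~\ref{lem:p-power-kolyvagin} with $n=1$) and is non-torsion, so the rank is at least one.

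The main obstacle is the control estimate in Step 4, specifically that the images of the signed local conditions at $p$ match after specialization. If this direct approach proves awkward, I would instead use the Poitou–Tate/Greenberg-type rank equality: since the signed local conditions are self-orthogonal (\S\ref{S:orth-split}, \S\ref{S:orth-inert}) and each of the two local conditions at $p$ has $\Lambda'$-rank one, one gets $\rank_\Lambda\Sel_\bullet(K,\TT)=\corank_\Lambda\Sel_\bullet(K_\infty,A)$, and Step 3 then gives the equality directly.
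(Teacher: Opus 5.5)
Your proposal is correct and follows essentially the paper's route: show $\bka_1^\bullet(\alpha)\neq 0$ for almost all $\alpha$, apply Theorem~\ref{thm:howard} through Proposition~\ref{prop:kappa-kolyvagin} to get rank one at $\alpha$, and transfer back to $\Lambda$ using Proposition~\ref{prop:control} and structure theory. The one variation is on the compact side: you take the lower bound from the non-torsion class $\varpi^d\bka_1^\bullet\in\Sel_\bullet(K,\TT)$, so the upper bound only needs a finite kernel for specialization, whereas the paper uses the two-sided statement $\rank_\Lambda M=\rank_\cO M(\alpha)$. For that kernel, note that $H^1(K,\TT)/\fP_\alpha\hookrightarrow H^1(K,T_\alpha)$ holds automatically by the long exact sequence ($H^2$ only affects the cokernel), and the step you leave implicit is that $(H^1(K_\Sigma/K,\TT)/\Sel_\bullet(K,\TT))[\fP_\alpha]$ is finite for almost all $\alpha$, being the $\fP_\alpha$-torsion of a finitely generated $\Lambda$-module.
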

	\begin{proof}
		Under the assumption \eqref{hyp:nontors}, for all but finitely many characters $\alpha$, we have $\bka^\bullet_1(\alpha)\ne 0$. The system $\{\bka^\bullet_n(\alpha)\}_{n\in \cN}$ therefore satisfies the hypotheses of Theorem \ref{thm:howard} by Proposition \ref{prop:kappa-kolyvagin}, which implies
		\[
			\rank_\cO \left(\Sel_\bullet(K,T_\alpha)\right) = 1 = \corank_\cO\left(\Sel_\bullet(K,A_\alpha)\right)
		\]
		for all but finitely many $\alpha$. The corollary then follows from the fact that, if $M$ is a finitely generated $\Lambda$-module, then
		\[
			\rank_{\Lambda}M = \rank_\cO M(\alpha)
		\]
		for almost all $\alpha$.
	\end{proof}

\section{Kolyvagin's conjecture}This section is devoted to the proof of Theorem~\ref{thmA}.
Let $\bullet\in \{\sharp,\flat\}$. We begin with some preliminaries on the compact Selmer groups.

    \subsection{Facts about $\Sel_\bullet(K,\TT)$}
    \begin{lem}\label{lem:pseudo-null}
        Assume that \eqref{hyp:nontors} and \eqref{hyp:irr} hold. For all but finitely many $\alpha$, the $\fP_\alpha$-torsion of $\Sel_\bullet(K,\TT)/\Lambda\cdot\bka_1^\bullet$ is finite and has cardinality bounded indepedent of $\alpha$.
    \end{lem}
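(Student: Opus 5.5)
The plan is to use the structure theory of finitely generated $\Lambda$-modules applied to $N:=\Sel_\bullet(K,\TT)/\Lambda\cdot\bka_1^\bullet$. Corollary~\ref{cor:rank-sel} gives $\rank_\Lambda\Sel_\bullet(K,\TT)=1$. By \eqref{hyp:nontors}, $\Lambda\cdot\bka_1^\bullet\simeq\Lambda$. Hence $N$ is a finitely generated torsion $\Lambda$-module. Here finite generation holds because $\Sel_\bullet(K,\TT)\subseteq H^1(K_\Sigma/K,\TT)$, and the latter is finitely generated over the noetherian ring $\Lambda$.

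Choose a pseudo-isomorphism
\[
N\longrightarrow E:=\bigoplus_i \Lambda/(g_i),
\]
with finite kernel $Z_1$ and finite cokernel $Z_2$. Here each $g_i$ is either a power of $\varpi$ or a distinguished polynomial power. The set of $\alpha$ to exclude consists of those for which some non-$\varpi$-power $g_i$ vanishes at $\alpha^{-1}(\gamma)-1$, i.e.\ $\fP_\alpha\mid g_i$. There are only finitely many such $\alpha$, because each $g_i$ has finitely many roots. For the remaining $\alpha$, I would estimate $N[\fP_\alpha]$ through the exact sequence
\[
0\to Z_1[\fP_\alpha]\to N[\fP_\alpha]\to E[\fP_\alpha].
\]
This gives $|N[\fP_\alpha]|\le |Z_1|\cdot|E[\fP_\alpha]|$, and $|Z_1|$ is independent of $\alpha$. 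So it suffices to bound $|E[\fP_\alpha]|$ summand by summand, uniformly in $\alpha$.

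For a summand $\Lambda/(\varpi^k)\simeq(\cO/\varpi^k)[[X]]$, multiplication by the distinguished linear element $X-(\alpha^{-1}(\gamma)-1)$ is injective, since it is injective on $\cO[[X]]$ modulo $\varpi^k$ by Weierstrass division. Hence this summand contributes nothing to $E[\fP_\alpha]$.

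For a summand $\Lambda/(g)$ with $g$ coprime to $\fP_\alpha$, I would use the fact that $\Lambda/(g)$ has no nonzero finite submodule. Then $(\Lambda/(g))[\fP_\alpha]$ is a submodule of $\Lambda/(g)$ killed by the height-one prime $\fP_\alpha$, which is coprime to $g$, so it is pseudo-null and therefore zero. One checks this concretely: if $gy=(X-a)z$, then $g\mid z$ because $\Lambda$ is a UFD. Thus $E[\fP_\alpha]=0$ for all non-excluded $\alpha$, and $|N[\fP_\alpha]|\le|Z_1|$, a bound independent of $\alpha$.

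The main obstacle is bookkeeping rather than anything conceptual. One must confirm that torsion-ness of $N$ really follows from the rank-one statement together with \eqref{hyp:nontors}, which needs Corollary~\ref{cor:rank-sel} and therefore \eqref{hyp:irr}. One must also be careful that $\fP_\alpha$ is generated by $\gamma-\alpha^{-1}(\gamma)$, so the excluded set is determined by the roots of the $g_i$ at $\alpha^{-1}(\gamma)-1$. Finiteness of $N[\fP_\alpha]$ then comes out of the same argument, since it embeds into the finite group $Z_1$.
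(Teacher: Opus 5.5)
Your proposal is correct and follows essentially the paper's proof. Both arguments use Corollary~\ref{cor:rank-sel} and \eqref{hyp:nontors} to see that $\Sel_\bullet(K,\TT)/\Lambda\bka_1^\bullet$ is $\Lambda$-torsion, map it to an elementary module with finite (pseudo-null) kernel, note that this elementary module has no $\fP_\alpha$-torsion outside finitely many $\alpha$, and bound the $\fP_\alpha$-torsion by that fixed finite kernel. Your version is more explicit, and bounding directly by $|Z_1|$ is slightly cleaner than the paper's identification of $Q'[\fP_\alpha]$ with $(Q')^{\gamma=1}$, which strictly requires $m\gg0$.
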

    \begin{proof}
         Corollary \ref{cor:rank-sel} tells us that $\rank_\Lambda \Sel_\bullet(K,\TT) = 1$. Hence there exists a pseudo-null submodule $Q'$ of $\Sel_\bullet(K,\TT)/\Lambda\bka_1^\bullet$ such that there exists a finite collection of polynomials $f_i\in \Lambda$ and an exact sequence
         \[
            0\to Q' \to \Sel_\bullet(K,\TT)/\Lambda\bka_1^\bullet \to \bigoplus_i \Lambda/(f_i).
         \]
         For all but finitely many $\alpha$, the last term have no $\fP_\alpha$-torsion, so the $\fP_\alpha$-torsion of the middle is isomorphic to $Q'[\fP_\alpha]$, which is the fixed finite group $(Q')^{\gamma = 1}$ when excluding finitely many $\alpha$'s.
    \end{proof}

    Next we will need a control theorem for compact Selmer groups:
    \begin{prop}\label{prop:control-compact}
        For all but finitely many $\alpha$, the specialization map $\Sel_\bullet(K,\TT)\to \Sel_\bullet(K,T_{\alpha})$ has finite cokernel whose cardinality is bounded independent of $\alpha$.
    \end{prop}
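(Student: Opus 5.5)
We factor the specialization map through the global cohomology of $\TT$ and then control the local conditions. Since $\TT/\fP_\alpha\TT = T_\alpha$, the short exact sequence
\[
0\to \TT \xrightarrow{\gamma-\alpha^{-1}(\gamma)} \TT \to T_\alpha\to 0
\]
gives an exact sequence
\[
H^1(K_\Sigma/K,\TT)/\fP_\alpha \hookrightarrow H^1(K_\Sigma/K,T_\alpha)\to H^2(K_\Sigma/K,\TT)[\fP_\alpha].
\]
The module $H^2(K_\Sigma/K,\TT)$ is finitely generated over $\Lambda$. Write it, up to pseudo-isomorphism, as a free part plus torsion. For all but finitely many $\alpha$, the torsion part has no $\fP_\alpha$-torsion except a contribution from a fixed pseudo-null (hence finite-modulo-$\fP_\alpha$, bounded) piece, and the free part has no $\fP_\alpha$-torsion at all. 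So the cokernel of $H^1(K_\Sigma/K,\TT)\to H^1(K_\Sigma/K,T_\alpha)$ is finite and bounded independently of $\alpha$, exactly as in the proof of Lemma~\ref{lem:pseudo-null}. Torsion considerations are harmless here: $H^0(K_\infty,\bar T)=0$ by Lemma~\ref{lem:T-no-invariant}, so $H^1(K_\Sigma/K,\TT)$ is torsion-free.

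Next, take $c\in\Sel_\bullet(K,T_\alpha)$ and lift $\varpi^{s}c$ (with $s$ the uniform bound just obtained) to $\tilde c\in H^1(K_\Sigma/K,\TT)$. We must adjust $\tilde c$ so that it satisfies the local conditions defining $\Sel_\bullet(K,\TT)$. I would phrase this via the diagram
\[
0\to \Sel_\bullet(K,\TT)\to H^1(K_\Sigma/K,\TT)\to \bigoplus_{w\in\Sigma} H^1(K_w,\TT)/H^1_{\cF}(K_w,\TT),
\]
compared with the analogous sequence for $T_\alpha$. By the snake lemma, the cokernel of the map on Selmer groups is bounded by the cokernel of the map on $H^1(K_\Sigma/K,\,\cdot\,)$ together with the $\fP_\alpha$-torsion of the image of $H^1(K_\Sigma/K,\TT)$ in the local quotients. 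That image sits inside $\bigoplus_w H^1(K_w,\TT)/H^1_\cF(K_w,\TT)$.

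The local terms are handled place by place. For $w\mid p$, the condition $H^1_\bullet(K_w,T_\alpha)$ is by definition the image of $H^1_\bullet(K_w,\TT)=\ker\col^\bullet$. The quotient $H^1(K_w,\TT)/\ker\col^\bullet$ embeds via $\col^\bullet$ into $\Lambda'$, which is $\fP_\alpha$-torsion free. Hence no correction is needed beyond the bounded term. For $w\nmid p$, the group $H^1(K_w,\TT)$ is a finitely generated torsion $\Lambda$-module: it is zero or small for $w\mid N^-$, where $w$ splits completely so that $H^1(K_w,\TT)\cong H^1(K_w,T)\otimes\Lambda$ with $H^1(K_w,T)$ finite, and for $w\mid N^+$ its $\fP_\alpha$-torsion is bounded for almost all $\alpha$ by the same structure-theorem argument. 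Moreover, $H^1_\f(K_w,\TT)=H^1(K_w,\TT)$ in the standard convention for $w\nmid p$ (cf.\ Howard), so these places impose no condition at the $\Lambda$-adic level. I expect the main obstacle to be showing that the image of the global-to-local map has $\fP_\alpha$-torsion bounded uniformly in $\alpha$. I would handle it by noting that this image is a finitely generated $\Lambda$-module with only finitely many height-one primes in its torsion support, together with a fixed pseudo-null part. Discarding the finitely many $\alpha$ with $\fP_\alpha$ in that support leaves a bound independent of $\alpha$, which gives the proposition.
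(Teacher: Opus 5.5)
Your first step, controlling the cokernel of $H^1(K_\Sigma/K,\TT)/\fP\to H^1(K_\Sigma/K,T_\alpha)$ by $H^2(K_\Sigma/K,\TT)[\fP]$, agrees with the paper. So does your remark that the places $w\mid N$ impose no $\Lambda$-adic condition. The gap is in the snake-lemma step for the local conditions: the bound you state involves the wrong module.

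Write $\fP$ for the height-one prime with $\TT/\fP\TT=T_\alpha$. Put $\cP=\cP^\Sigma_\bullet(K,\TT)$, let $\cG=\cG^\Sigma_\bullet(K,\TT)$ be the image of $H^1(K_\Sigma/K,\TT)$ in $\cP$, and set $H=\cP/\cG$. Reduce $0\to\Sel_\bullet(K,\TT)\to H^1(K_\Sigma/K,\TT)\to\cG\to 0$ modulo $\fP$ and compare it with the sequence for $T_\alpha$. The snake lemma gives
\[
|\coker| \le |H^2(K_\Sigma/K,\TT)[\fP]|\cdot\bigl|\ker\bigl(\cG/\fP\cG\to\cP^\Sigma_\bullet(K,T_\alpha)\bigr)\bigr|.
\]
The map $\cP/\fP\cP\to\cP^\Sigma_\bullet(K,T_\alpha)$ is injective:
\begin{itemize}
\item at $w\mid p$, because $H^1_\bullet(K_w,T_\alpha)$ is by definition the image of $H^1_\bullet(K_w,\TT)$ and $H^1(K_w,\TT)/\fP\hookrightarrow H^1(K_w,T_\alpha)$;
\item at $w\mid N$, because $H^1_{/\f}(K_w,\TT)=0$.
\end{itemize}
Hence the second factor is $(\cG\cap\fP\cP)/\fP\cG$. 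This is the image of $H[\fP]$ under the connecting map of $0\to\cG\to\cP\to H\to 0$.

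In terms of your lifting: $\loc(\tilde c)$ lies in $\cG\cap\fP\cP$. You can remove it by changing $\tilde c$ by an element of $\fP H^1(K_\Sigma/K,\TT)$ only if it lies in $\fP\cG$.

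The $\fP$-torsion of the image $\cG$, which you call the main obstacle, does not enter the cokernel at all. It only affects the kernel of $\Sel_\bullet(K,\TT)/\fP\to H^1(K_\Sigma/K,\TT)/\fP$. It is in fact zero: your Coleman-map remark shows $\cP[\fP]=0$, hence $\cG[\fP]=0$. So the obstacle you plan to overcome is empty, while the real one is left untouched. For a toy example, take $H^1(K_\Sigma/K,\TT)=\cP=\Lambda$ with localization equal to multiplication by a generator of $\fP$. Then $\cG$ is torsion-free, yet $\Sel_\bullet(K,\TT)=0$ while the $T_\alpha$-Selmer group is all of $H^1/\fP\cong\cO$.

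The fix is to run your structure-theorem argument on the cokernel $H$ instead of the image $\cG$. $H$ is a finitely generated $\Lambda$-module, being a quotient of $\cP$; the paper instead uses Poitou--Tate to embed it in $\Sel_\bullet(K_\infty,A)^\vee$. So for all but finitely many $\alpha$, $H[\fP]$ is the $\fP$-torsion of the maximal pseudo-null (finite) submodule of $H$, and is bounded independently of $\alpha$.

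With this substitution your outline becomes essentially the paper's proof. The paper separates the same two contributions, $H^2(K_\Sigma/K,\TT)[\fP]$ and $H[\fP]$, through the auxiliary module $S_\bullet(K,\TT)$. It uses $\cP[\fP]=0$ exactly to identify $\Sel_\bullet(K,\TT)/\fP$ with $\ker\bigl(H^1(K_\Sigma/K,\TT)/\fP\to\cG/\fP\cG\bigr)$.
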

    To facilitate the proof we introduce the following group:
    \[
        S_\bullet(K,\TT) = \ker\left(
		H^1(K_\Sigma/K,\TT)\to \prod_{w\mid Np} 
		\frac{H^1(K_w,\TT)}{H^1_\bullet(K_w,\TT)+ \fP_{\alpha^{-1}}H^1(K_w,\TT)}
		\right).
    \]
    Note that $S_\bullet(K,\TT)$ contains $\fP_{\alpha^{-1}} H^1(K_\Sigma/K,\TT)$. 
    \begin{lem}\label{lem:pre-control-compact}
        For all but finitely many $\alpha$, the natural map $S_\bullet(K,\TT)\to \Sel_\bullet(K,T_\alpha)$ has finite cokernel whose cardinality is bounded independent of $\alpha$.
    \end{lem}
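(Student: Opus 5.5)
We compare $S_\bullet(K,\TT)$ with $\Sel_\bullet(K,T_\alpha)$ through the exact sequence
\[
0\to H^1(K_\Sigma/K,\TT)/\fP_{\alpha^{-1}}\to H^1(K_\Sigma/K,T_\alpha)\to H^2(K_\Sigma/K,\TT)[\fP_{\alpha^{-1}}]\to 0,
\]
which comes from the long exact cohomology sequence of $0\to\TT\xrightarrow{\gamma-\alpha^{-1}(\gamma)}\TT\to T_\alpha\to0$. The first map is injective because $H^0(K_\Sigma/K,T_\alpha)=0$ (Lemma~\ref{lem:T-no-invariant}). The module $H^2(K_\Sigma/K,\TT)$ is finitely generated over $\Lambda$. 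Arguing as in Lemma~\ref{lem:pseudo-null}, we split off its torsion part via the structure theorem. For all but finitely many $\alpha$, the elementary torsion factors $\Lambda/(f_i)$ have no $\fP_{\alpha^{-1}}$-torsion. The pseudo-null part $Q''$ contributes $Q''[\fP_{\alpha^{-1}}]$, which equals the fixed finite group $Q''^{\gamma=1}$ once $m\gg0$. The free part has no torsion at all. Hence the cokernel of $H^1(K_\Sigma/K,\TT)\to H^1(K_\Sigma/K,T_\alpha)$ is finite, with cardinality bounded independently of $\alpha$.

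Next take $x\in\Sel_\bullet(K,T_\alpha)$. Replacing $x$ by $Cx$, where $C$ is the uniform bound on the cokernel just obtained, we may assume $x$ lifts to some $\tilde x\in H^1(K_\Sigma/K,\TT)$. We now check when $\tilde x$ lies in $S_\bullet(K,\TT)$. At $w\mid p$, the condition $H^1_\bullet(K_w,T_\alpha)$ is by definition the image of $H^1_\bullet(K_w,\TT)$. So if $\loc_w(x)\in H^1_\bullet(K_w,T_\alpha)$, then $\loc_w(\tilde x)$ lies in $H^1_\bullet(K_w,\TT)$ plus the kernel of specialization $H^1(K_w,\TT)\to H^1(K_w,T_\alpha)$, and that kernel is $\fP_{\alpha^{-1}}H^1(K_w,\TT)$. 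Thus no loss occurs at $w\mid p$.

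The primes $w\mid N$ are the main obstacle. Here $H^1_\f(K_w,T_\alpha)$ need not be the image of $H^1_\f(K_w,\TT)$. We handle this by bounding, for each of the finitely many $w\mid N$, the index of the image of $H^1_\f(K_w,\TT)+\fP_{\alpha^{-1}}H^1(K_w,\TT)$ inside the preimage of $H^1_\f(K_w,T_\alpha)$. Since $w\nmid p$, $H^1_\f(K_w,T_\alpha)$ is finite; it is the torsion of $H^1(K_w,T_\alpha)$, since $H^1_\f(K_w,V_\alpha)=0$. So it suffices to bound $H^1(K_w,T_\alpha)_{\tors}$ uniformly. Local Tate duality identifies this group, up to a finite error, with $H^0(K_w,A_{\alpha^{-1}})/\div$. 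This is where the argument of Proposition~\ref{prop:cp} enters.
\begin{itemize}
\item For $w\mid N^-$, the prime $w$ splits completely in $K_\infty$, so $\alpha$ is trivial on the decomposition group. The group is then $H^0(K_w,A)/\div$, which is independent of $\alpha$.
\item For $w\mid N^+$, we bound it by the finite group $B_w^\vee[\varpi^\infty]^{\gamma=1}$ of Proposition~\ref{prop:cp}, valid for $m\gg0$.
\end{itemize}
Let $C'$ be the product of these uniform bounds. Then $CC'x$ lifts to $S_\bullet(K,\TT)$. It follows that the cokernel of $S_\bullet(K,\TT)\to\Sel_\bullet(K,T_\alpha)$ is annihilated by $CC'$.

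Finally, we convert the annihilator bound into a cardinality bound. The cokernel is a quotient of a finitely generated $\cO$-module, $\Sel_\bullet(K,T_\alpha)$. By Theorem~\ref{thm:howard} this module has rank one, and its torsion part is bounded. Its number of generators is therefore bounded uniformly, which gives the desired uniform bound on the cardinality of the cokernel.
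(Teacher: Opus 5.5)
Your treatment of $w\mid p$ and of the global cokernel $H^2(K_\Sigma/K,\TT)[\fP_{\alpha^{-1}}]$ agrees with the paper. The step at $w\mid N^+$, however, is wrong as written, and even after repair it does not give the lemma as stated.

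\textbf{The bound at $w\mid N^+$.} The group you need to bound is $H^1_\f(K_w,T_\alpha)\cong H^0(K_w,A_\alpha)$. This comes from the Kummer sequence, and it needs $H^0(K_w,V_\alpha)=0$, which does not follow from $w\nmid p$ alone. So what you must control are the $\Gal(K_{\infty,w'}/K_w)$-\emph{invariants} of $B_w(\alpha)$. Proposition~\ref{prop:cp} controls the \emph{coinvariants} $B_w(\alpha)/(\gamma_{w'}-1)$ instead.

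Let $D$ be the divisible part of $B_w$. When $H^0(K_w,A_\alpha)$ is finite, $|H^0(K_w,A_\alpha)|=|\ker(\alpha(\gamma_{w'})\gamma_{w'}-1\mid D)|\cdot|B_w(\alpha)/(\gamma_{w'}-1)|$. The first factor can be nontrivial while $B_w^\vee[\varpi^\infty]=0$. For example, take an elliptic curve with split multiplicative reduction at $\ell\mid N^+$, with $\ell\equiv 1\bmod p$ and Tamagawa number at $\ell$ prime to $p$. Then $B_w=\mu_{p^\infty}$, yet $|H^0(K_w,A_\alpha)|=|\cO/(\ell-1)|>1$ for $m\gg0$. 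So the bound by $B_w^\vee[\varpi^\infty]^{\gamma=1}$ fails.

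\textbf{Uniformity only near the trivial character.} The extra factor equals $|\cO/\det|$ of the dual endomorphism on the free module $D^\vee$. It is constant for $m\gg0$, because $(B_w^\vee\otimes\Q_p)^{\gamma=1}=0$. But it tends to infinity as $\alpha$ approaches any character $\alpha_0$ with $H^0(K_w,V_{\alpha_0})\neq0$, and infinitely many characters lie near such an $\alpha_0$ whenever one exists. Bounding the local defect by $|H^1_\f(K_w,T_\alpha)|$ therefore proves the lemma only for $\alpha\equiv 1\bmod\varpi^m$ with $m\gg0$. That happens to suffice for Theorem~\ref{thm:kolyvagin}, but it is not the stated ``all but finitely many $\alpha$''.

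\textbf{The missing input.} The paper uses $H^1_{/\f}(K_w,\TT)=0$ for $w\mid N$ \cite[\S2.2.4]{perrin-riou:theorie-iwasawa-hauteurs}. This makes the condition at $w\mid N$ in the definition of $S_\bullet(K,\TT)$ vacuous, so $H^1_\f(K_w,T_\alpha)$ never enters. Combined with your argument at $w\mid p$, it makes the map $s$ of local quotients injective. The diagram chase then gives $\coker(\phi)\hookrightarrow\coker(h')\cong H^2(K_\Sigma/K,\TT)[\fP_{\alpha^{-1}}]$, a cardinality bound valid for all but finitely many $\alpha$.

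\textbf{Your final step.} This also makes the generator count unnecessary. As written it relies on Theorem~\ref{thm:howard}, and hence on \eqref{hyp:irr} and \eqref{hyp:nontors}, which the lemma does not need. If you keep an annihilator argument, count generators instead via $\Sel_\bullet(K,T_\alpha)\subseteq H^1(K_\Sigma/K,T_\alpha)$ and $H^1(K_\Sigma/K,T_\alpha)/\varpi\hookrightarrow H^1(K_\Sigma/K,\bar T)$.

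A small correction: the injectivity of $H^1(K_\Sigma/K,\TT)/\fP_{\alpha^{-1}}\to H^1(K_\Sigma/K,T_\alpha)$ follows directly from the long exact sequence. It is not a consequence of $H^0(K_\Sigma/K,T_\alpha)=0$.
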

    \begin{proof}
        	Denote by $\phi$ the specialization map $S_\bullet(K,\TT)\to \Sel_\bullet(K,T_\alpha)$. For $W=T_\alpha$ or $\TT$, put
		\[
            \cP^\Sigma_\bullet(K,W) = \prod_{w\in \Sigma} H^1_{/\bullet}(K_w,W).
		\]
        We consider a diagram with exact rows (the exactness of the first row is by the construction of $S_\bullet(K,\TT)$):
		\begin{align}\label{diag:Sel-T}
			\begin{tikzcd}[ampersand replacement= \&]
				\&S_\bullet(K,\TT) \ar[r] \ar[d,"\phi"]
				\& H^1(K_\Sigma/K,\TT)/\fP_{\alpha^{-1}} \ar[r] \ar[d,"h'"]
				\& \cP^\Sigma_\bullet(K,\TT)/\fP_{{\alpha^{-1}}} \ar[d,"s"]\\
				0 \ar[r] \&\Sel_\bullet(K,T_\alpha) \ar[r] 
				\& H^1(K_\Sigma/K,T_\alpha) \ar[r] 
				\& \cP^\Sigma_\bullet(K,T_\alpha);
			\end{tikzcd}
		\end{align}
		By the snake lemma, the desired properties of $\coker(\phi)$ would follow from the combination of the following assertions:
		\begin{enumerate}
			\item[(a)] $s$ is injective;
			\item[(b)] $|\coker(h')|$ is bounded independent of $\alpha$ for all but finitely many $\alpha$.
		\end{enumerate}
		We first establish (a): We can break down $s$ into local maps
		\[
		s_{w}: H^1_{/\bullet}(K_w,\TT)/\fP_{\alpha^{-1}} \to H^1_{/\bullet}(K_w,T_\alpha)
		\]
		for each place $w$ of $K$, and we will show $\ker(s_w) = 0$ individually. Suppose first $w\mid p$, then the corestriction
		\[
		H^1(K_w,\TT)/\fP_{\alpha^{-1}} \xrightarrow{\sim} H^1(K_w,T_\alpha)
		\]
		is an isomorphism by taking the Pontryagin duals. Hence,
		\[
		H^1_{/\bullet}(K_w,\TT)/\fP_{\alpha^{-1}} = \frac{H^1(K_w,\TT)/\fP_{\alpha^{-1}}}{\im(H^1_\bullet(K_w,\TT)/\fP_{\alpha^{-1}})} \simeq H^1_{/\bullet}(K_w,T_\alpha).
		\]
		Thus, $s_w$ is an isomorphisms if $w\mid p$. Suppose $w\mid N$, then we have $H^1_{/\f}(K_w,\TT) = 0$ by \cite[\S2.2.4]{perrin-riou:theorie-iwasawa-hauteurs}. Hence $\ker(s_w) = 0$ for such $w$ also.
		
		We now turn our attention to (b). Take the $K_\Sigma/K$ cohomology of the short exact sequence
		\[
		0\to \TT\xrightarrow{\times(\gamma - \alpha(\gamma))}
		\TT \to T_\alpha\to 0,	
		\]
		we find $\coker(h') \simeq H^2(K_\Sigma/K,\TT)[\fP_{\alpha^{-1}}]$. Since $H^2(K_\Sigma/K,\TT)$ is a finitely generated $\Lambda$-module, we see that if $P$ is the maximal pseudo-null submodule of $H^2(K_\Sigma/K,\TT)$, $H^2(K_\Sigma/K,\TT)[\fP_{\alpha^{-1}}] = P^{\Gamma}$  for all but finitely many $\alpha$'s.
    \end{proof}

    \begin{proof}[Proof of Proposition \ref{prop:control-compact}]
        The game here is to show that $\im(\Sel_\bullet(K,\TT))\subset H^1(K,\TT)/\fP_{\alpha^{-1}}$ is a submodule of $S_\bullet(K,\TT)/\fP_{\alpha^{-1}}H^1(K,\TT)$ with index bounded independent of $\alpha$. Granting this, since the map $S_\bullet(K,\TT) \to \Sel_\bullet(K,T_\alpha)$ vanishes on $\fP_{\alpha^{-1}}H^1(K,\TT)$, the result then follows from Lemma \ref{lem:pre-control-compact}.

        We now turn to the claim about the index. Recall that the subgroup $S_\bullet(K,\TT)\subset H^1(K_\Sigma/K,\TT)$ enjoys the property
		\begin{align*}
			S_\bullet(K,\TT)/\fP_{\alpha^{-1}}H^1(K_\Sigma/K,\TT) = \ker\left(H^1(K_\Sigma/K,\TT)/\fP_{\alpha^{-1}} \to 
			\cP^\Sigma_\bullet(K,\TT)/\fP_{\alpha^{-1}} \right).
		\end{align*}
		On the other hand, applying the $\fP_{\alpha^{-1}}$-torsion functor on the exact sequence
		\begin{align*}
			0\to \Sel_\bullet(K,\TT)\to H^1(K_\Sigma/K,\TT)\to \cG^\Sigma_\bullet(K,\TT)\to 0,
		\end{align*}
		we find that
		\begin{align}\label{eq:Sel-Palpha}
			\cG^\Sigma_\bullet(K,\TT)[\fP_{\alpha^{-1}}]\to \Sel_\bullet(K,\TT)/\fP_{\alpha^{-1}}\to H^1(K_\Sigma/K,\TT)/\fP_{\alpha^{-1}} \to \cG^\Sigma_\bullet(K,\TT)/\fP_{\alpha^{-1}}
		\end{align}
		is exact. Note that
		\begin{align*}
			\cG^\Sigma_\bullet(K,\TT)[\fP_{\alpha^{-1}}] \subseteq \cP^\Sigma_\bullet(K,\TT)[\fP_{\alpha^{-1}}] = \prod_{w\mid p} H^1_{/\bullet}(K_w,\TT)[\fP_{\alpha^{-1}}],
		\end{align*}
		where the last equality is by the coincidence $ H^1(K_w,\TT) = H^1_\f(K_w,\TT)$ when $w\mid N$ by \cite[\S2.2.4]{perrin-riou:theorie-iwasawa-hauteurs}. Since the Coleman map has image in $\Lambda\otimes_{\Z_p} K_w$, we see that $H^1_{/\bullet}(K_w,\TT)[\fP_{\alpha^{-1}}] = 0$, and so the exactness of \eqref{eq:Sel-Palpha} gives
		\[
		\Sel_\bullet(K,\TT)/\fP_{\alpha^{-1}} = 
		\ker\left(H^1(K_\Sigma/K,\TT)/\fP_{\alpha^{-1}} \to 
			\cG^\Sigma_\bullet(K,\TT)/\fP_{\alpha^{-1}} \right).
		\]
		Thus, for our purpose, it suffices to prove that
        \[
            \ker(\cG^\Sigma_\bullet(K,\TT)/\fP_{\alpha^{-1}}\to \cP^\Sigma_\bullet(K,\TT)/\fP_{\alpha^{-1}})
        \]
        is finite and bounded independent of $\alpha$'s.
        
        Put $H = \cP^\Sigma_\bullet(K,\TT)/\cG^\Sigma_\bullet(K,\TT)$. By the Poitou--Tate duality, $H$ embeds into $\Sel_\bullet(K_\infty,A)^\vee$. As such, by the structure theorem of finitely-generated $\Lambda$-modules, $H[\fP_{\alpha^{-1}}]$ is finite for all but finitely many $\alpha$, and has cardinality bounded independent of $\alpha$. Since
		\begin{align*}
			H[\fP_{\alpha^{-1}}] \to \cG^\Sigma_\bullet(K,\TT)/\fP_{\alpha^{-1}} \to \cP^\Sigma_\bullet(K,\TT)/\fP_{\alpha^{-1}}
		\end{align*}
		is exact, this gives the desired finiteness.
    \end{proof}
	
    \subsection{Kolyvagin's conjecture via the divisibility argument}
    
    Denote by $\ind(\bka_1^\bullet(\alpha))$ the length of the $\cO$-module $\Sel_\bullet(K,T_\alpha)/\cO\cdot \bka_1^\bullet(\alpha)$. We introduce the following reverse inclusion of Conjecture \ref{conj:main}:
    \begin{align}\tag{incl.}\label{hyp:weak-IMC}
        \text{There exists $\mu\in \Z$ such that } \varpi^\mu\Char(\Sel_\bullet(K_\infty,A)^\vee_\tors) \subseteq \Char(\Sel_\bullet(K,\TT)/\Lambda\cdot\bka_1^\bullet)^2.
    \end{align}

\begin{prop}\label{prop:second-ineq}
    Assume \eqref{hyp:nontors}, \eqref{hyp:irr}, and \eqref{hyp:weak-IMC}. There exists a constant $C$ such that for all but finitely many $\alpha$, we have
    \[
        |\cO/\varpi|^{2\ind(\bka_1^\bullet(\alpha))} \le  C |\cO/\varpi|^\mu |\Sha_\bullet(K,A_\alpha)| \prod_{w\mid N^+} c_w^{(p)}.
    \]
    Furthermore, the equality holds when the inclusion in \eqref{hyp:weak-IMC} is an equality.
\end{prop}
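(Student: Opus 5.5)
The plan is to chain three comparisons: the index of $\bka_1^\bullet(\alpha)$ against the specialization at $\alpha$ of the characteristic ideal of $\Sel_\bullet(K,\TT)/\Lambda\bka_1^\bullet$; that characteristic ideal against $\Char(\cX)$ via \eqref{hyp:weak-IMC}; and $\Char(\cX)$ against the signed Shafarevich--Tate group via Theorem~\ref{thm:euler-char}. Throughout, exclude finitely many $\alpha$ and take $m\gg0$, so that all earlier results apply simultaneously. Let $G$ generate $\Char(\Sel_\bullet(K,\TT)/\Lambda\bka_1^\bullet)$. This quotient is $\Lambda$-torsion because, by Corollary~\ref{cor:rank-sel}, $\Sel_\bullet(K,\TT)$ has $\Lambda$-rank one, and $\bka_1^\bullet$ is non-torsion by \eqref{hyp:nontors}.

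\emph{First step.} Show that $|\cO/\varpi|^{\ind(\bka_1^\bullet(\alpha))}$ equals $|\cO/G(\alpha(\gamma)-1)|$ up to a factor bounded independently of $\alpha$, with $G$ evaluated at the appropriate $\fP$.
\begin{itemize}
\item Consider the map $\Sel_\bullet(K,\TT)/\fP \to \Sel_\bullet(K,T_\alpha)$. By Proposition~\ref{prop:control-compact} its cokernel is bounded.
\item Its kernel is torsion in a rank-one module. Since $\Sel_\bullet(K,T_\alpha)$ is $\cO$-free of rank one (Theorem~\ref{thm:howard}), the kernel is contained in the $\cO$-torsion of $\Sel_\bullet(K,\TT)/\fP$, which is controlled by the pseudo-null part and hence bounded.
\item Apply the snake lemma to $0\to\Lambda\bka_1^\bullet\to\Sel_\bullet(K,\TT)\to Y\to0$, where $Y=\Sel_\bullet(K,\TT)/\Lambda\bka_1^\bullet$, reduced modulo $\fP$. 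Note $\Lambda\bka_1^\bullet\cong\Lambda$, since $\Sel_\bullet(K,\TT)$ is torsion-free (because $H^0(K_\infty,A)=0$).
\item This gives $|Y/\fP|=|\Sel_\bullet(K,\TT)/(\fP,\bka_1^\bullet)|\cdot|Y[\fP]|^{\pm1}$-type relations, with $Y[\fP]$ bounded by Lemma~\ref{lem:pseudo-null}.
\item Finally, $|Y/\fP|/|Y[\fP]|=|\cO/G(\alpha(\gamma)-1)|$ by the same Greenberg-style argument as in Lemma~\ref{lem:euler-char}.
\end{itemize}
Collecting these gives $|\cO/\varpi|^{\ind(\bka_1^\bullet(\alpha))}\le C_1|\cO/G(\alpha(\gamma)-1)|$, with $C_1$ independent of $\alpha$.

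\emph{Second step.} By \eqref{hyp:weak-IMC}, $G^2$ divides $\varpi^\mu\cF_\bullet$. Therefore
\[
|\cO/G(\alpha(\gamma)-1)|^2\le|\cO/\varpi|^\mu|\cO/\cF_\bullet(\alpha(\gamma)-1)|.
\]
Combining this with Theorem~\ref{thm:euler-char} gives the stated inequality with $C=C_1^2$. I will take care that the $\fP_\alpha$ versus $\fP_{\alpha^{-1}}$ conventions match on both sides, replacing $\alpha$ by $\alpha^{-1}$ if needed; this is harmless since the claim is for all but finitely many $\alpha$.

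\emph{Equality clause.} When \eqref{hyp:weak-IMC} is an equality, each inequality above must be shown to be sharp up to the constant. This means tracking the bounded kernel and cokernel terms as genuine correction factors absorbed into $C$. It also means checking that the inequalities in Lemma~\ref{lem:euler-char} and in the proof of Theorem~\ref{thm:euler-char} become equalities for almost all $\alpha$:
\begin{itemize}
\item $\cX[\fP]$ is eventually constant, namely the $\Gamma$-invariants of the pseudo-null part.
\item $\ker(g)=\ker(r)$, which should follow from surjectivity of the global-to-local map onto the relevant local quotients, via Poitou--Tate and the rank-one structure.
\end{itemize}
I expect this to be the main obstacle: proving $\ker g=\ker r$, i.e.\ that the global image hits all of $\prod_{w|N^+}\ker(r_w)$. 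My first attempt would use the Cassels--Poitou--Tate exact sequence together with the vanishing $\Sel_\bullet(K,\TT)$-torsion. Alternatively, I would interpret ``equality'' as holding up to the bounded constant $C$, accepting a constant on both sides.
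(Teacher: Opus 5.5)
Your argument for the inequality is the paper's proof. The paper also writes $|\cO/\cH_\bullet(\alpha(\gamma)-1)|=|Y/\fP|/|Y[\fP]|$ for $Y=\Sel_\bullet(K,\TT)/\Lambda\bka_1^\bullet$. It bounds $|Y[\fP]|$ by Lemma~\ref{lem:pseudo-null} and the cokernel of $\Sel_\bullet(K,\TT)\to\Sel_\bullet(K,T_\alpha)$ by Proposition~\ref{prop:control-compact}. It then squares and applies \eqref{hyp:weak-IMC} and Theorem~\ref{thm:euler-char}, with $C=(C_1C_2)^2$.

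Your first step can be trimmed. The identity $Y/\fP=\Sel_\bullet(K,\TT)/(\fP\Sel_\bullet(K,\TT)+\Lambda\bka_1^\bullet)$ holds exactly by right exactness, so no snake lemma is needed. For the upper bound on the index only the cokernel of the specialization map matters, not its kernel. Also, $\Lambda\bka_1^\bullet\cong\Lambda$ follows directly from \eqref{hyp:nontors} because $\Lambda$ is a domain; torsion-freeness of the Selmer group is irrelevant.

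On the equality clause, note that the paper's proof does not treat it at all, and it is never used afterwards. The argument only bounds, rather than computes, the discarded factors: $|Y[\fP]|$, the control cokernel, $|\cX[\fP]|$, $|Q^\Gamma|$, and $|\ker r|/|\ker g|$. So the defensible reading is your fallback one: equality up to a factor independent of $\alpha$.

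Under that reading, the obstacle you single out does not arise. You never need $\ker g=\ker r$, because $|\ker r|=\prod_{w\mid N^+}c_w^{(p)}$ is already independent of $\alpha$. What you need are the two reverse bounds.
\begin{itemize}
\item On the discrete side, $|\cX[\fP]|$ and $|Q^\Gamma|$ are bounded for almost all $\alpha$. Lemma~\ref{lem:euler-char} and Proposition~\ref{prop:sel-sha} then give $|\Sha_\bullet(K,A_\alpha)|\le C'|\cO/\cF_\bullet(\alpha(\gamma)-1)|$.
\item On the compact side, $\cP^\Sigma_\bullet(K,\TT)[\fP]=0$, as shown inside the proof of Proposition~\ref{prop:control-compact}. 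Hence $\Sel_\bullet(K,\TT)/\fP$ injects into $H^1(K_\Sigma/K,\TT)/\fP\subseteq H^1(K_\Sigma/K,T_\alpha)$. So $Y/\fP$ injects into $\Sel_\bullet(K,T_\alpha)/\cO\bka_1^\bullet(\alpha)$, giving $|\cO/\cH_\bullet(\alpha(\gamma)-1)|\le|\cO/\varpi|^{\ind(\bka_1^\bullet(\alpha))}$.
\end{itemize}
Combining these with $\varpi^\mu\Char(\cX)=\Char(Y)^2$ gives the two-sided comparison.

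On your remark about $\fP_\alpha$ versus $\fP_{\alpha^{-1}}$: relabeling $\alpha\mapsto\alpha^{-1}$ is not harmless. If the compact and discrete sides genuinely specialize at inverse characters, relabeling does not fix it, since the same $\alpha$ appears on both sides of the inequality. The actual repair is the complex-conjugation symmetry: the isomorphism $\jmath\colon T_\alpha\to T_{\alpha^{-1}}$ from the proof of Theorem~\ref{thm:howard}, together with the $\Gal(K/\QQ)$-stability of the local conditions. This yields $|\Sha_\bullet(K,A_\alpha)|=|\Sha_\bullet(K,A_{\alpha^{-1}})|$.
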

\begin{proof}
    Let $\cH_\bullet$ be the characteristic polynomial of $\Sel_\bullet(K,\TT)/\Lambda\bka^\bullet_1$. By our assumption, $(\cH_\bullet)^2$ divides $\varpi^\mu\cF_\bullet$ (recall $\cF_\bullet$ is the characteristic polynomial of $\Sel_\bullet(K,A_\infty)^\vee$). As in the proof of Lemma \ref{lem:euler-char}, we find
    \begin{align}
        |\cO/\cH_\bullet(\alpha(\gamma)-1)| &= \frac{\left|\Sel_\bullet(K,\TT)/(\fP_{\alpha^{-1}}\Sel_\bullet(K,\TT) + \Lambda\bka_1^\bullet)\right|}{\left|(\Sel_\bullet(K,\TT)/\Lambda\bka_1^\bullet)[\fP_{\alpha^{-1}}]\right|}\\
        &\ge \frac{1}{C_1 C_2}\left|\frac{\Sel_\bullet(K,T_\alpha)}{\cO\cdot\bka_1^\bullet(\alpha)}\right| \\
        &=\frac{1}{C_1C_2}|\cO/\varpi|^{\ind(\bka_1^\bullet(\alpha))}.
    \end{align}
    Here, the constant $C_1$ is from Lemma \ref{lem:pseudo-null} and $C_2$ is from Proposition \ref{prop:control-compact}, neither of which depends on $\alpha$. The proof is then concluded by invoking Theorem \ref{thm:euler-char} and putting $C = (C_1C_2)^2$.
\end{proof}

Following \cite{BLV}, we refer to the situation
\begin{align}\tag{exc.}\label{hyp:exc}
    p\text{ is inert in $K$ and }\bullet = \sharp
\end{align}
the \textit{exceptional} case. Otherwise, we shall say that we are in the \textit{non-exceptional} case:
\begin{align}\tag{nonexc.}\label{hyp:nonexc}
    \text{Either $p$ is split in $K$ or }\bullet = \flat
\end{align}

\begin{thm}\label{thm:kolyvagin}
    Let $\bullet \in \{\sharp,\flat\}$, and assume that \eqref{hyp:nontors} and \eqref{hyp:irr} hold. In the non-exceptional case \eqref{hyp:nonexc}, suppose \eqref{hyp:weak-IMC} is valid. Then, for all $e\in \Z_{>0}$, there exists $n\in \cN^{(e)} = \cN(\cL^{(e)})$ such that $\kappa_n^\Heeg \not\in H^1(K,T/I_n)$. In the remaining exceptional situation \eqref{hyp:exc}, the inclusion in \eqref{hyp:weak-IMC} fails. 
\end{thm}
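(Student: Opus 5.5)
We argue by contradiction in the non-exceptional case. Fix $e$ and suppose $\kappa_n^\Heeg$ vanishes in $H^1(K,T/I_nT)$ for every $n\in\cN^{(e)}$. Choose characters $\alpha$ with $\alpha\equiv 1\bmod\varpi^m$ and $m$ large. We exclude the finitely many bad $\alpha$ from Theorem~\ref{thm:euler-char}, Lemma~\ref{lem:pseudo-null}, Proposition~\ref{prop:control-compact} and \eqref{hyp:nontors}, so that $\bka_1^\bullet(\alpha)\neq0$.

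By Lemma~\ref{lem:cong-class-alpha}, for $n\in\cN^{(m)}\subset\cN^{(e)}$ (taking $m\ge e$) we have $\bka_n^\bullet(\alpha)\equiv \gamma_0\kappa_n^\Heeg$ or $-\delta\kappa_n^\Heeg$ modulo $\varpi^m$. Hence every $\bka_n^\bullet(\alpha)$ is divisible by $\varpi^{m}$ in $H^1(K,T_\alpha/I_nT_\alpha)$, at least after the reductions needed to compare modulo $\varpi^m$. In the non-exceptional case, $\gamma_0$ (split case) or $\delta$ is a unit, or at least has bounded valuation, so no information is lost.

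The key intermediate step is a divisibility lemma, presumably the paper's Lemma~\ref{lem:index}. Suppose a Kolyvagin system $\{\varpi^t\kappa_n\}$ for $(T_\alpha,\cF_\bullet,\cL^{(m)})$ has all $\kappa_n$ with $n\ne1$ divisible by $\varpi^{m-c}$, for a constant $c$ independent of $\alpha$. Then Howard's bound in Theorem~\ref{thm:howard} improves to a strict inequality. Concretely, the structure of Howard's argument (\cite[Theorem~1.6.1]{howard04}) gives $\len M = \ind(\kappa_1)-\partial^{(\infty)}$, where $\partial^{(\infty)}=\min_n \ind(\kappa_n)$-type quantities appear. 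If all higher classes are highly divisible, then $\len_\cO M\le \ind(\kappa_1)-(m-c')$, or even an inequality of the form $2\len M\le 2\ind(\kappa_1)-\text{(something growing with }m)$. I expect establishing this refinement of Theorem~\ref{thm:howard} to be the main obstacle. The route I would try first is to run Howard's induction on the stub modules $\varpi^{j}\kappa_n$ and track the "elementary divisors" bound $\partial^{(i)}-\partial^{(i+1)}$. The first drop $\partial^{(0)}-\partial^{(1)}$ is bounded above by the length of $M$'s largest summand and is forced to be large when $\kappa_\ell$ are divisible while $\kappa_1$ is not.

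We combine this with Proposition~\ref{prop:second-ineq}, using $|\Sha_\bullet(K,A_\alpha)|=|\cO/\varpi|^{2\len M}$ from Theorem~\ref{thm:howard}. This gives
\[
2\ind(\bka_1^\bullet(\alpha))\le \log_{|\cO/\varpi|}C+\mu+2\len M+\textstyle\sum\log c_w^{(p)}\le 2\ind(\bka_1^\bullet(\alpha))-2m+C',
\]
with $C'$ independent of $\alpha$. Letting $m\to\infty$ yields a contradiction, so some $\kappa_n^\Heeg$ with $n\in\cN^{(e)}$ is nonzero.

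In the exceptional case, $\gamma_0=a_p=0$ by Remark~\ref{rem:kappa-sharp}, so $\bka_n^\sharp(\alpha)\equiv0\bmod\varpi^m$ for all $n$ with $M(n)\ge m$, including unconditionally. If \eqref{hyp:weak-IMC} held, the same chain of inequalities would produce the same contradiction without any hypothesis on $\kappa_n^\Heeg$. Hence \eqref{hyp:weak-IMC} must fail in the exceptional case.
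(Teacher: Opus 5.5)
There is a genuine gap, and it sits exactly at the step you flag as the main obstacle. No refinement of Theorem~\ref{thm:howard} is needed, and the mechanism you sketch is off. Under your contradiction hypothesis $1\in\cN^{(e)}$, so $\kappa_1^\Heeg=0$ and $\bka_1^\bullet(\alpha)$ is itself divisible by $\varpi^m$; the scenario ``$\kappa_\ell$ divisible while $\kappa_1$ is not'' never occurs. The paper's argument is simple. Suppose $\bka_n^\bullet(\alpha)=\varpi^m\breve\kappa_n$ with $\breve\kappa_n\in H^1(K,T_\alpha/I_n)$ for \emph{every} $n\in\cN^{(e)}$, including $n=1$. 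Then the family $\{\breve\kappa_n\}$ satisfies the hypotheses of Theorem~\ref{thm:howard} as stated, for three reasons: $\varpi^{d+m}\breve\kappa_n=\varpi^d\bka_n^\bullet(\alpha)$ is a Kolyvagin system by Proposition~\ref{prop:kappa-kolyvagin}; $t=d+m$ does not depend on $n$; and $\breve\kappa_1\neq0$ for all but finitely many $\alpha$. Howard's bound applied to $\breve\kappa_1$ then gives $\len_\cO M\le\ind(\bka_1^\bullet(\alpha))-m$ directly, and Proposition~\ref{prop:second-ineq} finishes as in your final display.

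What you are missing is the input that makes this division possible. Your use of Lemma~\ref{lem:cong-class-alpha} at the character $\alpha$ only shows that the image of $\bka_n^\bullet(\alpha)$ in $H^1(K,T_\alpha/\varpi^m)$ vanishes for $n\in\cN^{(m)}$. That kernel is $H^1(K,T_\alpha/I_n)[\varpi^{M(n)-m}]$, which can be strictly larger than $\varpi^mH^1(K,T_\alpha/I_n)$, since $H^1(K,T_\alpha/I_n)$ need not be free over $\cO/I_n$. So you cannot define $\breve\kappa_n\in H^1(K,T_\alpha/I_n)$ from it. The paper instead applies Lemma~\ref{lem:cong-class-alpha} to the \emph{trivial} character to get $\bka_n^\bullet(\1)=0$; no unit condition on $\gamma_0$ or $\delta$ is needed for this direction. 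It then proves Lemma~\ref{lem:index} $\Lambda$-adically in three steps. First, $\tilde\bka_n^\bullet\bmod I_n$ lies in $\ker\bigl(H^1(K[n],\TT/I_n)\to H^1(K[n],T/I_n)\bigr)=(\gamma-1)H^1(K[n],\TT/I_n)$. Second, the preimage can be taken $\cG(n)$-invariant, because $H^1(K[n],\TT/I_n)$ has no $(\gamma-1)$-torsion by Lemma~\ref{lem:T-no-invariant}. Third, specializing at $\alpha$ turns $\gamma-1$ into an element of $\varpi^m\cO$. This yields honest divisibility $\bka_n^\bullet(\alpha)\in\varpi^mH^1(K,T_\alpha/I_n)$ for all $n\in\cN^{(e)}$; when $M(n)\le m$ it simply says the class is zero. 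Your exceptional-case argument has the right shape but inherits the same gap.
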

The key construction relies on the following divisibility lemma.
\begin{lem}\label{lem:index}
    Let $\bullet\in \{\sharp,\flat\}$ and $n\in \cN$. Suppose $\bka^\bullet_n(\1) = 0$ in $H^1(K, T/I_n)$. Then for all $\alpha\equiv 1\bmod \varpi^m$ for some $m\in \Z_{>0}$, we have $\bka_n^\bullet(\alpha) \in \varpi^m H^1(K,T/I_n)$.
\end{lem}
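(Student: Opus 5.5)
The idea is to view $\bka_n^\bullet$ as a $\Lambda$-adic class in $H^1(K,\TT/I_n\TT)$ and to use that the specializations at $\1$ and at $\alpha$ are close in $\Lambda$. Concretely, $\pi_\alpha$ is reduction modulo $\fP_\alpha$ (through $\gamma\mapsto\alpha(\gamma)$), while $\pi_{\1}$ is reduction modulo $X=\gamma-1$. Since $\alpha\equiv 1\bmod\varpi^m$, we have $\alpha(\gamma)-1\in\varpi^m\cO$. Hence $\fP_\alpha=(X-(\alpha(\gamma)-1))$, and $\fP_\alpha+\varpi^m\Lambda=X\Lambda+\varpi^m\Lambda$. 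Both specializations therefore factor through the same quotient $\TT/(I_n,\varpi^m,\fP_\alpha)\TT=T/(I_n,\varpi^m)T$, and that is where they agree. Lemma \ref{lem:cong-class-alpha} already uses this observation.

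The plan is as follows. First, use the vanishing hypothesis $\bka_n^\bullet(\1)=0$ to write $\bka_n^\bullet=X\cdot y$ for some $y\in H^1(K,\TT/I_n\TT)$. This comes from the long exact sequence of $0\to\TT/I_n\xrightarrow{X}\TT/I_n\to T/I_n\to0$. The sequence is exact because $\TT/I_n\TT\cong (T/I_n)\otimes(\cO/I_n)[[X]]$ and $X$ is a nonzerodivisor on it. In that exact sequence, $\ker(\pi_{\1})=X\,H^1(K,\TT/I_n\TT)$. Second, apply $\pi_\alpha$, which is $\Lambda$-linear when $\Lambda$ acts on the target through $\gamma\mapsto\alpha(\gamma)$. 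This gives
\[
\bka_n^\bullet(\alpha)=\pi_\alpha(Xy)=(\alpha(\gamma)-1)\,\pi_\alpha(y)\in\varpi^m H^1(K,T_\alpha/I_nT_\alpha).
\]
Third, identify the target with $H^1(K,T/I_n)$. As $G_K$-modules, $T_\alpha/I_n T_\alpha$ and $T/I_nT$ agree only modulo $\varpi^m$, unless $M(n)\le m$. Where they do differ, the statement should be read in $H^1(K,T_\alpha/I_n)$, as the lemma's notation suggests. Alternatively, one can note that $\varpi^m$-divisibility in $H^1(K,T_\alpha/I_n)$ is what is needed later.

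The main obstacle is the first step. Its justification rests on two points, namely that $X$ is injective on $\TT/I_n\TT$ and that $\pi_{\1}$ really is the connecting map $H^1(K,\TT/I_n)\to H^1(K,T/I_n)$ induced by reduction modulo $X$. The key thing to verify is that the convention $\TT=T\otimes\Lambda^\iota$ does not change which ideal is the kernel of specialization at $\1$: it is still $(\gamma-1)$, since the inverse of $1$ is $1$. I also need to check that the $\alpha$-specialization uses $\fP_{\alpha}$ or $\fP_{\alpha^{-1}}$ consistently. Either way the relevant scalar $\alpha(\gamma)^{\pm1}-1$ lies in $\varpi^m\cO$, so the argument goes through in both cases. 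No Selmer conditions are involved, so the argument is purely cohomological.
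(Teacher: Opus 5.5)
Your proof is correct, and it takes a more direct route than the paper's.

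\textbf{The paper's route.} The paper works over $K[n]$ with $\tilde\bka_n^\bullet\in H^1(K[n],\TT)$. It writes $\tilde\bka_n^\bullet \bmod I_n=(\gamma-1)y$ in $H^1(K[n],\TT/I_n)$. It then shows that $y$ may be taken $\cG(n)$-invariant, using that $H^1(K[n],\TT/I_n)$ has no $(\gamma-1)$-torsion; this follows from Lemma~\ref{lem:T-no-invariant}. Finally it specializes at $\alpha$ and descends to $K$ through the restriction isomorphism of Corollary~\ref{cor:iso-rest}, which again rests on Lemma~\ref{lem:T-no-invariant}.

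\textbf{Your route.} You use the long exact sequence of $0\to\TT/I_n\xrightarrow{X}\TT/I_n\to T/I_n\to0$ directly over $K$. You apply it to $\bka_n^\bullet\in H^1(K,\TT/I_n\TT)$, which is already defined as a class over $K$. Exactness at the middle $H^1$ immediately gives $\bka_n^\bullet\in X\,H^1(K,\TT/I_n\TT)$. The $\Lambda$-linearity of $\pi_\alpha$ then finishes the argument.

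\textbf{Comparison.} Under Corollary~\ref{cor:iso-rest}, the paper's intermediate statement $\tilde\bka_n^\bullet\bmod I_n\in(\gamma-1)H^1(K[n],\TT/I_n)^{\cG(n)}$ is exactly your first step. So the detour through $K[n]$ and the torsion-freeness argument stays closer to how the classes are built, but gains nothing for this lemma. Your version uses no information about $H^0$ at primes above $p$. It also applies to any class in $H^1(K,\TT/I_n\TT)$ killed by $\pi_{\1}$.

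\textbf{Other points.} You are right that the conclusion must be read in $H^1(K,T_\alpha/I_nT_\alpha)$. That is what the paper's proof actually shows, and it is what Theorem~\ref{thm:kolyvagin} uses. You are also right that the $\fP_\alpha$ versus $\fP_{\alpha^{-1}}$ convention does not matter. One minor terminological slip: the map $H^1(K,\TT/I_n)\to H^1(K,T/I_n)$ is induced by the quotient map, not a connecting homomorphism.
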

\begin{proof}
    
	Consider $\tilde\bka^\bullet_n \in H^1(K[n],\TT)$ from \S\ref{subsec:signed-heegner}. Note the following chain
	\[
		\begin{tikzcd}
			H^1(K[n], \TT) \ar[r,"a"] & H^1(K[n],\TT/I_n) \ar[r,"b"] & H^1(K[n],T/I_n)\\
			\tilde\bka_n^\bullet \ar[r,mapsto] & \tilde\bka_n^\bullet \bmod I_n \ar[r,mapsto] & \tilde\bka_n^\bullet(\1) \bmod I_n
		\end{tikzcd}
	\]
	Note that by construction, $\tilde\bka_n^\bullet \bmod I_n$ is invariant under $\cG(n)$-action. Our assumption then says that $\tilde\bka_n^\bullet \bmod I_n$ is in $\ker(b)$, which is $(\gamma -1)H^1(K[n],\TT/I_n)$. We contend that
	\[
		(\gamma -1)H^1(K[n],\TT/I_n)\cap H^1(K[n],\TT/I_n)^{\cG(n)} = (\gamma-1) H^1(K[n],\TT/I_n)^{\cG(n)}.
	\]
	Indeed, suppose $x = (\gamma -1)y$ is in the left hand side for some $y\in H^1(K[n],\TT/I_n)$. Then for all $\sigma \in \cG(n)$, as $\sigma(\gamma-1)y = (\gamma-1)(\sigma y)$, we have $y^\sigma - y$ is killed by $\gamma -1$. On the other hand, we know that
	\[
		0\to \TT/I_n \xrightarrow{\times (\gamma-1)} \TT/I_n \to T/I_n \to 0
	\]
	is exact, since as an $\Lambda$-module, the first can be non-canonically identified with $(\Lambda^\iota/I_n)^{\rank_\cO T}$, which has no $(\gamma -1)$-torsion. We have thus a surjection derived from taking the group cohomology:
	\[
		(T/I_n)^{G_{K[n]}}\to H^1(K[n],\TT/I_n)[\gamma -1]\to 0.
	\]
	The first item is zero by Lemma \ref{lem:T-no-invariant}, so $H^1(K[n],\TT/I_n)$ has no $(\gamma-1)$-torsion, whereby $y$ is invariant under $\cG(n)$. This proves the contention.
	
	Consider now the $\alpha$-specialization
	\[
	\begin{tikzcd}
		H^1(K[n],\TT) \ar[r,"a"] & H^1(K[n],\TT/I_n) \ar[r,"b_\alpha"] & H^1(K[n],T_\alpha/I_n)\\
		\tilde\bka_n^\bullet \ar[r,mapsto] & \tilde\bka_n^\bullet \bmod I_n \ar[r,mapsto] & \tilde\bka_n^\bullet(\alpha) \bmod I_n
	\end{tikzcd}
	\]
	As $\tilde\bka_n^\bullet\bmod I_n \in (\gamma-1)\cdot H^1(K[n],\TT/I_n)^{\cG(n)}$, we see that
    \[
    \tilde\bka_n^\bullet(\alpha) \bmod I_n \in (\alpha(\gamma)-1)H^1(K[n],T_\alpha/I_n)^{\cG(n)}.
    \]
    An argument similar to Corollary \ref{cor:iso-rest} then shows $\bka_n^\bullet(\alpha) \in \varpi^m H^1(K,T_\alpha/I_n)$.
\end{proof}
\begin{proof}[Proof of Theorem \ref{thm:kolyvagin}]
    We first deal with the non-exceptional case, and suppose, for the sake of contradiction, that $\kappa_n^\Heeg=0$ for all $n\in \cN^{(e)}$. 

    For all $n\in \cN^{(e)}$, applying Lemma \ref{lem:cong-class-alpha} with the trivial character $\1$ and $\varpi^m\cO = I_n\cO$, we find $\bka_n^\bullet(\1) = 0$. Fix $m\gg 0$ and $\alpha\equiv 1\bmod \varpi^m$. For all $n\in \cN^{(e)}$, by Lemma \ref{lem:index}, we conclude that $\bka_n^\bullet(\alpha)\in \varpi^m H^1(K,T_\alpha/I_n)$. As we have assumed that $\bka_1^\bullet$ is not $\Lambda$-torsion, there are only finitely many characters $\alpha$ for which $\bka_1^\bullet(\alpha) = 0$. Avoiding these characters, we can choose $\breve \kappa_n^\bullet \in H^1(K,T_\alpha/I_n)$ such that $\varpi^m \breve \kappa_n^\bullet = \bka_n^\bullet(\alpha)$ for all $n\in \cN^{(e)}$, and $\breve \kappa_1^\bullet \ne 0$. So, by Proposition \ref{prop:kappa-kolyvagin}, we obtain a system $\{\breve \kappa_n^\bullet\}_{n\in \cN^{(e)}}$ for which Theorem \ref{thm:howard} applies. This gives 
		\[
		|\Sha_\bullet(K,A_\alpha)| \le |\cO/\varpi|^{2\ind(\breve\kappa_1^\bullet)} = |\cO/\varpi|^{2\ind(\bka_1^\bullet(\alpha))-2m}.
		\]
		By Proposition \ref{prop:second-ineq}, we find
		\[
		|\Sha_\bullet(K,A_\alpha)| \le C|\cO/\varpi|^{-2m} |\cO/\varpi|^\mu |\Sha_\bullet(K,A_\alpha)| \prod_{w\mid N^+}c_w^{(p)},
		\]
		so
		\[
		2m \le C' + \mu + f(F/\Q_p)^{-1}\sum_{p\mid N^+}\ord_p(c_w^{(p)})
		\]
		for some constant $C'$ independent of $m$ and $f(F/\Q_p)$ the inertia degree of $F/\Q_p$. Since this holds for arbitrarily large $m$, we derive a contradiction.
		
		Concerning the exceptional scenario, note that by Remark \ref{rem:kappa-sharp}, we have $\bka_n^\sharp(\1)= 0$ for all $n\in \cN$. We can thus construct a system $\breve\kappa_n^{\sharp}$ in the above manner. The inclusion \eqref{hyp:weak-IMC} would give us the same contradiction as above, and hence it cannot be valid.
\end{proof}

\bibliographystyle{amsalpha}
\bibliography{references}
\end{document}